\def\ds{\displaystyle}
\def\2{C^{1,2}(\R\times\R^N)}
\def\to{\rightarrow}
\def\e{\varepsilon}
\def\R{\mathbb{R}}
\def\N{\mathbb{N}}
\def\tilde{\widetilde}
\def\.{\cdot}
\def\epsilon{\varepsilon}
\newlength{\textlarg}
\newcommand{\be}{\begin{equation}}
\newcommand{\ee}{\end{equation}}
\newcommand{\baa}{\begin{array}}
\newcommand{\eaa}{\end{array}}
\newcommand{\ba}{\begin{eqnarray}}
\newcommand{\ea}{\end{eqnarray}}
\newtheorem{theorem}{Theorem}[section]
\newtheorem{assumption}[theorem]{Assumption}
\newtheorem{claim}[theorem]{Claim}
\newtheorem{definition}[theorem]{Definition}
\newtheorem{proposition}[theorem]{Proposition}
\newtheorem{remark}[theorem]{Remark}
\newenvironment{proof}[1][Proof]{\noindent\textit{#1.} }{\hfill \rule{0.5em}{0.5em}}
\def\Fi#1{\begin{formula}{#1}}
\def\Ff{\end{formula}\noindent}
\begin{document}
\author{Thomas Giletti$^{\hbox{\small{ a}}}$, Fran{\c{c}}ois Hamel$^{\hbox{\small{ b}}}$\\
\\
\footnotesize{$^{\hbox{a }}$Universit\'e de Lorraine, Institut \'Elie Cartan de Lorraine, UMR 7502}\\
\footnotesize{54506 Vandoeuvre-l\`es-Nancy Cedex, France}\\
\footnotesize{$^{\hbox{b }}$Aix Marseille Universit\'e, CNRS, Centrale Marseille}\\
\footnotesize{Institut de Math\'ematiques de Marseille, UMR 7373, 13453 Marseille, France}}
\date{}
\title{\bf{Sharp thresholds between finite spread and uniform convergence for a reaction-diffusion equation with oscillating initial data}\thanks{This work has been carried out in the framework of Archim\`ede LabEx (ANR-11-LABX-0033) and of the A*MIDEX project (ANR-11-IDEX-0001-02), funded by the ``Investissements d'Avenir" French Government program managed by the French National Research Agency (ANR). The research leading to these results has received funding from the European Research Council under the European Union's Seventh Framework Programme (FP/2007-2013) / ERC Grant Agreement n.321186 - ReaDi - Reaction-Diffusion Equations, Propagation and Modelling, and from the ANR project NONLOCAL (ANR-14-CE25-0013).}}

\maketitle

\begin{abstract}
We investigate the large-time dynamics of solutions of multi-dimensional reaction-diffusion equations with ignition type nonlinearities. We consider solutions which are in some sense locally persistent at large time and initial data which asymptotically oscillate around the ignition threshold. We show that, as time goes to infinity, any solution either converges uniformly in space to a constant state, or spreads with a finite speed uniformly in all directions. Furthermore, the transition between these two behaviors is sharp with respect to the period vector of the asymptotic profile of the initial data. We also show the convergence to planar fronts when the initial data are asymptotically periodic in one direction.
\end{abstract}


\section{Introduction}

In this paper, we study the large-time behavior of solutions of the Cauchy problem for the reaction-diffusion equation in the multi-dimensional space $\R^N$:
\begin{equation}\label{eqn:RD}
\left\{
\begin{array}{l}
\partial_t u = \Delta u + f(u), \; t>0 \mbox{ and } x \in \R^N, \vspace{3pt}\\
u (t=0 ,x) = u_0 (x)  \in L^\infty (\R^N,[0,1]),
\end{array}
\right.
\end{equation}
for some classes of oscillating initial data that we will make more precise below, see Assumptions~\ref{ass:ini1} and~\ref{ass:ini2} below.

We assume throughout the paper that $f : [0,1] \to \R $ is Lipschitz-continuous and satisfies:
\begin{equation}\label{ass_f}
\left\{
\begin{array}{l}
\exists\,\theta \in (0,1), \ f(u) = 0 \ \mbox{ in } [0,\theta] \cup \{1\}, \ f(u) >0 \mbox{ in } (\theta,1), \vspace{3pt}\\
\exists\,\rho \in (0, 1 -\theta) , \ \mbox{the restrictions of } f \mbox{ to } [\theta,\theta+\rho] \mbox{ and to } [1-\rho,1] \mbox{ are } C^1, \vspace{3pt}\\
f \mbox{ is nondecreasing on }  [\theta, \theta +\rho]\mbox{ and }f'(1) < 0.
\end{array}
\right.
\end{equation}
Such nonlinearities are commonly refered to as combustion nonlinearities, due to the fact that they quite naturally arise in combustion models, when looking at the propagation of a flame. In such a framework, the quantity~$u$ represents the temperature, and the real number~$\theta$ is the ignition threshold: reaction occurs only when the temperature is higher than $\theta$. We note immediately that, due to the hypothesis~\eqref{ass_f} and the maximum principle, the solutions $u$ of~\eqref{eqn:RD} are well defined and classical in~$(0,+\infty)\times\R^N$, and they satisfy $0\leq u(t,x) \leq 1$ for all $t >0$ and $x \in \R^N$.

A considerable amount of work in the literature has been devoted to the study of propagation phenomena which arise in such equations, provided that $u_0$ is both localized and large enough to activate the reaction. A simple but meaningful way to investigate propagation dynamics is to look at planar traveling fronts connecting two stationary states, namely particular solutions of the type
$$u (t,x) = U (x \cdot e - ct),$$
where~$c$ is the speed of the front and $e$ belonging to the unit sphere $S^{N-1}$ is its direction, together with the following conditions:
$$\alpha < U (\cdot) < 1 \mbox{ in } \R, \; U (+\infty) = \alpha,\  U (-\infty) = 1,\ \alpha\in[0,\theta].$$
Finding a traveling front is clearly equivalent to solve
\begin{equation}\label{eqn:front_eq}
U'' + c U' + f(U) = 0 \ \mbox{ in } \R,\ \ U (+\infty) = \alpha < U (\cdot) <  U (-\infty)=1.
\end{equation}
It is well-known that, for any $\alpha \in [0,\theta)$, the solution $(c,U)$ exists and is unique, up to shifts of~$U$~\cite{AW,Fife} (note that it does not depend on the direction of propagation~$e$). From now on, we will refer to this unique traveling front as $(c^* (\alpha) ,U_\alpha)$. Furthermore, the speed $c^*(\alpha)$ is positive. The case $\alpha = \theta$ is very different, because it is an unstable equilibrium from above. Indeed, the restriction of $f$ to $(\theta,1)$ is positive and of the monostable type. The traveling front is then no longer unique, as there is instead some $c^* (\theta)>0$ and a continuum of admissible speeds $[c^* (\theta ) ,+\infty)$ for which traveling fronts exist~\cite{AW,KPP}. However, for a given speed $c \geq c^* (\theta)$, the profile $U$ of the front connecting $\theta$ to 1 is unique up to shifts. It is also known that the mapping $\alpha \in [0,\theta] \mapsto c^* (\alpha)$ we just defined is continuous and increasing, as for instance follows from the construction of traveling fronts in~\cite{AW,BN}.

Furthermore, those traveling fronts turn out to be attractive for the Cauchy problem~\eqref{eqn:RD}. On the one hand, given $\eta \in (0,1-\theta)$, if~$u_0$ lies above $\theta + \eta > \theta$ on a large enough set and $ u_0 (x) \to \alpha \in [0,\theta)$ as $\|x\| \to +\infty$,\footnote{Throughout the paper, we denote $\|x\|= \sqrt{x_1^2 + ... + x_N^2}$ the Euclidean norm of $x \in \R^N$.} then $u(t,x)$ spreads from $\alpha$ to 1 with the speed~$c^* (\alpha)$ uniformly in all directions (the precise meaning will be given in Definition~\ref{def:spread} below), as easily follows from~\cite{AW} and the maximum principle. It is even known, at least in one dimensional domains, that the profile of the solution also converges along the propagation to the front~$U_\alpha$~\cite{Kanel'1,Kanel'2}. On the other hand, because they are not unique, stability of monostable traveling fronts is slightly weaker, and only holds with respect to fastly decaying initial data. We refer the reader to~\cite{AW} for spreading speeds, \cite{bramson,HNRR,Kametaka,Lau85} for the profile's convergence in the so-called KPP case, and~\cite{sattinger,uchiyama} for the more general monostable case.

The goal of this work is to study spreading dynamics for some more complex initial data. Our first assumption is the following:

\begin{assumption}\label{ass:ini1}
The solution of~\eqref{eqn:RD} is such that
\begin{equation}\label{eqn:spread}
u(t,x) \to 1 \mbox{ locally uniformly in } x \in \R^N \mbox{ as } t \to +\infty.
\end{equation}
\end{assumption}

As explained above, Assumption~\ref{eqn:RD} is known to hold when $\eta \in (0, 1-\theta)$ is given and $u_0 \geq \theta +\eta$ on a large enough set, see~\cite{AW}. Together with the nonnegativity of $u_0$, this actually implies that the solution spreads at least with speed $c^* (0)$ in all directions. Another typical asymptotic behavior is the convergence as $t \to +\infty$ locally uniformly in~$x$ to a stationary state $\alpha \in [0,\theta]$.\footnote{Nevertheless, in general, the solutions $u$ may not converge to a steady state (explicit examples can be found in~\cite{py1,py2} with, say, $u_0\ge\theta$, $\lim_{\|x\|\to+\infty}u_0(x)=\theta$ and $f(s)$ of the type $(s-\theta)^p$ in a right neighborhood of $\theta$, for some large~$p$ and~$N$).} It is actually known, in dimension~$1$, that a sharp threshold can be observed between those two outcomes when one considers a monotone family of compactly supported~\cite{DM,Z} or $L^2$ \cite{MZ} initial data (see also~\cite{Polacik} for related results in higher dimension for bistable type nonlinearities). However, when convergence to some $\alpha \in [0,\theta]$ occurs, the solution does not exhibit front-like behavior, so that we only restrict ourselves to the spreading case and assume~\eqref{eqn:spread}.

While most of the literature dealt with initial data that converge to some stationary state $\alpha\in[0,\theta]$ far from the origin, it is also reasonable to look at a more general situation. The initial condition $u_0$ itself may indeed be given as the result of an earlier process and may therefore involve some strong heterogeneities at infinity. With this in mind, our main assumption will be that~$u_0$ behaves as a nontrivial periodic function as $\|x\| \to +\infty$, see Assumption~\ref{ass:ini2} below. To state it, we say that a function~$v_0 \in L^\infty ( \R^N,[0,1] )$ is $(1,...,1)$-periodic if it satisfies
$$\forall\, (k_1,...,k_N) \in \mathbb{Z}^N , \; \forall\, x=(x_1,...x_N) \in \R^N, \; v_0 (x_1 +k_1,...,x_N +k_N)= v_0 (x_1,...,x_N).$$
For any such function, we also define its average as
$$\overline{v_0} := \int_{[0,1]^N} v_0 (x)\, dx.$$
Our second assumption on $u_0$ then reads as follows:

\begin{assumption}\label{ass:ini2}
The initial condition $u_0\in L^{\infty}(\R^N,[0,1])$ converges to a periodic function as~$\|x\| \to +\infty$ in the following sense:
\begin{equation}\label{eqn:period}
\exists\,v_0 \  (1,...,1)\mbox{-periodic} , \; \exists\, L_1, ...,L_N >0 , \ \  u_0 (x) - v_{0} \Big(\frac{x_1}{L_1},...,\frac{x_N}{L_N}  \Big) \rightarrow 0 \mbox{ as } \|x\| \rightarrow +\infty.
\end{equation}
For convenience, we will denote by $L = (L_1,...,L_N)= \| L\|\,\xi_L$ the period vector and by $\xi_L =L/\|L\|$ the normalized period vector, which belongs to $S_+^{N-1}$, the part of the unit sphere with $($strictly$)$ positive coordinates.
\end{assumption}

As there exist traveling fronts $U_\alpha$ for problem~\eqref{eqn:front_eq} for any $\alpha \in [0,\theta)$ and a family of traveling fronts parametrized by $c\in[c^*(\theta),+\infty)$ when $\alpha=\theta$, it is not clear a priori what dynamics the initial data $u_0$ satisfying Assumptions~\ref{ass:ini1} and~\ref{ass:ini2} will produce. In~\cite{HS}, the case $\mathrm{ess } \sup_{\R} v_0 < \theta$ was investigated in dimension~1. Because reaction does not occur ahead of the propagation (where $u$ is below $\theta$), it was revealed that the solution spreads at the unique speed $c^* (\overline{v_0})$ of the traveling front connecting $\overline{v_0}$ to~$1$. On the other hand, it follows easily from the properties of~$f$ and the maximum principle that, if~$u_0$ satisfies~\eqref{eqn:period} in $\R^N$ and $\mathrm{ess} \inf_{\R^N} v_0 > \theta$, then the solution $u$ converges to~1 uniformly in $\R^N$ as $t \to +\infty$. Such observations can easily be extended to the cases~$\mathrm{ess} \sup_{\R^N} v_0 \leq \theta$ and $\mathrm{ess} \inf_{\R^N} v_0 \geq \theta$, provided that $v_0 \not \equiv \theta$ (by using the strong maximum principle and looking at~\eqref{eqn:RD} with initial data~$u(\varepsilon,\cdot)$ for some small $\varepsilon >0$). 

Therefore, our goal in this work is to deal with the case of a $(1,...,1)$-periodic function~$v_0$ oscillating around the ignition threshold~$\theta$:
\begin{equation}\label{eqn:cond_v0}
0 \leq \mathop{\mathrm{ess } \inf}_{\R^N} v_0 < \theta < \mathop{\mathrm{ess } \sup}_{\R^N} v_0 \leq 1.
\end{equation}
Unlike in~\cite{HS}, a non trivial interaction occurs ahead of the propagation between reaction and diffusion processes (at least in some finite time interval $(0,t_0)$ with $t_0>0$), which may trigger various large-time dynamics.

\begin{remark}{\rm
Note that Assumption~$\ref{ass:ini2}$ is clearly equivalent to the convergence of the initial condition~$u_0$ to a $(L_1,...,L_N)$-periodic function as $\| x \| \to +\infty$. However, we chose to state Assumption~$\ref{ass:ini2}$ by using~\eqref{eqn:period} in order to highlight in the following results the influence not only of the shape of the periodic function $v_0$, but also of the period vector $L$ and in particular its Euclidean norm~$\|L\|$.}
\end{remark}

Our first main theorem will deal, in any multi-dimensional space, with the spreading properties of~$u$. Let us define beforehand more rigorously the notion of spreading.

\begin{definition}\label{def:spread}
We say that a solution~$u$ of~\eqref{eqn:RD} spreads from $\alpha \in [0,\theta]$ to $1$ with speed~$c^*>0$ if
\begin{equation}
\left\{
\begin{array}{ll}
\forall \, 0 \leq  c < c^*, & \displaystyle \lim_{t \rightarrow +\infty} \sup_{\|x\| \leq ct} |u(t,x) -1| = 0,\vspace{3pt}\\
\forall \, c>c^*, & \displaystyle \lim_{t \rightarrow +\infty} \sup_{\|x\| \geq ct} |u (t,x) - \alpha | =0.
\end{array}
\right.
\end{equation}
\end{definition}

Because the large-time and large-space dynamics will a priori depend on the initial data, we do not impose in this definition what the invaded equilibrium state $\alpha$ is. In our first main result below, the existence of~$\alpha$ such that the solution spreads from~$\alpha$ to~1 will be guaranteed and its value will be expressed in terms of $v_0$ and the period vector $L$.

\begin{theorem}\label{th:spreading_ignition}
Let $v_0 : \R^N \to [0,1]$ be a given $(1,...,1)$-periodic function satisfying~\eqref{eqn:cond_v0} and let~$u_0\in L^{\infty}(\R^N,[0,1])$ satisfy Assumptions~$\ref{ass:ini1}$ and~$\ref{ass:ini2}$.\par
{\rm{(a)}} If $\overline{v_0} \geq \theta$, then the solution $u$ of~\eqref{eqn:RD} converges to~$1$ uniformly in $\R^N$ as $t \to +\infty$.\par
{\rm{(b)}} If $\overline{v_0} < \theta$, then there exists a continuous map 
$$\begin{array}{rcl}
S_+^{N-1} & \to &  (0,+\infty) \vspace{3pt} \\
\xi & \mapsto & L^* (\xi)
\end{array}$$
such that $\inf_{\xi \in S_+^{N-1}} L^* (\xi) >0$ and, for any $\xi \in S_+^{N-1}$, there is a continuous and increasing map
$$\begin{array}{rcl}
(0, L^* (\xi)] & \to &  (\overline{v_0},\theta] \vspace{3pt} \\
\lambda & \mapsto & \theta_{\xi,\lambda}
\end{array}$$
such that $\theta_{\xi,L^* (\xi)} = \theta$ and $\lim_{\lambda \to 0^+} \theta_{\xi,\lambda} =\overline{v_0} >0$. Moreover, the solution~$u$ of~\eqref{eqn:RD} satisfies the following spreading properties:
\begin{itemize}
\item[$(i)$] if $0 < \|L\| < L^* (\xi_L)$, then $u$ spreads from $\theta_{\xi_L,\|L\|} \in (\overline{v_0},\theta)$ to $1$ with speed $c^* (\theta_{\xi_L,\|L\|})>0$ in the sense of Definition~$\ref{def:spread}$;
\item[$(ii)$] if $\| L\| >L^* (\xi_L)$, then $u$ converges to $1$ uniformly in $\R^N$ as $t \to +\infty$.
\end{itemize}
\end{theorem}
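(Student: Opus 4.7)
The strategy is to reduce the study of $u$ to that of an auxiliary \emph{periodic} Cauchy problem. For each $\xi\in S_+^{N-1}$ and $\lambda>0$, let $w^{\xi,\lambda}$ denote the solution of $\partial_t w=\Delta w+f(w)$ on $\R^N$ with initial datum $v_0(x_1/(\lambda\xi_1),\ldots,x_N/(\lambda\xi_N))$. By uniqueness, $w^{\xi,\lambda}(t,\cdot)$ remains periodic in each $x_i$ with period $\lambda\xi_i$. I first claim that $w^{\xi,\lambda}(t,\cdot)$ converges uniformly in $x$ as $t\to+\infty$ to a constant $\theta_{\xi,\lambda}\in[0,\theta]\cup\{1\}$. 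Indeed, any periodic stationary solution $W$ satisfies $-\Delta W=f(W)\geq 0$, so integrating over a period cell $Q$ forces $f(W)\equiv 0$; hence $W\in[0,\theta]\cup\{1\}$ everywhere, and then $\Delta W\equiv 0$ makes $W$ constant. The $\omega$-limit of $w^{\xi,\lambda}$ is therefore a connected subset of the disconnected set $[0,\theta]\cup\{1\}$, hence a singleton. Moreover, since $v_0>\theta$ on a set of positive measure, $t\mapsto\int_Q w^{\xi,\lambda}(t,\cdot)\,dx$ is nondecreasing and strictly increasing near $t=0^+$, which gives $\theta_{\xi,\lambda}>\overline{v_0}$.

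Part (a) then follows quickly: if $\overline{v_0}\geq\theta$, the strict increase at $t=0^+$ of the cell average yields $\theta_{\xi,\lambda}>\theta$, forcing $\theta_{\xi,\lambda}=1$. To transfer this to $u$, I combine the fact that $u_0(\cdot)-v_0(\cdot_1/L_1,\ldots,\cdot_N/L_N)\to 0$ at infinity (which by a translation argument and parabolic estimates yields $u(t,x)-w^{\xi_L,\|L\|}(t,x)\to 0$ as $\|x\|\to+\infty$, uniformly on bounded time intervals) with Assumption~\ref{ass:ini1} (local convergence $u\to 1$); an elementary sandwich argument upgrades these to uniform convergence of $u$ to $1$ on $\R^N$.

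For part (b), I study the map $\lambda\mapsto\theta_{\xi,\lambda}$ via the rescaling $y=x/\lambda$, which transforms the auxiliary problem into $\partial_\tau\tilde w=\sum_i\xi_i^{-2}\partial_{y_i}^2\tilde w+\lambda^2 f(\tilde w)$. For $\lambda\to 0^+$, reaction is a vanishing perturbation of a pure heat flow that homogenizes $\tilde w$ to $\overline{v_0}<\theta$, yielding $\theta_{\xi,\lambda}\to\overline{v_0}$; in particular $L^*(\xi)$ will be bounded away from $0$. For $\lambda$ large, the super-level set $\{v_0>\theta\}$ after rescaling contains arbitrarily large balls, on which a compactly supported ignition subsolution of the type used in~\cite{AW} triggers invasion by $1$, forcing $\theta_{\xi,\lambda}=1$. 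Setting $L^*(\xi):=\sup\{\lambda>0:\theta_{\xi,\lambda}\leq\theta\}\in(0,+\infty)$, the monotonicity of $\lambda\mapsto\theta_{\xi,\lambda}$ follows from a comparison between the rescaled problems at nearby values of $\lambda$; continuity in $(\xi,\lambda)$ comes from parabolic compactness, and $\theta_{\xi,L^*(\xi)}=\theta$ from passing to the limit along the threshold.

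Finally, the spreading statements of~(b) rely on the same approximation $u\approx w^{\xi_L,\|L\|}$ far from the origin. When $\|L\|>L^*(\xi_L)$, $w\to 1$ uniformly and Assumption~\ref{ass:ini1} gives uniform convergence of $u$ to $1$, proving~(ii). When $\|L\|<L^*(\xi_L)$, set $\theta^*:=\theta_{\xi_L,\|L\|}\in(\overline{v_0},\theta)$: for large $t$, $w(t,\cdot)\leq\theta^*+\varepsilon<\theta$ uniformly, so $u(t,x)\leq\theta^*+2\varepsilon$ for $\|x\|$ large, while Assumption~\ref{ass:ini1} supplies $u(T_0,\cdot)\geq\theta+\eta$ on a large ball. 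Sandwiching $u$ between radially symmetric sub- and super-solutions built from the traveling fronts $U_{\theta^*\pm\varepsilon}$ and using continuity of $\alpha\mapsto c^*(\alpha)$ yields spreading at the exact speed $c^*(\theta^*)$ in the sense of Definition~\ref{def:spread}. The main obstacle is this last step: the approximation $u\approx w^{\xi_L,\|L\|}$ is valid only outside large balls, whereas the inner spreading bound requires uniform lower bounds at all scales, and pinning down the exact speed $c^*(\theta^*)$ rather than a neighboring value requires a careful diagonal extraction as $\varepsilon\to 0^+$ together with the full power of the known stability theory for ignition fronts.
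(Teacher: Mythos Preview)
Your overall strategy coincides with the paper's: analyze the periodic auxiliary problem $w^{\xi,\lambda}$ (called $v_{\xi,\lambda}$ there), rescale to obtain the equation $\partial_\tau\tilde w=\sum_i\xi_i^{-2}\partial_{y_i}^2\tilde w+\lambda^2 f(\tilde w)$ and deduce monotonicity in $\lambda$, and then transfer the large-time limit to $u$ via the approximation $u\approx w^{\xi_L,\|L\|}$ outside large balls, finally sandwiching $u$ between solutions emanating from step data $\chi_{B_R}+(\theta^*\pm\varepsilon)\chi_{\R^N\setminus B_R}$ and invoking~\cite{AW} together with continuity of $\alpha\mapsto c^*(\alpha)$.

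There is, however, a gap in your convergence step for $w^{\xi,\lambda}$. First, you have not shown that the $\omega$-limit set consists only of equilibria: classifying the periodic \emph{stationary} solutions as constants in $[0,\theta]\cup\{1\}$ does not by itself force every $\omega$-limit point to be stationary. The paper handles this with the standard energy Lyapunov functional $G[v]=\int_Q\big(\tfrac12|\nabla v|^2-F(v)\big)\,dx$, whose monotonicity along the flow yields $\|\partial_t w(t,\cdot)\|_{L^2(Q)}\to 0$ and hence that every subsequential limit is stationary. Second, your sentence ``a connected subset of the disconnected set $[0,\theta]\cup\{1\}$, hence a singleton'' is incorrect as stated: $[0,\theta]$ is itself connected, so connectedness alone does not rule out an interval of constants in the $\omega$-limit. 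The paper closes this as follows: if some subsequential limit lies in $[0,\theta)$, then at some finite time $T$ one has $w^{\xi,\lambda}(T,\cdot)\leq\theta-\varepsilon$ uniformly, and comparison with the constants $\theta_{\xi,\lambda}\pm\varepsilon$ (which are solutions since $f\equiv 0$ on $[0,\theta]$) traps $w^{\xi,\lambda}$ near a single value for all later times. With these two fixes your plan goes through, and the remaining steps---strict monotonicity of $\lambda\mapsto\theta_{\xi,\lambda}$, continuity, $\theta_{\xi,L^*(\xi)}=\theta$, and the spreading sandwich---are carried out in the paper along the same lines you outline.
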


The key point of the proof, when $\overline{v_0}<\theta$ and $\lambda>0$ is not too large, is that the real number $\theta_{\xi,\lambda}$ will be defined as the uniform limit of $v_{\xi,\lambda} (t,x)$ as $t \to +\infty$, where~$v_{\xi,\lambda}$ is the $(L_1,...,L_N)$-periodic solution of~\eqref{eqn:RD} with initial data $v_0 (x_1/L_1 ,... ,x_N /L_N)$ and $(L_1 ,..., L_N) = \lambda\,\xi$.  Since~$u$ will become close to $\theta_{\xi,\lambda}$ at large time and large $\|x\|$, we indeed expect that the large-time dynamics of~$u$ should be governed by the front connecting $\theta_{\xi,\lambda}$ to $1$, at least when~$\theta_{\xi,\lambda} < \theta$. In the proof, these real numbers~$\theta_{\xi,\lambda}$ will actually be defined for all $(\xi,\lambda)\in S^{N-1}_+\times(0,+\infty)$ and also if $\overline{v_0}\ge\theta$, and they will actually be equal to $1$ if~$\overline{v_0}\ge\theta$ or if $\overline{v_0}<\theta$ and $\lambda>0$ is large enough (see Theorem~\ref{th:periodic_pb} below). 

In some sense, Theorem~\ref{th:spreading_ignition} is in line with some previous works~\cite{DM,MZ,Polacik,Z}, as it further highlights the existence of sharp thresholds between various typical behaviors in the large-time dynamics of solutions of the reaction-diffusion equation~\eqref{eqn:RD}. In the aforementioned works, monotonically increasing families of compactly supported (or decaying to~$0$ at infinity) initial data were typically considered, leading to ordered families of solutions. In the present paper, the threshold-type result arises from the influence of the asymptotic period vector of the initial oscillations on the large-time dynamics of solutions. Here, when $\xi_L\in S^{N-1}_+$ is fixed and $\|L\|$ varies in $(0,+\infty)$, the initial conditions~$u_0$ satisfying~\eqref{eqn:period} and~\eqref{eqn:cond_v0} cannot be ordered. However, up to a scaling (see~\eqref{scaling} below), the periodic solutions of~\eqref{eqn:RD} with initial conditions $v_0(x_1/L_1,...,x_N/L_N)$ can be compared, due to the nonnegativity of the function~$f$. We also point out that Theorem~\ref{th:spreading_ignition} differentiates between spreading with finite speed or uniform convergence to~$1$, rather than between spreading or extinction (that is, uniform convergence to~$0$) as in~\cite{DM,MZ,Polacik,Z}.

It is important to note that, for a given periodic function~$v_0$ and a given normalized period vector~$\xi_L$, the spreading speed of the solution~$u$ is uniform in all directions when $\|L\| < L^* (\xi_L)$, and the convergence to 1 as $t\to +\infty$ is also uniform in $\R^N$ when $\| L\| > L^* (\xi_L)$. However, the critical magnitude $L^* (\xi)$ of the period vector depends on the normalized period vector~$\xi$, that is, in some sense, on the ratios between the components of the period vector. This is due to the lack of symmetry, in general, of~$v_0$. 

For instance, choose $N=2$ and $v_0\in L^{\infty}(\R^2,[0,1])$ such that, for any $(x_1,x_2) \in [0,1]\times[0,1]$,
\begin{equation}\label{defv0x12}
v_0 (x_1,x_2) = \frac{1 +\theta}{2} \chi_{[0,1/3 )} (x_1) + \frac{\theta}{2} \chi_{[1/3,1)} (x_1),\footnote{Throughout the paper, $\chi_E$ is the generic notation for the characteristic function of a set $E$.}
\end{equation}
with $\theta > 1/3$, so that $\overline{v_0} = (1+3\theta)/6< \theta$. Then one can check, by using homogenization techniques as~$\xi \to (0,1)$ or $(1,0)$, that $L^* (\xi)$ is not constant with respect to~$\xi \in S_+^{1}$ (see Proposition~\ref{prop:hom} below). More precisely, $L^* (\xi)$ is finite for all $\xi \in S_+^1$, while $L^* (\xi) \to +\infty$ as $\xi \to (0,1)$ with~$\xi \in S_+^{1}$ due to the fact that for any $x_2$ the average of~$v_0$ in the $x_1$-direction lies below the ignition threshold~$\theta$. Hence, under the assumptions of Theorem~\ref{th:spreading_ignition} for problem~\eqref{eqn:RD} in $\R^N$ with $\overline{v_0} < \theta$, one has~$\inf_{\xi \in S_+^{N-1}} L^* (\xi)>0$ while the quantity $\sup_{\xi \in S_+^{N-1}} L^* (\xi)$ is not finite in general. Coming back to~\eqref{defv0x12} with $N=2$, it immediately follows that, when $\lambda > \inf_{\xi \in S_+^1}  L^* (\xi)$, there exist $\xi_1$ and~$\xi_2$ in~$S_+^1$ such that $L^* (\xi_2) < \lambda < L^* (\xi_1)$ and thus~$u$ spreads from $\theta_{\xi_1 , \lambda}\in(0,\theta)$ to 1 with finite speed~$c^* (\theta_{\xi_1,\lambda})$ when $L=\lambda \xi_1 $, while $u$ converges uniformly to~1 as $t\to+\infty$ when $L = \lambda \xi_2$. Furthermore, with the same notations, $\overline{v_0} < \theta_{\xi_1,\lambda} < \theta$ and there exists $\lambda ' \in (0, L^* (\xi_2))$ such that $\theta_{\xi_2,\lambda'} = \theta_{\xi_1 ,\lambda}$, hence~$\theta_{\xi_2 ,\lambda'} \neq \theta_{\xi_1 ,\lambda'}$ since $0<\lambda'<L^*(\xi_2)<\lambda<L^*(\xi_1)$ and $\overline{v_0}<\theta_{\xi_1,\lambda'}<\theta_{\xi_1,\lambda}<\theta$. In other words, if $L = \lambda ' \xi_1$, then the solution $u$ spreads from $\theta_{\xi_1,\lambda'}\in(0,\theta)$ to~$1$ with the speed $c^* (\theta_{\xi_1,\lambda'})$ while if $L=\lambda ' \xi_2$, then the solution $u$ spreads from $\theta_{\xi_2 ,\lambda'}=\theta_{\xi_1,\lambda}\in(\theta_{\xi_1,\lambda'},\theta)$ to~$1$ with the larger speed $c^* (\theta_{\xi_2,\lambda'})=c^*(\theta_{\xi_1,\lambda})>c^* (\theta_{\xi_1,\lambda'})$, even though $\| \lambda ' \xi_1 \| = \| \lambda ' \xi_2 \| =\lambda'$.

\begin{remark}{\rm
Using the same arguments as in our paper, one may consider initial data $u_0$ that are asymptotically periodic, but not in the usual coordinates. More precisely let $v_0$ be a $(1,...,1)$-periodic function, choose some basis $(\mathrm{e}_1,..., \mathrm{e}_N)$ of $\R^N$ and some positive real numbers $L_1,...,L_N$. Then consider the solution of~\eqref{eqn:RD} with initial datum $u_0$ satisfying Assumption~$\ref{ass:ini1}$ and 
$$u_0 (y_1 \mathrm{e}_1 +... + y_N \mathrm{e}_N) - v_0\Big(\frac{y_1}{L_1},...,\frac{y_N }{L_N}\Big) \to 0 \ \mbox{ as } \|x\| \to +\infty ,$$
where $x= y_1 \mathrm{e}_1 + ... + y_N \mathrm{e}_N$ denotes the decomposition of $x$ in the $(\mathrm{e}_1, ... , \mathrm{e}_N)$ basis. As in the proofs of Theorem~\ref{th:spreading_ignition} and~\ref{th:periodic_pb} below, one can show that the solutions $w_{\xi,\lambda}$ of
\begin{equation*}
\left\{
\begin{array}{l}
\partial_t w_{\xi,\lambda} = \Delta_x w_{\xi,\lambda} + f(w_{\xi,\lambda}),\ \ t>0,\ x\in\R^N,\vspace{3pt}\\
w_{\xi,\lambda} (0,x) = \ds v_0\Big(\frac{y_1}{L_1},...,\frac{y_N }{L_N}\Big),
\end{array}
\right.
\end{equation*}
with $\lambda\,\xi = (L_1, ....,L_N)$ still converge uniformly in $\R^N$ as $t\to +\infty$ to some constants $\sigma_{\xi,\lambda} \in [0,\theta] \cup \{1\}$ which depend monotonically on $\lambda$. Under condition~\eqref{eqn:cond_v0}, for every $\xi\in S^{N-1}_+$, there still exists some~$\tilde{L}^* (\xi) \in [0,+\infty)$ such that $\sigma_{\xi,\lambda} = 1 $ if and only if $\lambda > \tilde{L}^* (\xi)$ (by stating $\tilde{L}^*(\xi)=0$ if $\overline{v_0}\ge\theta$). One may then conclude in a similar fashion as in Theorem~$\ref{th:spreading_ignition}$ that, depending on $\overline{v_0}$ and $L$ $($at least when~$\|L\|  \neq \tilde{L}^* (\xi_L)$$)$, the solution $u$ of~\eqref{eqn:RD} may either converge uniformly to $1$ or spread with a finite speed in all directions in the sense of Definition~$\ref{def:spread}$.}
\end{remark}

In the critical case $\|L\|=L^* (\xi_L)$, one cannot expect to get a similar result as Theorem~\ref{th:spreading_ignition}, where the dynamics depends only on $v_0$ and $L$. Indeed, as $\theta_{\xi_L, L^* (\xi_L)} = \theta$, the situation is asymptotically similar to the monostable equation, that is to the case when $u_0 \geq v_0 \equiv \theta$. Moreover, even if~$\theta_{\xi_L,L^* (\xi_L)} = \theta$, it follows from the proof of Theorem~\ref{th:spreading_ignition} that the solution $u$ of~\eqref{eqn:RD} with initial condition $u_0$ satis\-fying Assumptions~\ref{ass:ini1} and~\ref{ass:ini2}, where $v_0$ satisfies~\eqref{eqn:cond_v0}, is never larger than $\theta$ uniformly in space, that is~$\inf_{\R^N} u(t,\cdot) < \theta$ for all $t \geq 0$. Furthermore, independently of these considerations, the traveling fronts connecting~$\theta$ to~$1$ are no longer unique up to shifts, and there instead exists a continuum of admissible speeds $[c^* (\theta),+\infty)$. In such a case, in dimension~1, if $u_0 \geq \theta$ and $u_0 (+ \infty)= \theta$, the rate of convergence of~$u_0$ to~$\theta$ is known to play a crucial role in the selection of a spreading speed. Roughly speaking, the solution $u(t,x)$ behaves at large time as a front in the half line $\{ x \geq 0\}$ if the initial datum has a similar exponential decay as $x \to +\infty$ \cite{bramson,Kametaka,sattinger,uchiyama}. It is even possible to construct some initial data such that the spreading speed is infinite even though $u$ does not converge uniformly to 1 as $t\to +\infty$~\cite{HR}, or such that the spreading speed varies through the process of propagation~\cite{HN,yanagida}. In our framework, one may expect even more complex dynamics because of the oscillations around the state~$\theta$. Therefore, we only restrict ourselves to some particular cases. In the following statement, we denote
$$f' (\theta^+) := \lim_{s \to 0,\,s>0} \frac{f(\theta+s)}{s},$$
which from our assumption~\eqref{ass_f} is a well-defined nonnegative real number. Furthermore, it is also well known~\cite{AW,hadeler} that the minimal speed $c^*(\theta)$ of  traveling fronts $U:\R\to[\theta,1]$ connecting $\theta$ to $1$ satisfies~$c^*(\theta)\ge2\sqrt{f'(\theta^+)}$ (this inequality roughly means that the minimal front speed of a monostable reaction-diffusion equation is always larger than the minimal speed of the linearized problem around the invaded unstable state).

\begin{theorem}\label{th:critical_ignition}
Let $v_0 : \R^N \to [0,1]$ be a $(1,...,1)$-periodic function satisfying~\eqref{eqn:cond_v0} and $\overline{v_0} < \theta$. Choose some $\xi \in S_+^{N-1}$ and $L=L^* (\xi)\,\xi$, and an initial condition $u_0$ satisfying Assumptions~$\ref{ass:ini1}$ and~$\ref{ass:ini2}$. Assume also that
\begin{equation}\label{ass_u0v0}
u_0 (x) - v_0\Big(\frac{x_1}{L_1},...,\frac{x_N}{L_N}\Big) \leq A\,e^{-\lambda_0 \|x\|}\ \hbox{ for all }x\in\R^N,
\end{equation}
for some $A >0$ and $\lambda_0 $ such that
\be\label{deflambda*}
\lambda_0>0\ \hbox{ and }\ \lambda_0 \geq \lambda^* := \frac{c^* (\theta) - \sqrt{c^*(\theta)^2 -  4f'(\theta^+)}}{2},
\ee
where $\lambda^* \geq 0$ is the smallest root of $\lambda^2 - c^* (\theta) \lambda + f'(\theta^+) =0$.\par
Then the solution~$u$ of~\eqref{eqn:RD} spreads from $\theta$ to $1$ with speed $c^* (\theta) >0$ in the sense of Definition~$\ref{def:spread}$.
\end{theorem}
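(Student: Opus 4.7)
My plan is to prove the two assertions in Definition~\ref{def:spread} (with invaded state $\theta$ and speed $c^*(\theta)$) separately, by sandwiching $u$ between the $(L_1,\dots,L_N)$-periodic solution $v_{\xi,L}$ of~\eqref{eqn:RD} starting from $v_0(x_1/L_1,\dots,x_N/L_N)$ and suitable moving perturbations. The critical hypothesis $\|L\|=L^*(\xi)$ gives, by Theorem~\ref{th:spreading_ignition} (and the definition of $\theta_{\xi,\|L\|}$), that $v_{\xi,L}(t,\cdot)\to\theta$ uniformly in $\R^N$ as $t\to+\infty$. A preliminary, repeatedly used fact is that for every fixed $t>0$, $u(t,x)-v_{\xi,L}(t,x)\to 0$ as $\|x\|\to+\infty$: this follows from Assumption~\ref{ass:ini2} by translating $u$ and $v_{\xi,L}$ along any sequence $y_n\to\infty$, extracting convergent subsequences of the shifted initial data in the $(L_1,\dots,L_N)$-torus, and invoking parabolic compactness with uniqueness.

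For the outer estimate ($c'>c^*(\theta)$), I fix $c\in(c^*(\theta),c')$. The smaller root $\lambda_-(c)=(c-\sqrt{c^2-4f'(\theta^+)})/2$ of $\lambda^2-c\lambda+f'(\theta^+)=0$ is strictly smaller than $\lambda^*\le\lambda_0$, so I pick $\lambda'\in(\lambda_-(c),\lambda^*]$, which satisfies $\delta_0:=c\lambda'-\lambda'^2-f'(\theta^+)>0$. I consider the piecewise super-solution
\[\bar u(t,x):=\begin{cases}v_{\xi,L}(t,x)+C\,e^{-\lambda'(\|x\|-ct-R)},&\|x\|\ge ct+R,\\ 1,&\|x\|<ct+R,\end{cases}\]
where $R$ is chosen large enough that, together with $\lambda_0\ge\lambda'$ and~\eqref{ass_u0v0}, one has $\bar u(0,\cdot)\ge u_0$, and $C$ is chosen small enough that $\phi:=Ce^{-\lambda'(\|x\|-ct-R)}\le C$ stays in the $C^1$-regularity range of $f$ near $\theta$. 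A direct computation using the equation for $v_{\xi,L}$ yields, on $\{\|x\|>ct+R\}$,
\[\partial_t\bar u-\Delta\bar u-f(\bar u)=\bigl[f'(\theta^+)\phi-\bigl(f(v_{\xi,L}+\phi)-f(v_{\xi,L})\bigr)\bigr]+\Bigl(\delta_0+\tfrac{(N-1)\lambda'}{\|x\|}\Bigr)\phi.\]
Once $t\ge T$ is large enough that $|v_{\xi,L}-\theta|$ is uniformly small, a case analysis on whether $v_{\xi,L}+\phi\le\theta$ (where the bracket equals $f'(\theta^+)\phi\ge 0$) or $v_{\xi,L}+\phi>\theta$ (where the $C^1$-expansion bounds the bracket below by $-\epsilon\phi$ with $\epsilon<\delta_0$) shows $\bar u$ is a genuine super-solution there, while $\{\|x\|<ct+R\}$ is trivial. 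On $[0,T]$, a linear comparison applied to $w=u-v_{\xi,L}$, which solves $\partial_tw=\Delta w+b(t,x)w$ with bounded $b$, propagates the exponential decay from~\eqref{ass_u0v0} up to time $T$, so that (after possibly enlarging $R$) the nonlinear super-solution continues to dominate $u$ for $t\ge T$. Comparison then gives $u\le\bar u$, and for $\|x\|\ge c't$ one has $\phi(t,x)\le Ce^{\lambda' R}e^{-\lambda'(c'-c)t}\to 0$ uniformly, yielding $\limsup u(t,x)\le\theta$ on $\{\|x\|\ge c't\}$. The matching $\liminf u(t,x)\ge\theta$ follows from the preliminary fact combined with $v_{\xi,L}\to\theta$.

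For the inner estimate ($c<c^*(\theta)$), combining the preliminary fact with $v_{\xi,L}\to\theta$ uniformly and Assumption~\ref{ass:ini1}, for every $\epsilon>0$ I find $T_\epsilon$ and $R_\epsilon$ such that $u(T_\epsilon,\cdot)\ge 1-\epsilon$ on $B_{R_\epsilon}$ and $u(T_\epsilon,\cdot)\ge\theta-\epsilon$ on $\R^N$. Comparing $u(T_\epsilon+\cdot,\cdot)$ with the solution $\tilde u$ of~\eqref{eqn:RD} starting from $(1-\epsilon)\chi_{B_{R_\epsilon}}+(\theta-\epsilon)\chi_{\R^N\setminus B_{R_\epsilon}}$, the classical Aronson--Weinberger spreading theorem~\cite{AW} for ignition nonlinearities, applied with invaded state $\theta-\epsilon<\theta$ and $R_\epsilon$ large enough to ignite, gives that $\tilde u$ spreads from $\theta-\epsilon$ to $1$ at speed $c^*(\theta-\epsilon)$. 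Continuity of $\alpha\mapsto c^*(\alpha)$ at $\theta^-$ yields $c^*(\theta-\epsilon)\to c^*(\theta)$ as $\epsilon\to 0^+$, so for any $c<c^*(\theta)$ one picks $\epsilon$ small enough that $c^*(\theta-\epsilon)>c$, giving the inner estimate by comparison.

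The main obstacle is the outer super-solution construction: at the critical decay $\lambda^*$, the linearized operator has no slack, so the argument genuinely uses $c$ strictly greater than $c^*(\theta)$ (providing $\lambda'<\lambda^*$ with $\delta_0>0$) together with the uniform convergence $v_{\xi,L}\to\theta$ to absorb the nonlinear error $f(v_{\xi,L}+\phi)-f(v_{\xi,L})-f'(\theta^+)\phi$ across the ignition threshold. Bridging the initial exponential decay~\eqref{ass_u0v0} with the onset time of the nonlinear super-solution, via a linear propagation of this decay and a compatible choice of $R$, is the most delicate technical step.
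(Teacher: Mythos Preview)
Your inner estimate and the preliminary linear propagation of the exponential bound on $u-v_{\xi,L}$ are correct and essentially coincide with the paper's argument. The outer supersolution, however, has a genuine gap at the interface $\|x\|=ct+R$. There the outer piece equals $v_{\xi,L}(t,x)+C$, which (for the small $C$ you require so that $\phi\le C$ stays in the $C^1$-range of $f$ near $\theta$) is close to $\theta+C<1$, while the inner piece equals $1$. A piecewise function with a \emph{downward} jump as $\|x\|$ increases through the interface is not a generalized supersolution; equivalently, applying the comparison principle on the exterior domain $\{\|x\|>ct+R\}$ would require $u\le v_{\xi,L}+C$ on the moving lateral boundary, and this is precisely the upper spreading bound you are trying to establish. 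Enlarging $C$ so that $v_{\xi,L}+C\ge 1$ at the interface does not rescue the construction: you would then have to check that $v_{\xi,L}+\phi$ is a supersolution in the intermediate zone where it ranges over $(\theta+\rho,1-\rho)$, and your computation, which hinges on $f(v_{\xi,L}+\phi)-f(v_{\xi,L})\approx f'(\theta^+)\phi$, has no control there since $f$ is merely Lipschitz and positive on that interval.

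The paper handles this by inserting, for each direction $e\in S^{N-1}$, a planar traveling front $U_c$ with speed $c>c^*(\theta)$ (Claim~\ref{claim:super_sol}). Because $U_c$ solves $U_c''+cU_c'+f(U_c)=0$ exactly, the supersolution $\overline{U}_c(t,x;\eta,e)=\min\{1,U_c(x\cdot e-ct-\eta\zeta(t))+\eta\,e^{-c^*(x\cdot e-ct)/2-\nu t}\}$ absorbs the full nonlinearity across the transition from $1$ to $\theta$; the periodic-plus-exponential piece $\overline{w}=v_{\xi,\lambda}+A\,e^{-\lambda_c(x\cdot e-ct-X)}$ is used only far ahead, where both $\overline{w}$ and $\overline{U}_c$ lie in a small neighborhood of $\theta$ and the $\min$-gluing is legitimate. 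This front-based intermediate layer is the missing ingredient in your outline.
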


In the above theorem, the convergence rate of $u_0$ to $v_0$ is optimal. Indeed, in dimension~1 and in the limiting trivial case where $ v_0 \equiv \theta$ and $u_0= \theta + A\,e^{-\lambda |x|}$ for large enough~$x$ with $A>0$, $f'(\theta^+) >0$ and $0< \lambda < \lambda^*$, it is known that the solution~$u$ spreads with the speed $c=\lambda+f'(\theta^+)/\lambda > c^* (\theta)$~\cite{uchiyama}.

Note that the restriction of $f$ in the interval $[\theta,1]$ is of the monostable type, that is $f (\theta) = f(1)=0$ and $f >0$ in $(\theta,1)$. No further assumption on~$f$ is made in Theorem~\ref{th:critical_ignition}: in particular, $f_{| [\theta,1]}$ may or may not be of the more restricted classical KPP case, where it is typically assumed to be concave. Dynamics drastically differ between the KPP and the more general monostable case, as the propagation may be either pulled from ahead, or pushed by the whole front, see~\cite{gghr,stokes}. Our proof deals with both those situations and is partly inspired by the stability properties of traveling fronts with respect to fastly decaying perturbations~\cite{sattinger}.

Finally, we turn to a different situation where the initial condition is front-like and asymptotically periodic in a given direction~$e$. More precisely, we will make the following assumption:

\begin{assumption}\label{ass:front_like}
There exists a direction $e  \in S^{N-1}$ such that the initial condition $u_0$ satisfies the following two conditions:
\begin{equation*}
\left\{
\begin{array}{l}
\displaystyle \liminf_{ x \cdot e \to - \infty} u_0 (x) > \theta , \vspace{3pt}\\
\displaystyle  \exists\,v_0 \  (1,...,1)\mbox{-periodic} , \; \exists\,L_1, ...,L_N >0 , \quad u_0 (x) - v_{0} \Big(\frac{x_1}{L_1},..., \frac{x_N}{L_N}  \Big) \rightarrow 0 \mbox{ as } x \cdot e \rightarrow +\infty .
\end{array}\right.
\end{equation*}
Here the convergence is understood to hold uniformly with respect to the orthogonal directions.
\end{assumption}

This assumption means that, while the initial condition~$u_0$ still oscillates periodically around $\theta$ as~$x \cdot e \to +\infty$ (if $v_0$ still satisfies~\eqref{eqn:cond_v0}), it now lies above the ignition threshold $\theta$ in the opposite direction. Note that this latter fact immediately implies, by the classical results of~\cite{AW}, that Assumption~\ref{ass:ini1} holds. Furthermore, from the observations made in the first paragraph after Theorem~\ref{th:spreading_ignition}, the large-time dynamics of the solution~$u$ is expected to be still governed by the magnitude of the asymptotic period vector of $u_0$ as $x \cdot e \to + \infty$, in a similar fashion to Theorem~\ref{th:spreading_ignition}. Actually, with the same arguments as in the proof of Theorem~\ref{th:spreading_ignition}, it follows that if $\overline{v_0} < \theta$ and $u_0$ satisfies Assumption~\ref{ass:front_like} with $\|L\| < L^* (\xi_L)$, then $u$ spreads from $\theta_{\xi_L ,\|L\|}\in(\overline{v_0},\theta)$ to 1 in the direction~$e$ uniformly with respect to the orthogonal directions, in the sense that
\begin{equation*}
\left\{
\begin{array}{ll}
\forall \, 0 \leq  c < c^*, & \displaystyle \lim_{t \rightarrow +\infty} \sup_{ x \cdot e \leq ct} |u(t,x) -1| = 0,\vspace{3pt}\\
\forall \, c>c^*, & \displaystyle \lim_{t \rightarrow +\infty} \sup_{x \cdot e  \geq ct} |u (t,x) - \alpha | =0.
\end{array}
\right.
\end{equation*}
In fact, we will also prove in this framework a more precise result, namely that the profile of the solution converges to that of a traveling front. The main difficulty is that traveling fronts are very sensitive to perturbations which do not decay fast enough to the invaded state, so that the spatial oscillations of $u$ require a careful approach. Our last main theorem reads as follows.

\begin{theorem}\label{th:CV_profile}
Let $v_0 : \R^N \to [0,1]$ be a $(1,...,1)$-periodic function satisfying~\eqref{eqn:cond_v0} and $\overline{v_0} < \theta$, and let~$u_0$ be a front-like initial condition in the sense of Assumption~$\ref{ass:front_like}$. Assume moreover that~$0< \| L\| < L^* (\xi_L)$ and denote
$$\alpha = \theta_{\xi_L,\|L\|} \in (\overline{v_0},\theta).$$
Assume also that
\be\label{u0v0}
| u_0 (x) - v_0 (x)| \leq A\,e^{-\lambda_0\,x \cdot e}\ \hbox{ for all }x\in\R^N,
\ee
where $\lambda_0 >0$, $A>0$, and $e\in S^{N-1}$ is given by Assumption~$\ref{ass:front_like}$.\par
Then there exists a bounded function $m:[0,+\infty) \times ( \R e)^\perp \to \R$ such that
$$m(t,y) - m(t,0) \to 0 \mbox{ locally uniformly in } y \in (\R e )^\perp \ \mbox{ as } t \to +\infty$$
and
\be\label{convfront}
\big\| u \big(t, x + c^* (\alpha)\,t\,e + m(t,y)\,e\big) - U_{\alpha} (x \cdot e ) \big\|_{L^\infty (\R^N)} \to 0 \ \mbox{ as } t \to +\infty,
\ee
where $y = x - (x \cdot e) e$ is the component of $x$ which is orthogonal to~$e$, and $U_{\alpha}$ is the unique traveling front connecting $\alpha$ to $1$.
\end{theorem}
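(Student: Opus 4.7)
The plan is to combine the spreading information from Theorem~\ref{th:spreading_ignition} (in its front-like variant mentioned just above the theorem) with a Fife--McLeod/Sattinger-type stability argument for the ignition front $U_\alpha$, and then to handle the transverse variable through a level-set parametrisation of the interface.

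\textbf{Reducing to a transition layer.} Let $v(t,x)$ be the solution of \eqref{eqn:RD} starting from $v_0(x_1/L_1,\dots,x_N/L_N)$. Since $\|L\|<L^*(\xi_L)$ and $\overline{v_0}<\theta$, the construction of $\theta_{\xi,\lambda}$ underlying Theorem~\ref{th:spreading_ignition} yields that $v$ is $(L_1,\dots,L_N)$-periodic in $x$ and $v(t,\cdot)\to\alpha$ uniformly on $\R^N$ as $t\to+\infty$. Combining this with a linear parabolic bound on $w=u-v$ (which satisfies $\partial_t w=\Delta w+a(t,x)w$ with $a\in L^\infty$, starting from data controlled by the exponential bound \eqref{u0v0}) and the front-like spreading statement, I obtain that for every $\varepsilon>0$ there exist $T_\varepsilon,R_\varepsilon>0$ such that, for $t\ge T_\varepsilon$, $|u(t,x)-1|\le\varepsilon$ on $\{x\cdot e\le c^*(\alpha)t-R_\varepsilon\}$ and $|u(t,x)-\alpha|\le\varepsilon$ on $\{x\cdot e\ge c^*(\alpha)t+R_\varepsilon\}$.

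\textbf{Sandwich by shifted planar fronts.} I would apply the classical super/subsolution scheme: setting $q(t):=q_0\,e^{-\gamma t}$ and $\sigma(t):=M\int_{T_\varepsilon}^{t} q(s)\,ds$ for suitable $q_0,\gamma>0$ and large $M$, the functions
\[
w^\pm(t,x):=U_\alpha\bigl(x\cdot e-c^*(\alpha)\,t\mp\sigma(t)-\sigma_\varepsilon^\pm\bigr)\pm q(t)
\]
are respectively a super- and a sub-solution of \eqref{eqn:RD} on $[T_\varepsilon,+\infty)\times\R^N$, thanks to $f'(1)<0$, $f\equiv 0$ on $[0,\theta]$, and the strict monotonicity $U_\alpha'<0$ on the transition interval. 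By choosing $\sigma_\varepsilon^\pm$ large enough, using the exterior bounds from the previous step on the two half-spaces combined with the monotonicity of $U_\alpha$ and compactness in the central zone, one gets $w^-(T_\varepsilon,\cdot)\le u(T_\varepsilon,\cdot)\le w^+(T_\varepsilon,\cdot)$. The comparison principle propagates this for all $t\ge T_\varepsilon$; with $\sigma(t)\to\sigma_\infty^\varepsilon$ and $q(t)\to 0$, this yields, for each $\varepsilon>0$, an asymptotic trapping of $u(t,\cdot)$ between two translates of the planar front $U_\alpha(x\cdot e-c^*(\alpha)\,t-\tau_\pm^\varepsilon)$, and refining $\varepsilon$ makes the shift gap $\tau_+^\varepsilon-\tau_-^\varepsilon$ arbitrarily small.

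\textbf{Phase extraction and transverse stabilisation.} Fix $\lambda_*:=U_\alpha(0)$. The sandwich together with interior parabolic Schauder estimates implies that, for $t$ large, the map $s\mapsto u(t,y+se)$ is strictly decreasing in a neighbourhood of the interface (being $C^1$-close to $U_\alpha$ on the transition region), so the implicit function theorem defines a bounded $C^1$ function $m(t,y)$ by
\[
u\bigl(t,y+(c^*(\alpha)\,t+m(t,y))\,e\bigr)=\lambda_*,
\]
with $\nabla_y m$ uniformly bounded. To prove \eqref{convfront} and the local uniform convergence $m(t,y)-m(t,0)\to 0$, I argue by compactness: for any $t_n\to+\infty$ and any bounded sequence $y_n$, the shifted solutions $u_n(s,x):=u(t_n+s,\,x+y_n+(c^*(\alpha)\,t_n+m(t_n,y_n))\,e)$ converge, up to a subsequence and in $C^{1,2}_{\mathrm{loc}}(\R\times\R^N)$, to an entire solution $u_\infty$ of \eqref{eqn:RD} with $u_\infty(0,0)=\lambda_*$. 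The trapping of the previous step, after letting $\varepsilon\to 0$ within the limit, forces $u_\infty$ to lie between two translates of $U_\alpha(x\cdot e-c^*(\alpha)\,s)$ with identical shift, so $u_\infty(s,x)\equiv U_\alpha(x\cdot e-c^*(\alpha)\,s)$ by the normalisation $\lambda_*=U_\alpha(0)$. Independence of this limit from the subsequence and from $y_n$ yields both \eqref{convfront} (taking $y_n=y$ and $s=0$) and $\nabla_y m(t_n,y_n)\to 0$, hence the required transverse flatness.

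\textbf{Main obstacle.} The crux is the Sattinger sandwich: unlike in the classical setting, the perturbation of the front ahead of it does not decay exponentially in $x\cdot e$ but only in $t$ (via $v(t,\cdot)\to\alpha$), forcing a decomposition into a long-range, time-decaying part (handled by $q(t)$) and a near-field part (absorbed into $\sigma_\varepsilon^\pm$); the exponential bound \eqref{u0v0} is precisely what allows the finite initial matching at $t=T_\varepsilon$. A secondary delicate point is the Liouville-type identification of $u_\infty$ as the planar front, which requires excluding non-planar entire solutions trapped between two coinciding translates of $U_\alpha$ — this can be settled by a transverse sliding argument exploiting the strict monotonicity of $U_\alpha$.
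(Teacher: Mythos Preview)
Your sandwich in Step~2 does not work as written. The functions
\[
w^\pm(t,x)=U_\alpha\bigl(x\cdot e-c^*(\alpha)t\mp\sigma(t)-\sigma_\varepsilon^\pm\bigr)\pm q(t),\qquad q(t)=q_0e^{-\gamma t},
\]
are \emph{not} super/subsolutions of~\eqref{eqn:RD} ahead of the front. Indeed, in the region where $w^+\le\theta$ (hence $f(w^+)=0$) and $U_\alpha\le\theta$ (hence $f(U_\alpha)=0$), one computes, using $U_\alpha''+c^*(\alpha)U_\alpha'=-f(U_\alpha)=0$,
\[
\partial_t w^+-\Delta w^+-f(w^+)=-\sigma'(t)\,U_\alpha'(z)+q'(t)=\sigma'(t)\,|U_\alpha'(z)|-\gamma q(t).
\]
Since $U_\alpha'(z)\to 0$ (exponentially) as $z\to+\infty$ while $\gamma q(t)>0$ is independent of~$z$, the right-hand side is negative for $z$ large at every fixed time. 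The point you cite, ``$f\equiv 0$ on $[0,\theta]$'', is exactly what kills the argument: the invaded state $\alpha\in(0,\theta)$ is only neutrally stable, so a spatially constant perturbation $q(t)$ cannot be forced to decay. The classical Fife--McLeod/Sattinger sandwich works for bistable fronts because \emph{both} limiting states are linearly stable; here only the state~$1$ is.

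This is precisely why the paper's construction is more elaborate. In Claim~\ref{claim:super_solb} the perturbation is taken to decay in space as well, namely $\eta\,e^{-\lambda_0(x\cdot e-c^*(\alpha)t)/2-\nu t}$, which restores the supersolution inequality ahead. Even then, these modified fronts $\overline U_\alpha,\underline U_\alpha$ still tend to $\theta_{\xi,\lambda}$ as $x\cdot e\to+\infty$, whereas $u(t,\cdot)$ is close to the oscillating $v_{\xi,\lambda}(t,\cdot)$ there; so the paper \emph{glues} them with the auxiliary functions $v_{\xi,\lambda}(t+\tau,x)\pm A\,e^{-\lambda_0(x\cdot e-c^*(\alpha)t-X)}$, producing the piecewise super/subsolutions $\overline u,\underline u$ of Section~\ref{sec:CV_step1}. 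Your ``main obstacle'' paragraph correctly identifies the difficulty but your proposed fix (absorb the long-range part into $q(t)$) is exactly the step that fails.

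A secondary issue: the claim in Step~1 that $|u(t,x)-\alpha|\le\varepsilon$ on $\{x\cdot e\ge c^*(\alpha)t+R_\varepsilon\}$ for a \emph{fixed} $R_\varepsilon$ does not follow from the spreading result (which only gives it on $\{x\cdot e\ge ct\}$ for $c>c^*(\alpha)$); bounded front width is part of what must be proved, and in the paper it is obtained \emph{a posteriori} from the trapping~\eqref{eqn:last_claim_t}. Finally, for the Liouville step the paper invokes the Berestycki--Hamel classification of generalized transition fronts \cite{BH2} rather than a sliding argument; your alternative could be made to work, but only once a correct trapping is in place.
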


Let us point out here that $L^* (\xi)$ and $\theta_{\xi, \lambda}$ are the same mappings as in Theorem~\ref{th:spreading_ignition}, and only depend on the initial condition $u_0$ through the limiting periodic function $v_0$, as will be shown in Section~\ref{sec:periodic}.

\begin{remark}{\rm
As it will be clear from its proof, Theorem~$\ref{th:CV_profile}$ also holds when, instead of~\eqref{eqn:cond_v0}, we assume that
$$0 \leq \mathop{\mathrm{ess } \inf}_{\R^N} v_0 < \mathop{\mathrm{ess } \sup}_{\R^N} v_0 \leq \theta.$$
Namely, in such a case, one can set $L^* (\xi) = +\infty$ for all $\xi\in S^{N-1}_+$ and the conclusion of Theorem~$\ref{th:CV_profile}$ is fulfilled for all $L=(L_1,...,L_N)\in(0,+\infty)^N$ with $\theta_{\xi_L,\|L\|}=\overline{v_0}\in(0,\theta)$. Part (b)-(i) of Theorem~$\ref{th:spreading_ignition}$ also holds in this case for all $L=(L_1,...,L_N)\in(0,+\infty)^N$ with $\theta_{\xi_L,\|L\|}=\overline{v_0}\in(0,\theta)$ (we also recall that the one-dimensional case follows from~\cite{HS}).}
\end{remark}

\begin{remark}{\rm
In dimension~$1$, the methods we use in this paper allow us to deal with initial data that are asymptotically periodic to two different profiles as $x \to \pm \infty$. More precisely, let us fix two~$1$-periodic functions $v_0^\pm\in L^{\infty}(\R,[0,1])$ such that $\mathrm{ess} \inf_{\R} v_0^\pm < \theta < \mathrm{ess} \sup_{\R} v_0^\pm$, and two positive real numbers~$L^\pm$. Let $u$ be the solution of~\eqref{eqn:RD} with initial datum such that $u_0(x)- v_0^\pm (x/L^\pm)  \to 0$ as~$x \to \pm \infty$ and satisfying Assumption~$\ref{ass:ini1}$. Depending on the functions $v_0^\pm$ and on the values of~$L^\pm$, the solution $u$ either converges as $t\to+\infty$ to $1$ uniformly in $\R$, or converges to $1$ uniformly in~$\R_+$ or~$\R_-$ but spreads with a finite speed in the other direction, or spreads with finite speeds in both directions. More precisely, on the one hand, if $\overline{v_0^+} \geq \theta$ $($resp. $\overline{v_0^-} \geq \theta$$)$, then $u$ converges to $1$ uniformly in $\R_+$ $($resp. $\R_-$$)$. On the other hand, if $\overline{v_0^+} < \theta$ $($resp. $\overline{v_0^-} < \theta$$)$, then there exists $L^{*,+} >0$ $($resp. $L^{*,-}>0$$)$ such that if~$L^+ > L^{*,+}$ $($resp. $L^- > L^{*,-}$$)$, then $u$ converges uniformly to $1$ in $\R_+$ $($resp.~$\R_-$$)$, and if $L^+ < L^{*,+}$ $($resp. $L^- < L^{*,-}$$)$, then the solution spreads from some $\theta^+_{L^+}\in(\overline{v_0^+},\theta)$ $($resp. $\theta^-_{L^-}\in(\overline{v_0^-},\theta)$$)$ to $1$ with finite speed to the right $($resp. to the left$)$.}
\end{remark}

\paragraph{Outline.} In Section~2 the periodic problem~\eqref{eqn:RD} with initial condition $v_0 (x_1 / L_1 ,... ,x_N /L_N)$ is analyzed. After a suitable change of variable, we will define $L^* (\xi)$ and $\theta_{\xi,\lambda}$ and prove that these quantities satisfy the statements of Theorem~\ref{th:spreading_ignition}. Then, in Section~3, we will use the results on the periodic problem to get our spreading results, that is both Theorem~\ref{th:spreading_ignition} and~\ref{th:critical_ignition}. We will see that those results rely only on the asymptotic behavior of the solution of~\eqref{eqn:RD} with initial datum $v_0 (x_1 / L_1 ,... ,x_N /L_N)$. Special attention will be put on the construction of suitable supersolutions when the solution of the periodic problem converges to the critical value $\theta$ and when the initial condition satisfies the upper estimate~\eqref{ass_u0v0}. Section~4 is devoted to the proof of the convergence Theorem~\ref{th:CV_profile} for front-like solutions which are asymptotic oscillating in one direction. 


\section{The periodic reaction-diffusion equation}\label{sec:periodic}

Throughout this section, $v_0\in L^{\infty}(\R^N,[0,1])$ is a given $(1,...,1)$-periodic function satisfying~\eqref{eqn:cond_v0}. If~$u_0$ satisfies Assumption~\ref{ass:ini2}, then as $\|x\| \to +\infty$ the solution~$u$ shall be close to the solution $v_{\xi,\lambda}$ of~\eqref{eqn:RD} with initial data $v_0 (x_1/L_1 ,... ,x_N/L_N)$, where $\xi=\xi_L$ and $\lambda=\|L\|$, i.e. $L=(L_1,...,L_N)=\lambda\,\xi$. It is clear that for any positive time, $v_{\xi,\lambda}$ retains the $L$-periodicity of the initial data, so that it satisfies the following spatially periodic parabolic problem:
\begin{equation}\label{eqn:RD_per}
\left\{
\begin{array}{l}
\partial_t v_{\xi,\lambda}  = \Delta v_{\xi,\lambda} + f(v_{\xi,\lambda}), \ \ t>0 \mbox{ and } x \in\R^N , \vspace{3pt}\\
v_{\xi,\lambda} (t,\cdot) \mbox{ is } (L_1,...,L_N)\mbox{-periodic}, \vspace{3pt}\\
\displaystyle v_{\xi,\lambda} (0,x) = v_0 \left( \frac{x_1}{L_1} , ... , \frac{x_N}{L_N } \right).
\end{array}
\right.
\end{equation}
Notice also that $0 < v_{\xi,\lambda} (t,x) < 1$ for all $t >0$ and $x \in \R^N$ from the strong maximum principle. The first main step in the proof of Theorem~\ref{th:spreading_ignition} is to describe the asymptotic behavior of the solution $v_{\xi,\lambda}$ of~\eqref{eqn:RD_per} in terms of $\xi$ and $\lambda$. We prove the following:

\begin{theorem}\label{th:periodic_pb}
There exists a continuous map 
$$\left. \begin{array}{rcl}
S_+^{N-1} & \to &  [0,+\infty) \vspace{3pt} \\
\xi & \mapsto & L^* (\xi)
\end{array}
\right. $$
and, for any $\xi \in S_+^{N-1}$ with $L^*(\xi)>0$, there exists a continuous and increasing map
$$\left. \begin{array}{rcl}
(0, L^* (\xi)] & \to &  [0,\theta] \vspace{3pt} \\
\lambda & \mapsto & \theta_{\xi,\lambda},
\end{array}
\right. $$
such that:
\begin{itemize}
\item[$(i)$] if $\overline{v_0} \geq \theta$, then $L^*(\xi) = 0$ for every $\xi \in S_+^{N-1}$;
\item[$(ii)$] if $\overline{v_0} < \theta$, then $L^* (\xi) \in (0,+\infty)$, $\theta_{\xi,L^* (\xi)} = \theta$ and $\lim_{\lambda \to 0^+} \theta_{\xi,\lambda} =\overline{v_0} >0$ for every $\xi \in S_+^{N-1}$. Furthermore, $\inf_{\xi \in S_+^{N-1}} L^* (\xi) >0$.
\end{itemize}
Moreover:
\begin{enumerate}[(i)]
\item[$(iii)$] if $\overline{v_0}<\theta$ and $0 < \lambda \leq L^* (\xi)$, then $v_{\xi,\lambda}$ converges to $\theta_{\xi,\lambda}$ as $t \to +\infty$ uniformly in $\R^N$. Furthermore, the rate of convergence is exponential when $\theta_{\xi,\lambda} < \theta$, that is, when $\lambda < L^* (\xi)$;
\item[$(iv)$] if $\lambda >L^* (\xi)$ $($that is, if $\overline{v_0}\ge\theta$ with any $\lambda>0$, or if $\overline{v_0}<\theta$ with $\lambda>L^*(\xi)>0$$)$, then $v_{\xi,\lambda}$ converges to $1$ uniformly in $\R^N$ as $t \to +\infty$.
\end{enumerate}
\end{theorem}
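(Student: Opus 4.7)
\textbf{Proof plan for Theorem~\ref{th:periodic_pb}.} The plan hinges on the rescaling
$$w^\lambda(s,y) := v_{\xi,\lambda}\big(\lambda^2 s,\, \lambda\xi_1 y_1,\ldots,\lambda\xi_N y_N\big),$$
which converts~\eqref{eqn:RD_per} into the $(1,\ldots,1)$-periodic Cauchy problem
$$\partial_s w^\lambda = \sum_{i=1}^N \xi_i^{-2}\,\partial_{y_i}^2 w^\lambda + \lambda^2 f(w^\lambda), \qquad w^\lambda(0,\cdot)=v_0,$$
in which the initial condition is fixed and the parameter $(\xi,\lambda)$ appears smoothly only in the coefficients; a monotonicity plus continuous-dependence analysis then produces the claimed $L^*(\xi)$ and $\theta_{\xi,\lambda}$.

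I would first identify the long-time limit as a constant in $[0,\theta]\cup\{1\}$. The spatial average $V(s):=\int_{[0,1]^N}w^\lambda(s,\cdot)$ is nondecreasing since $f\ge 0$ by~\eqref{ass_f}, and bounded, so $V(s)\to V_\infty$ and $V'(s)=\lambda^2\int f(w^\lambda)\to 0$. By parabolic estimates, any $\omega$-limit $W$ obtained from time shifts $w^\lambda(\cdot+s_n,\cdot)$ is an entire $(1,\ldots,1)$-periodic solution whose spatial average is constant, hence $f(W)\equiv 0$ and $W$ takes values in $[0,\theta]\cup\{1\}$. By continuity, either $W\equiv 1$, or $W$ takes values in $[0,\theta]$; in the latter case $W$ solves the linear anisotropic heat equation on $\R\times\mathbb{T}^N$ for all $s\in\R$, and bounded entire solutions of that linear equation on a flat torus must be constants (each nonzero Fourier mode decays as $s\to+\infty$ and would blow up as $s\to-\infty$ unless it vanishes). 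Every $\omega$-limit is therefore a constant with spatial mean $V_\infty$, yielding the uniform convergence $w^\lambda(s,\cdot)\to\sigma_{\xi,\lambda}\in[0,\theta]\cup\{1\}$; I then set $\theta_{\xi,\lambda}:=\sigma_{\xi,\lambda}$ whenever $\sigma_{\xi,\lambda}\le\theta$.

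Next, I would establish monotonicity in $\lambda$ and handle the two extreme regimes. For $\lambda_1\ge\lambda_2$, $\lambda_1^2 f\ge\lambda_2^2 f$ since $f\ge 0$, so $w^{\lambda_1}$ is a supersolution for the equation of $w^{\lambda_2}$, and the parabolic maximum principle gives $w^{\lambda_1}\ge w^{\lambda_2}$; hence $\sigma_{\xi,\lambda}$ is nondecreasing in $\lambda$. When $\overline{v_0}\ge\theta$, $V_\infty\ge\overline{v_0}\ge\theta$ combined with $\sigma_{\xi,\lambda}\in[0,\theta]\cup\{1\}$ forces either $\sigma_{\xi,\lambda}=1$ or $\sigma_{\xi,\lambda}=\theta=\overline{v_0}$; in the second case $V\equiv\theta$, so $f(w^\lambda)\equiv 0$ and $w^\lambda$ solves a pure linear heat equation, contradicting the fact that $\mathrm{ess\,sup}\,v_0>\theta$ makes the heat evolution exceed $\theta$ somewhere at any positive time. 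Hence $L^*(\xi)\equiv 0$ in this case. When $\overline{v_0}<\theta$, for small $\lambda$ the Duhamel remainder $\lambda^2\|f\|_\infty s_0$ is small on a bounded time interval $[0,s_0]$; choosing $s_0$ so large that the linear anisotropic heat flow of $v_0$ is within $(\theta-\overline{v_0})/4$ of its mean, and then $\lambda$ small enough, makes $w^\lambda(s_0,\cdot)<\theta$ pointwise, so $f(w^\lambda)\equiv 0$ beyond $s_0$ and the mean is conserved thereafter; this gives $\sigma_{\xi,\lambda}\to\overline{v_0}$ as $\lambda\to 0^+$, and because the spectral gap of $\sum\xi_i^{-2}\partial_{y_i}^2$ on $\mathbb{T}^N$ only grows as $\xi$ approaches $\partial S_+^{N-1}$, the smallness of $\lambda$ is uniform in $\xi$, yielding $\inf_{S_+^{N-1}}L^*>0$. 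For large $\lambda$, working on $\R^N$, the Lebesgue differentiation theorem at a point where $v_0$ essentially exceeds $\theta+2\eta$ combined with heat-kernel concentration on a scale small compared to $\lambda$ produces, at some small positive time, a region of radius comparable to $\lambda$ on which $v_{\xi,\lambda}\ge\theta+\eta$; the Aronson--Weinberger spreading result~\cite{AW} then forces $v_{\xi,\lambda}\to 1$ locally uniformly, hence uniformly by periodicity, so $\sigma_{\xi,\lambda}=1$ and $L^*(\xi)<+\infty$.

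Set $L^*(\xi):=\inf\{\lambda>0:\sigma_{\xi,\lambda}=1\}$; by monotonicity $\sigma_{\xi,\lambda}=1$ for $\lambda>L^*(\xi)$ and $\sigma_{\xi,\lambda}\in[\overline{v_0},\theta]$ for $\lambda<L^*(\xi)$. The critical identity $\theta_{\xi,L^*(\xi)}=\theta$ follows by contradiction: if $\sigma_{\xi,L^*(\xi)}=\theta_0<\theta$, then $w^{L^*(\xi)}(s_0,\cdot)<(\theta_0+\theta)/2<\theta$ for some large $s_0$, and continuous dependence of the flow on $\lambda$ at the finite time $s_0$ preserves the strict inequality $w^\lambda(s_0,\cdot)<\theta$ for $\lambda$ slightly larger than $L^*(\xi)$; beyond $s_0$ then $f(w^\lambda)\equiv 0$ and $w^\lambda$ converges to its mean, still strictly less than $\theta$, contradicting $\sigma_{\xi,\lambda}=1$. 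Continuity of $\lambda\mapsto\theta_{\xi,\lambda}$ on $(0,L^*(\xi)]$ follows from the same finite-time continuity; strict monotonicity comes from the strong maximum principle applied to $w^{\lambda_1}-w^{\lambda_2}$ with the strictly positive source $(\lambda_1^2-\lambda_2^2)f(w^{\lambda_2})$ on a set of positive measure. Continuity of $\xi\mapsto L^*(\xi)$ on $S_+^{N-1}$ is obtained from upper and lower semicontinuity by the analogous arguments in the supercritical and subcritical regimes. Exponential convergence in the range $\theta_{\xi,\lambda}<\theta$ is immediate once $w^\lambda<\theta$ is reached in finite time, after which $w^\lambda$ satisfies the linear heat equation on $\mathbb{T}^N$ with its spectral-gap-driven exponential decay to the mean. \emph{The main obstacle} is precisely the critical identity $\theta_{\xi,L^*(\xi)}=\theta$, which forbids a jump of $\sigma_{\xi,\lambda}$ from a value strictly below $\theta$ directly to $1$: proving it requires combining the subcritical linearization below $\theta$ with the finite-time continuous dependence on $\lambda$ afforded by the rescaled formulation, together with a careful use of the strong maximum principle.
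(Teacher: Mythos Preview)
Your plan follows the paper's architecture closely: the same rescaling to a $(1,\ldots,1)$-periodic problem with $\lambda^2 f$, the same monotone comparison in $\lambda$ since $f\ge 0$, the same small-$\lambda$ Duhamel argument for $\lim_{\lambda\to0^+}\theta_{\xi,\lambda}=\overline{v_0}$ and the uniform bound $\inf_\xi L^*(\xi)>0$, the same large-$\lambda$ argument via heat-kernel localization and~\cite{AW}, and the same open-set reasoning for continuity of $L^*$. The one genuinely different ingredient is your proof that $w^\lambda(s,\cdot)$ converges to a constant: the paper uses the energy Lyapunov functional $G[v]=\int\big(\tfrac12|\nabla v|^2-F(v)\big)$ to force $\partial_t v\to 0$ in $L^2$, while you use monotonicity of the spatial mean $V(s)$ to get $\int f(w^\lambda)\to 0$, and then classify entire bounded periodic solutions with $f(W)\equiv 0$ as constants via the Liouville property of the heat equation on the torus. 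Your route is a bit more elementary (it avoids $H^1$ estimates and exploits directly that $f\ge 0$), and it pays an extra dividend: the mean argument immediately pins down the limit as $V_\infty$, so uniqueness of the $\omega$-limit comes for free, whereas the paper needs a separate stability argument to exclude two distinct constant limits.

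Two points deserve more care. First, for strict monotonicity of $\lambda\mapsto\theta_{\xi,\lambda}$, the strong maximum principle only gives $w^{\lambda_1}(s,\cdot)>w^{\lambda_2}(s,\cdot)$ pointwise for every $s>0$; this alone does not prevent the uniform limits from coinciding. You need an extra step to propagate the gap to $s=+\infty$. The paper does this by using that $f$ is nondecreasing on $[0,\theta+\rho]$, so for large $s$ the difference satisfies a linear inequality with nonnegative zeroth-order term and a constant negative supersolution persists. Within your mean-based framework there is an even shorter fix: once both $w^{\lambda_i}$ drop below $\theta$ (which happens in finite time when $\theta_{\xi,\lambda_i}<\theta$), both satisfy the pure heat equation, so the spatial mean of $w^{\lambda_1}-w^{\lambda_2}$ is conserved and strictly positive, forcing $\theta_{\xi,\lambda_1}>\theta_{\xi,\lambda_2}$. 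Second, in your treatment of the critical value you argue by contradiction only against $\sigma_{\xi,L^*(\xi)}<\theta$; you must also rule out $\sigma_{\xi,L^*(\xi)}=1$. This is the symmetric open-set argument (the set $\{\sigma_{\xi,\lambda}=1\}$ is open in $\lambda$ by the same finite-time continuous dependence, so the infimum $L^*(\xi)$ cannot belong to it), which the paper makes explicit via its partition $D_1,D_2,D_3$ and which you already invoke for the continuity of $L^*$, but you should state it here as well.
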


\begin{remark}\label{rem:periodic_pb}{\rm
Note that one could easily extend this theorem while dropping~\eqref{eqn:cond_v0}, as other cases are actually trivial. On the one hand, if $\mathrm{ess} \sup_{\R^N} v_0 \leq \theta$, then $v_{\xi,\lambda}$ is a solution of the heat equation and then converges uniformly to $\overline{v_0}$ as $t\to+\infty$. On the other hand, if $\mathrm{ess} \inf_{\R^N} v_0 \geq \theta$ and $v_0 \not \equiv \theta$, then from the strong maximum principle and the positivity of $f$ on $(\theta,1)$, the solution $v_{\xi,\lambda}$ clearly converges uniformly to $1$ as $t\to+\infty$. Therefore, Theorem~$\ref{th:periodic_pb}$ completes the picture of large-time behaviors of solutions of the ignition type reaction-diffusion equation with periodic initial conditions.}
\end{remark}

\begin{proof}[Proof of Theorem~$\ref{th:periodic_pb}$] {\it Step 1: convergence of $v_{\xi,\lambda}$ to $\theta_{\xi,\lambda}\in[0,\theta]\cup\{1\}$.} Let us first fix $\lambda >0$ and~$\xi \in S_+^{N-1}$. The proof of the convergence of the solution $v_{\xi,\lambda}$ of~\eqref{eqn:RD_per} as $t \to +\infty$ to a constant stationary state is rather simple and relies mostly on the following classical Lyapunov function:
$$G[v (\cdot) ] = \int_{C_{\lambda \xi}} \Big(\frac{| \nabla v  (x) |^2}{2} - F(v  (x) )\Big)\, dx,$$
where
$$C_{\lambda \xi} := (0,L_1) \times ... \times (0, L_N)\ \hbox{ and }\ F(\varsigma) := \int_0^{\varsigma} f(s)ds.$$
From standard parabolic estimates and elementary arguments, the function $t\mapsto g(t):=G [ v_{\xi,\lambda} (t,\cdot) ]$ is differentiable in $(0,+\infty)$ with
\begin{equation*}
g'(t)=\frac{d G [v_{\xi,\lambda}  (t,\cdot)]}{dt}= - \int_{C_{\lambda \xi}} | \partial_t v_{\xi,\lambda}  (t,x) |^2 dx \leq 0,
\end{equation*}
and therefore this function is actually of class $C^1$ in $(0,+\infty)$ and of class $C^{1,\beta}$ in $[1,+\infty)$ for some~$\beta >0$. As $0\le v_{\xi,\lambda} \leq 1$, we know that $g(t)=G [v_{\xi,\lambda} (t,\cdot)]$ is bounded from below, independently of time, by $- L_1 \times ... \times L_N \times  \|  f \|_{L^\infty ([0,1])}$. Thus, $g(t)$ converges as $t \rightarrow +\infty$ to some constant and since~$g$ is of class~$C^{1,\beta}$ in $[1,+\infty)$, one infers that $g'(t)= -\|\partial_t v (t,\cdot) \|_{L^2 (C_{\lambda \xi})}  \rightarrow 0$ as~$t \rightarrow +\infty$. By standard parabolic estimates again, there is a sequence $(t_n)_{n\in\N}\to+\infty$ such that~$v_{\xi,\lambda} (t_n,\cdot)$ converges uniformly in $\R^N$ to an $L$-periodic classical stationary solution $\theta_{\xi,\lambda} (x)$ of~\eqref{eqn:RD} such that~$0 \leq \theta_{\xi,\lambda} (x) \leq 1$ for all $x \in \R^N$. Moreover, it is immediate to see that such stationary solutions are the constants in the set $[0, \theta] \cup \{1\}$: this can be checked by noting that $f(\theta_{\xi,\lambda} (x_0)) \leq 0$ for any $x_0$ such that~$\theta_{\xi,\lambda} (x_0) = \min_{\R^N} \theta_{\xi,\lambda}$. Thus, since~$f \geq 0$ in $[0,1]$ and $f>0$ in $(\theta,1)$, it follows from the strong maximum principle that
$$\theta_{\xi,\lambda} \equiv \theta_{\xi,\lambda} (x_0) \in [0,\theta] \cup \{1\}.$$

However, it remains to check that the limit $\theta_{\xi,\lambda}$ is unique and did not depend on the sequence~$(t_n)_{n\in\N}$. First note that $1$ is the only constant steady state with negative energy $G[1]<0$, so that the uniqueness of the limit is immediate if $\theta_{\xi,\lambda} = 1$. Assume then that $\theta_{\xi,\lambda} \leq \theta$. We proceed by contradiction and assume that $v_{\xi,\lambda} (t,\cdot)$ does not converge to $\theta_{\xi,\lambda}$ as $t \to +\infty$. By standard parabolic estimates, we extract another sequence $(t'_n)_{n\in\N}\to+\infty$ such that $v_{\xi,\lambda} (t'_n , \cdot)$ converges uniformly in $\R^N$ to some constant~$\theta' \neq \theta_{\xi,\lambda}$, which by the above argument is not equal to $1$ either. In particular, we can assume without loss of generality that $\theta_{\xi,\lambda} < \theta '$. Then for any $\varepsilon$ such that $0<\e<\theta'-\theta_{\xi,\lambda}\,(\le\theta-\theta_{\xi,\lambda})$, there exists some $n_0\in\N$ large enough so that
$$\| v_{\xi,\lambda} (t_{n_0},\cdot) - \theta_{\xi,\lambda} \|_{L^\infty (\R^N)} \leq \varepsilon.$$
It immediately follows from the comparison principle and the fact that $f \equiv 0$ in the interval $[\max \{ 0,\theta_{\xi,\lambda} - \varepsilon\}, \theta_{\xi,\lambda} + \varepsilon]$, that $\| v_{\xi ,\lambda} (t,\cdot) - \theta_{\xi,\lambda} \|_{L^\infty (\R^N)} \leq \varepsilon$ for all $t \geq t_{n_0}$. Hence, by passing to the limit as $t=t'_n\to+\infty$, one gets that $| \theta ' - \theta_{\xi,\lambda} | \leq \varepsilon$, which leads to a contradiction. We finally conclude that
$$v_{\xi,\lambda} (t,\cdot) \to \theta_{\xi,\lambda} \in [0,\theta] \cup \{1\} \mbox{ uniformly in $\R^N$ as } t \to +\infty.$$

{\it Step 2: definition of $L^*(\xi)$.} We now define the map
$$\xi \in S_+^{N-1} \mapsto L^* (\xi) := \inf \big\{ \lambda >0 \; | \ \theta_{\xi,\lambda} = 1 \big\} ,$$
and we shall prove that this map and the maps $\lambda \mapsto \theta_{\xi,\lambda}$ defined above for any $\xi \in S_+^{N-1}$ satisfy all the properties stated in Theorem~\ref{th:periodic_pb}. 

Let us first prove in this paragraph that $L^* (\xi)$ is a well-defined real number for each given~$\xi \in S_+^{N-1}$. Write $\xi=(\xi_1,...,\xi_N)$. Since $\theta < \mathrm{ess} \sup_{\R^N} v_0 \leq 1$, let us fix $\theta' \in (\theta, \mathrm{ess} \sup_{\R^N} v_0)$. From~\cite{AW}, there exists~$R >0$ such that any solution $u$ of~\eqref{eqn:RD} with $u_0 \geq \theta'$ in an Euclidean ball of radius $R$ converges locally uniformly to 1 as $t \to +\infty$. Let us now introduce the solution $w$ of the Cauchy problem
\begin{equation}\label{heatscaled}
\left\{
\begin{array}{l}
\partial_t w  =  \displaystyle \sum_{i=1}^N \xi_i^{-2} \partial_{x_i}^2 w , \ \ t>0 \mbox{ and } x \in \R^N, \vspace{3pt}\\
w (t,\cdot) \mbox{ is } (1,...,1)\mbox{-periodic}, \vspace{3pt}\\
\displaystyle w (0,x) = v_0 (x).
\end{array}
\right.
\end{equation}
Since $\max_{\R^N} w(t,\cdot) \to \mathrm{ess} \sup_{\R^N} v_0$ as $t \to 0^+$, there exists $t_0 >0$ such that $\max_{\R^N} w(t_0,\cdot) > \theta '$. By continuity of $w (t_0, \cdot)$, there exist $z_0 \in \R^N$ and $r >0$ such that $\min_{\overline{B_r (z_0)}} w (t_0,\cdot)\ge \theta'$, where $B_r (z_0)$ denotes the Euclidean ball of radius $r$ centered at $z_0$. For any $\lambda >0$ and $L = \lambda \xi$, the function 
\begin{equation}\label{defwxilambda}
w_{\xi, \lambda} (t,x) :=w \Big( \frac{t}{\lambda^2}, \frac{x_1}{L_1},... , \frac{x_N}{L_N} \Big)
\end{equation}
satisfies the Cauchy problem
\begin{equation}\label{eqwxilambda}
\left\{
\begin{array}{l}
\partial_t w_{\xi, \lambda}  =  \displaystyle \Delta w_{\xi, \lambda} , \ \ t>0 \mbox{ and } x \in \R^N, \vspace{3pt}\\
w_{\xi, \lambda} (t,\cdot) \mbox{ is } (L_1,...,L_N)\mbox{-periodic}, \vspace{3pt}\\
\displaystyle w_{\xi, \lambda} (0,x) = v_0 \left( \frac{x_1}{L_1},... , \frac{x_N}{L_N} \right) = v_{\xi,\lambda}(0,x).
\end{array}
\right.
\end{equation}
Therefore, there exists $\lambda_0 >0$ such that, for every $\lambda \geq \lambda_0$, there exists $x_\lambda \in \R^N$ with
$$w_{\xi,\lambda} (\lambda^2 t_0, \cdot) \geq \theta '\ \hbox{ in }B_R (x_\lambda).$$
Finally, the solution $v_{\xi,\lambda}$ of~\eqref{eqn:RD_per} satisfies $v_{\xi,\lambda} \geq w_{\xi,\lambda}$ in $[0,+\infty) \times \R^N$ from the parabolic maxi\-mum principle (since $f \geq 0$). Therefore, for every $\lambda \geq \lambda_0$, $v_{\xi,\lambda} (\lambda^2 t_0, \cdot) \geq \theta'$ in $B_R (x_\lambda)$, whence~$v_{\xi,\lambda} (\lambda^2 t_0 + t ,\cdot) \to 1$ locally uniformly in $\R^N$ as $t \to +\infty$ from the choice of $R>0$. By $L$-periodicity of $v_{\xi,\lambda}$, one concludes that, for every $\lambda \geq \lambda_0$, $v_{\xi,\lambda} (t,\cdot) \to 1$ as $t\to +\infty$ uniformly in $\R^N$, thus $\theta_{\xi,\lambda}=1$ and $L^* (\xi) \in \R$.

Then, the convergence of~$v_{\xi,\lambda}$ to~$\theta_{\xi,\lambda}$ in statement~$(iii)$ is immediate by construction: more precisely, as $t\to+\infty$, $v_{\xi,\lambda}(t,\cdot) \to \theta_{\xi,\lambda}\in[0,\theta]$ if $\lambda < L^* (\xi)$ while $v_{\xi,\lambda} \to \theta_{\xi,\lambda} \in [0,\theta] \cup \{1\}$ if~$\lambda = L^* (\xi)$ (provided that $L^*(\xi)>0$). But we still need to show that $\theta_{\xi,L^* (\xi)} =\theta$. Notice here that, when~$\theta_{\xi,\lambda} < \theta$, then there exists some time $T>0$ such that $0 \leq v_{\xi,\lambda} (T,\cdot) \leq \theta$ in $\R^N$. Thus, $v_{\xi,\lambda}$ is an $L$-periodic solution of the heat equation for $t \geq T$ and it follows that there exists $C >0$ such that, for all $t \geq T$ and $x\in \R^N$,
$$| v_{\xi,\lambda} (t,x) - \theta_{\xi,\lambda}|=\big | v_{\xi,\lambda} (t,x) -  \overline{v_{\xi,\lambda} (t,\cdot)} \big | \leq C e^{-\mu_1 (t - T)},$$
where the average $\overline{v_{\xi,\lambda} (t,\cdot)}$ over $C_{\lambda \xi}$ is necessarily equal to $\theta_{\xi,\lambda}$ for all $t \geq T$, and $\mu_1 >0$ is the second eigenvalue of the Laplacian in the subspace of $L$-periodic functions.

\smallbreak
{\it Step 3: monotonicity of $\theta_{\xi,\lambda}$ with respect to $\lambda>0$.} In order to show the other properties of Theorem~\ref{th:periodic_pb} and in particular the monotonicity of $\theta_{\xi,\lambda}$ with respect to $\lambda$, let us now make a change of variable similar to~\eqref{defwxilambda} and define the functions
\be\label{scaling}
\tilde{v}_{\xi,\lambda} (t,x_1,...,x_N) := v_{\xi,\lambda} (\lambda^2 t ,L_1 x_1, ..., L_N x_N).
\ee
They solve the parabolic Cauchy problems
\begin{equation}\label{eqn:RD_per2}
\left\{
\begin{array}{l}
\partial_t \tilde{v}_{\xi,\lambda}  =  \displaystyle \sum_{i=1}^N \xi_i^{-2}\partial_{x_i}^2 \tilde{v}_{\xi,\lambda} + \lambda^2 f(\tilde{v}_{\xi,\lambda}), \ \ t>0 \mbox{ and } x \in\R^N, \vspace{3pt}\\
\tilde{v}_{\xi,\lambda} (t,\cdot) \mbox{ is } (1,...,1)\mbox{-periodic}, \vspace{3pt}\\
\displaystyle \tilde{v}_{\xi,\lambda} (0,x) = v_0 (x),
\end{array}
\right.
\end{equation}
where, for each $1 \leq i \leq N$, $\xi_i=L_i/\lambda>0$. Moreover, since $f \geq 0$, it follows from the maximum principle that $(0,+\infty) \ni \lambda \mapsto \tilde{v}_{\xi,\lambda} (t,x)$ is nondecreasing for any $(t,x)\in[0,+\infty)\times\R^N$. In particular,
$$\theta_{\xi,\lambda} = \lim_{t \rightarrow +\infty} v_{\xi,\lambda} (t, \cdot)=\lim_{t \rightarrow +\infty} \tilde{v}_{\xi,\lambda} (t/\lambda^2 , \cdot)$$
is nondecreasing with respect to~$\lambda>0$. It follows from the definition of $L^*(\xi)$ that, for any $\xi\in S^{N-1}_+$ and $\lambda > L^* (\xi)$, the function $v_{\xi,\lambda}$ converges uniformly to $\theta_{\xi,\lambda}=1$ as~$t \to +\infty$ (this is statement $(iv)$ in Theorem~\ref{th:periodic_pb}, provided that one proves that $L^*(\xi)=0$ if $\overline{v_0}\ge\theta$).

Let us now check that $\lambda\mapsto\theta_{\xi,\lambda}$ is not only nondecreasing in $(0,+\infty)$, but also increasing in the interval $(0,L^* (\xi)]$, for any $\xi \in S_+^{N-1}$ such that $L^*(\xi)>0$. In this paragraph, we fix a normalized period vector $\xi\in S^{N-1}_+$ such that $L^*(\xi)>0$. From the previous paragraph, it is enough to show that~$\lambda \mapsto \theta_{\xi,\lambda}$ is increasing in the open interval $(0,L^* (\xi))$. Let us choose $0<\lambda_1 < \lambda_2<L^*(\xi)$ and let us prove that~$\theta_{\xi,\lambda_1}<\theta_{\xi, \lambda_2}$. As aforementioned, the maximum principle implies that
$$\tilde{v}_{\xi,\lambda_1} \leq \tilde{v}_{\xi,\lambda_2}\ \hbox{ in }[0,+\infty) \times \R^N.$$
We will actually show that the inequality is strict in $(0,+\infty) \times \R^N$. Assume by contradiction that there exists $(t_0,x_0) \in (0,+\infty) \times \R^N$ such that $\tilde{v}_{\xi,\lambda_1} (t_0,x_0)= \tilde{v}_{\xi,\lambda_2} (t_0,x_0)$. Since $\tilde{v}_{\xi,\lambda_1}$ is a subsolution of the equation~\eqref{eqn:RD_per2} satisfied by $\tilde{v}_{\xi,\lambda_2}$ with $\lambda=\lambda_2$, the strong maximum principle implies that $\tilde{v}_{\xi,\lambda_1} \equiv \tilde{v}_{\xi,\lambda_2}$ in $[0,t_0]\times \R^N$, whence $\lambda_1^2 f (\tilde{v}_{\xi,\lambda_1}) \equiv \lambda_2^2 f (\tilde{v}_{\xi,\lambda_1})$ and $f (\tilde{v}_{\xi,\lambda_1}) \equiv 0$ in $[0,t_0] \times \R^N$ (since $0<\lambda_1<\lambda_2$). On the other hand, by~\eqref{ass_f},~\eqref{eqn:cond_v0} and the periodicity of $\tilde{v}_{\xi,\lambda_1}$, there holds $\max_{\R^N} \tilde{v}_{\xi,\lambda_1} (t,\cdot) < 1$ for every $t >0$ and $\max_{\R^N} \tilde{v}_{\xi,\lambda_1} (t,\cdot) \to \mathrm{ess} \sup_{\R^N} v_0 > \theta$ as $ t \to 0^+$. Thus, there exists $t_1 \in (0,t_0)$ such that
$$\theta < \max_{\R^N} \tilde{v}_{\xi,\lambda_1} (t_1,\cdot) < 1.$$
This contradicts~\eqref{ass_f} and the fact $f (\tilde{v}_{\xi,\lambda_1} (t_1,\cdot)) \equiv 0$ in $\R^N$. Finally, $\tilde{v}_{\xi,\lambda_1}(t,x)<\tilde{v}_{\xi,\lambda_2}(t,x)$ for all~$(t,x)\in(0,+\infty)\times\R^N$ and, by periodicity and continuity,
$$\max_{\R^N} \big(\tilde{v}_{\xi,\lambda_1} (t,\cdot) - \tilde{v}_{\xi,\lambda_2} (t ,\cdot)\big)<0\ \hbox{ for all }t >0.$$
Moreover, one has
\begin{eqnarray*}
0 & = & \partial_t (\tilde{v}_{\xi,\lambda_1} - \tilde{v}_{\xi,\lambda_2}) - A \left[ \tilde{v}_{\xi,\lambda_1} - \tilde{v}_{\xi,\lambda_2}\right] - \lambda_1^2 f (\tilde{v}_{\xi,\lambda_1}) + \lambda_2^2 f (\tilde{v}_{\xi,\lambda_2})\vspace{3pt}\\
&= &  \partial_t (\tilde{v}_{\xi,\lambda_1} - \tilde{v}_{\xi,\lambda_2}) - A \left[\tilde{v}_{\xi,\lambda_1} - \tilde{v}_{\xi,\lambda_2}\right] - \lambda_2^2  \left( f (\tilde{v}_{\xi,\lambda_1}) - f (\tilde{v}_{\xi,\lambda_2})\right) + (\lambda_2^2 - \lambda_1^2) f (\tilde{v}_{\xi,\lambda_1})
\end{eqnarray*}
in $(0,+\infty)\times\R^N$, where $A[v] := \sum_{i=1}^N \xi_i^{-2} \partial_{x_i}^2 v$. Hence, since $f \geq 0$ and $0<\lambda_1 < \lambda_2 $, the function~$\tilde{v}_{\xi,\lambda_1} - \tilde{v}_{\xi,\lambda_2}$ is a subsolution of
\begin{equation}\label{eqn:supersol_1}
\partial_t (\tilde{v}_{\xi,\lambda_1} - \tilde{v}_{\xi,\lambda_2}) - A \left[ \tilde{v}_{\xi,\lambda_1} - \tilde{v}_{\xi,\lambda_2} \right] -  c \times (\tilde{v}_{\xi,\lambda_1} - \tilde{v}_{\xi,\lambda_2}) \leq 0
\end{equation}
in $(0,+\infty)\times\R^N$, where
$$c(t,x):=\lambda_2^2 \frac{ f (\tilde{v}_{\xi,\lambda_1} (t,x)) - f (\tilde{v}_{\xi,\lambda_2}(t,x))}{\tilde{v}_{\xi,\lambda_1}(t,x)- \tilde{v}_{\xi,\lambda_2} (t,x)}$$
for all $t>0$ and $x\in\R^N$ (remember that $\tilde{v}_{\xi,\lambda_1}(t,x)-\tilde{v}_{\xi,\lambda_2}(t,x)<0$ for all $t >0$ and $x\in \R^N$). Besides, since $\lambda_1$ and $\lambda_2$ are both (strictly) smaller than $L^* (\xi)$, we know that $\tilde{v}_{\xi,\lambda_1}$ and $\tilde{v}_{\xi,\lambda_2}$ converge uniformly in $\R^N$ to the constants $\theta_{\xi,\lambda_1} \leq \theta_{\xi,\lambda_2}$ in $[0,\theta]$ (they cannot be equal to 1 by definition of~$L^* (\xi)$). Since, by~\eqref{ass_f}, the function $f$ is nondecreasing on some interval $[0,\theta + \rho]$ with $\rho >0$, there exists $t_1 >0$ such that
$$c(t,x)=\lambda_2^2 \frac{ f (\tilde{v}_{\xi,\lambda_1} (t,x)) - f (\tilde{v}_{\xi,\lambda_2} (t,x))}{\tilde{v}_{\xi,\lambda_1} (t,x) - \tilde{v}_{\xi,\lambda_2} (t,x)} \geq 0\ \hbox{ for all }t \geq t_1\hbox{ and }x \in \R^N.$$
Therefore, the negative constant $\sigma := \max_{\R^N}\big(\tilde{v}_{\xi,\lambda_1} (t_1,\cdot) - \tilde{v}_{\xi,\lambda_2} (t_1,\cdot)\big)<0$ is a supersolution of equation~\eqref{eqn:supersol_1} in $[t_1,+\infty)\times\R^N$, that is, $-c(t,x)\,\sigma\ge0$ for all $(t,x)\in[t_1,+\infty)\times\R^N$. Thus,
$$\tilde{v}_{\xi,\lambda_1} (t,x) - \tilde{v}_{\xi,\lambda_2} (t,x) \leq \sigma<0\ \hbox{ for all }t \geq t_1\hbox{ and }x \in \R^N,$$
whence $\theta_{\xi,\lambda_1} - \theta_{\xi,\lambda_2}\le\sigma<0$. The strict monotonicity of $\lambda\mapsto\theta_{\xi,\lambda}$ in $(0, L^* (\xi))$, and hence in $(0,L^* (\xi)]$, is proved.

\smallbreak
{\it Step 4: $L^*(\xi)>0$ if $\overline{v_0}<\theta$ and $L^*(\xi)=0$ if $\overline{v_0}\ge\theta$.} As $\lambda \rightarrow 0^+$, since the right-hand side $\lambda^2 f(\tilde{v}_{\xi,\lambda})$ of~\eqref{eqn:RD_per2} converges to $0$ uniformly in $(0,+\infty)\times\R^N$, one infers that the function $\tilde{v}_{\xi,\lambda}$ converges locally uniformly in time in $[0,+\infty)$ and uniformly in space (by periodicity) to the solution $w$ of the scaled heat equation~\eqref{heatscaled} in $(0,+\infty)\times\R^N$ with initial condition $v_0$. Since $\|w(t,\cdot)-\overline{v_0}\|_{L^{\infty}(\R^N)}\to0$ as~$t\to+\infty$, it then follows that, for any $\varepsilon >0$, there exists $t_\e>0$ such that
$$\|v_{\xi,\lambda} (t_\e, \cdot)- \overline{v_0}\|_{L^{\infty}(\R^N)} \leq \varepsilon\ \hbox{ for all }\lambda>0\hbox{ small enough}.$$
On the one hand, when $\overline{v_0} < \theta$ and $\varepsilon>0$ is chosen so that $0\le\overline{v_0}-\epsilon<\overline{v_0}+\epsilon\le\theta$, both constants $\overline{v_0} \pm \varepsilon$ are solutions of~\eqref{eqn:RD}, and we get from the parabolic comparison principle that $\theta_{\xi,\lambda} \in [ \overline{v_0} - \varepsilon, \overline{v_0} + \varepsilon]$ for all $\lambda>0$ small enough. Hence,
$$\lim_{\lambda \rightarrow 0^+} \theta_{\xi,\lambda} = \overline{v_0}\ \hbox{ and }\ L^* (\xi) >0\ \hbox{ if }\overline{v_0}<\theta.$$
On the other hand, if $\overline{v_0} \geq \theta$, we get by the same reasoning that $\lim_{\lambda \to 0^+} \theta_{\xi,\lambda} \geq \theta$. In that case, if~$L^*(\xi)$ were (strictly) positive, it would then easily follow from the (strict) monotonicity of $\theta_{\xi,\lambda}$ with respect to $\lambda\in(0,L^*(\xi)]$ that $\theta_{\xi,\lambda} = 1 $ for all $\lambda>0$, which would contradict the definition of $L^*(\xi)$ and its assumed positivity. Therefore, if $\overline{v_0}\ge\theta$, then $L^*(\xi)=0$ and $\theta_{\xi,\lambda}=1$ for all $\lambda>0$.

\smallbreak
{\it Step 5: continuity properties and $\theta_{\xi,L^*(\xi)}=\theta$ if $\overline{v_0}<\theta$.} We have now proved statements $(i)$ to $(iv)$ of Theorem~\ref{th:periodic_pb}, apart from the fact that $\theta_{\xi,\lambda} = \theta$ when $\lambda= L^* (\xi) >0$ and the property $\inf_{\xi\in S^{N-1}_+}L^*(\xi)>0$ if $\overline{v_0}<\theta$. These facts will be proved below along with the continuity of $L^* (\xi)$ and~$\theta_{\xi,\lambda}$ with respect to $\xi \in S_+^{N-1}$ and $\lambda \in (0,L^* (\xi)]$ in case $\overline{v_0}<\theta$. In the remaining part of the proof, we can assume that $\overline{v_0} < \theta$ (since there is nothing more to prove in the other case).

We begin by partitioning $S_+^{N-1} \times(0,+\infty)$ into three sets as follows:
\begin{eqnarray*}
D_1 &:= & \big\{ (\xi,\lambda) \in S_+^{N-1} \times(0,+\infty) \; | \ \theta_{\xi,\lambda} =1 \big\},\\
D_2 &:= & \big\{ (\xi,\lambda) \in S_+^{N-1} \times(0,+\infty) \; | \ \theta_{\xi,\lambda} < \theta \big\},\\
D_3 & :=  & \big\{ (\xi,\lambda) \in S_+^{N-1} \times(0,+\infty) \; | \ \theta_{\xi,\lambda} = \theta \big\}.
\end{eqnarray*}
Firstly, let us check that~$D_1$ is open. Fix $(\xi,\lambda) \in D_1$. In other words, $\tilde{v}_{\xi,\lambda} (t,\cdot)\to1$ as $t \to +\infty$ uniformly in $\R^N$. In particular, there is $t_1>0$ such that $\min_{\R^N}\tilde{v}_{\xi,\lambda}(t_1,\cdot)>(1+\theta)/2$. But, from standard parabolic estimates, $\tilde{v}_{\xi',\lambda'}(t_1,\cdot)\to\tilde{v}_{\xi,\lambda}(t_1,\cdot)$ as $(\xi',\lambda')\to(\xi,\lambda)$ in $S^{N-1}_+\times(0,+\infty)$ locally uniformly in $\R^N$ (and then uniformly in $\R^N$ by $(1,...,1)$-periodicity). Therefore, there is $\eta>0$ such that
$$\min_{\R^N}\tilde{v}_{\xi',\lambda'}(t_1,\cdot)>\frac{1+\theta}{2}$$
for all $(\xi',\lambda')\in S^{N-1}_+\times(0,+\infty)$ with $\|\xi'-\xi\|+|\lambda'-\lambda|\le\eta$. For any such $(\xi',\lambda')$, the maximum principle and the assumption~\eqref{ass_f} imply that $\tilde{v}_{\xi',\lambda'}(t,\cdot)\ge\zeta(t-t_1)$ in $\R^N$ for all $t\ge t_1$, where the solution $\zeta$ of $\zeta'=f(\zeta)$ in $\R$ with $\zeta(0)=(1+\theta)/2$ satisfies $\zeta(+\infty)=1$. Since $\tilde{v}_{\xi',\lambda'}\le1$, one concludes that $\tilde{v}_{\xi',\lambda'}(t,\cdot)\to1$ as $t\to+\infty$ uniformly in $\R^N$, whence $\theta_{\xi',\lambda'}=1$. Finally, $D_1$ is open. From the definition of $L^*(\xi)$ and the fact that $\lambda\mapsto\theta_{\xi,\lambda}$ is nondecreasing in $(0,+\infty)$ for every $\xi\in S^{N-1}_+$, one also infers that the map $S_+^{N-1}\ni\xi \mapsto L^* (\xi)$ is upper semicontinuous.

Secondly, let us show that the set $D_2$ is open. Fix $(\xi,\lambda) \in D_2$. In this case, there exist some $\varepsilon >0$ and $t_1 >0$ such that 
$$\tilde{v}_{\xi,\lambda}  (t_1,\cdot) \leq \theta - \varepsilon\ \hbox{ in }\R^N.$$
Therefore, for any $(\xi',\lambda')\in S^{N-1}_+\times(0,+\infty)$ close enough to $(\xi,\lambda)$, the function $\tilde{v}_{\xi',\lambda'}$ satisfies~$\tilde{v}_{\xi',\lambda'} (t_1,\cdot) \leq \theta -\varepsilon/2$ in $\R^N$, whence $\theta_{\xi',\lambda'}\le\theta-\epsilon/2 < \theta$ and $(\xi',\lambda') \in D_2$. Thus, the set~$D_2$ is open. It also follows from the definition of $L^*(\xi)$, from the fact that $\lambda\mapsto\theta_{\xi,\lambda}$ is non\-decreasing in~$(0,+\infty)$ and smaller than $\theta$ in $(0,L^*(\xi))$, that $S_+^{N-1}\ni\xi \mapsto L^* (\xi)$ is lower semicontinuous. Henceforth, the map~$\xi\mapsto L^*(\xi)$ is continuous in $S^{N-1}_+$.

Since both $D_1$ and $D_2$ are open (and are not empty from the previous paragraphs), it follows that the set $D_3$ is not empty either. Moreover, from the strict monotonicity of $\theta_{\xi,\lambda}$ with respect to~$\lambda\in(0,L^*(\xi)]$, we infer that for each $\xi \in S_+^{N-1}$ there exists at most one $\lambda >0$ such that $\theta_{\xi,\lambda} = \theta$ and~$(\xi,\lambda) \in D_3$. By definition of $L^* (\xi)$ and the openness of $D_1$ and $D_2$, it is then clear that 
$$D_3 = \big\{ (\xi,\lambda)\in S^{N-1}_+\times(0,+\infty) \, | \ \lambda = L^* (\xi)\big\},$$
which ends the proof of part $(ii)$ of Theorem~\ref{th:periodic_pb}, apart from the property $\inf_{\xi\in S^{N-1}_+}L^*(\xi)>0$.

Now, let us show that the map $\lambda\mapsto\theta_{\xi,\lambda}$ is continuous in $(0,L^*(\xi)]$ for every $\xi\in S^{N-1}_+$. We will actually show a slightly more general continuity property. We choose $(\xi,\lambda) \in D_2 \cup D_3$ and a sequence~$(\xi_n,\lambda_n)_{n\in\N}$ in $D_2\cup D_3$ converging to $(\xi,\lambda)$ as $n\to+\infty$. For any $\varepsilon >0$, there exists $t_\e>0$ such that
$$\theta_{\xi,\lambda} - \frac{\varepsilon}{2} \leq \tilde{v}_{\xi,\lambda} (t_\e,\cdot) \leq \theta_{\xi,\lambda} + \frac{\varepsilon}{2}\ \hbox{ in }\R^N.$$
Thus, from the uniform convergence of $\tilde{v}_{\xi_n,\lambda_n}(t_\e,\cdot)$ to $\tilde{v}_{\xi,\lambda}(t_\e,\cdot)$ as $n\to+\infty$, we get that $\theta_{\xi,\lambda} - \varepsilon \leq \tilde{v}_{\xi_n,\lambda_n} (t_\e, \cdot) \leq \theta_{\xi,\lambda} + \varepsilon$ in $\R^N$ for all $n$ large enough. Hence, for any $\varepsilon>0$, there holds 
$$\max\{0,\theta_{\xi,\lambda} - \varepsilon\} \leq \theta_{\xi_n,\lambda_n} \leq \min \{ \theta, \theta_{\xi,\lambda} +\varepsilon \}$$
for all $n$ large enough. Therefore, the restriction of the map $(\xi,\lambda) \mapsto \theta_{\xi,\lambda}$ to $D_2 \cup D_3$ is continuous on that set. This yields in particular the continuity of the function $\lambda\mapsto\theta_{\xi,\lambda}$ in $(0,L^*(\xi)]$, for every~$\xi\in S^{N-1}_+$.

\smallbreak
{\it Step 6: $\inf_{\xi\in S^{N-1}_+}L^*(\xi)>0$ if $\overline{v_0}<\theta$.} Finally, in order to complete the proof of Theorem~\ref{th:periodic_pb}, let us show that $\inf_{\xi\in S^{N-1}_+}L^*(\xi)>0$, still assuming that $\overline{v_0}<\theta$. Let us first fix some real numbers~$\alpha$,~$\beta$ and $t_0>0$ such that $\overline{v_0}<\alpha<\beta<\theta$ and $\alpha+\|f\|_{L^{\infty}([0,1])}t_0\le\beta$. Remember that, for any~$(\xi,\lambda)\in S^{N-1}_+\times(0,+\infty)$ and $L=\lambda\,\xi$, $w_{\xi,\lambda}$ denotes the solution of the heat equation~\eqref{eqwxilambda} with initial condition $w_{\xi,\lambda}(0,x)=v_0(x_1/L_1,...,x_N/L_N)$. Since $\overline{v_0}<\alpha$, it follows from standard elementary arguments that there is $\lambda_0>0$ such that, for every $(\xi,\lambda)\in S^{N-1}_+\times(0,\lambda_0)$ and $L=\lambda\,\xi$, there holds
$$w_{\xi,\lambda}(t_0,\cdot)\le\alpha\ \hbox{ in }\R^N.$$
Moreover, for any $(\xi,\lambda)\in S^{N-1}_+\times(0,+\infty)$, the maximum principle implies that the solution $v_{\xi,\lambda}$ of~\eqref{eqn:RD_per} satisfies $0\le w_{\xi,\lambda}(t_0,\cdot)\le v_{\xi,\lambda}(t_0,\cdot)\le w_{\xi,\lambda}(t_0,\cdot)+\|f\|_{L^{\infty}([0,1])}t_0\ \hbox{ in }\R^N$. Therefore, for every~$(\xi,\lambda)\in S^{N-1}_+\times(0,\lambda_0)$ and $L=\lambda\,\xi$, one has
$$0\le v_{\xi,\lambda}(t_0,\cdot)\le\alpha+\|f\|_{L^{\infty}([0,1])}t_0\le\beta<\theta\ \hbox{ in }\R^N,$$
whence $v_{\xi,\lambda}$ solves the heat equation in $[t_0,+\infty)\times\R^N$ and $\theta_{\xi,\lambda}\le\beta<\theta$. Finally, by definition of~$L^*(\xi)$, one infers that $L^*(\xi)\ge\lambda_0>0$ for all $\xi\in S^{N-1}_+$. This is the desired result and the proof of Theorem~\ref{th:periodic_pb} is thereby complete.
\end{proof}

\hfill\break\indent
As was outlined in Remark~\ref{rem:periodic_pb}, it is straightforward to deal with the asymptotic behavior of~$v_{\xi,\lambda}$ when the hypothesis~\eqref{eqn:cond_v0} is not satisfied. For any $(1,...,1)$-periodic function $v_0\in L^{\infty}(\R^N,[0,1])$, it is therefore consistent to extend the definition of $L^* (\xi)$ as follows:
\begin{equation}\label{eqn:extend}
L^* (\xi) = \left\{ 
\begin{array}{lll}
+\infty & \ds\mbox{if } \mathop{\mathrm{ess} \sup}_{\R^N} v_0 \leq \theta & \!\!\hbox{and }\ds0\le\mathop{\mathrm{ess} \inf}_{\R^N} v_0 <\theta ,\vspace{3pt} \\
0 & \mbox{if }  \ds\theta\le\mathop{\mathrm{ess} \inf}_{\R^N} v_0  , 
\end{array}
\right.
\end{equation}
so that, except when $v_0 \equiv \theta$, the value $L^* (\xi)$ remains the critical period magnitude above which the solution~$v_{\xi,\lambda}$ converges to~$1$ as $t\to+\infty$ uniformly in $\R^N$, and below which it converges to some stationary state in the interval $[0,\theta)$. If $v_0 \equiv \theta$, then $v_{\xi,\lambda} \equiv \theta$ in $(0,+\infty) \times \R^N$ for all $(\xi,\lambda) \in S^{N-1}_+ \times (0,+\infty)$ and, with the convention $L^* (\xi)=0$ in this case, the conclusion of Proposition~\ref{prop:hom} below will still be valid.

We end this section by describing how the problem reduces to an $(N-i)$-dimensional situation as~$\xi$ approaches the boundary of~$S_+^{N-1}$ (up to a permutation of the coordinates, one can assume without loss of generality that $\xi$ approaches $\{0\}^i\times S^{N-i-1}_+$).

\begin{proposition}\label{prop:hom}
Let $v_0\in L^{\infty}(\R^N,[0,1])$ be a $(1,...,1)$-periodic function satisfying~\eqref{eqn:cond_v0}. Fix an integer $1 \leq i \leq N-1$ and define the function
$$v_0^i (x_{i+1},...,x_N) := \int_{[0,1]^i} v_0 (x_1,...x_i,x_{i+1},...,x_{N})\,dx_1\, ...\, \,dx_i,$$  
which belongs to $L^{\infty}(\R^{N-i},[0,1])$ and is $(1,...,1)$-periodic in $\R^{N-i}$. Applying Theorem~$\ref{th:periodic_pb}$ together with the definition~\eqref{eqn:extend}, we define the mappings $L^* : S_+^{N-1} \rightarrow[0,+\infty)$ and $L^*_i :S_+^{N-i-1} \rightarrow[0,+\infty]$ associated respectively with the functions $v_0$ and $v_0^i$. Then, for any~$\tilde{\xi}\in S^{N-i-1}_+$, the following convergence holds:
$$\lim_{\xi\in S^{N-1}_+,\, \xi\to(0,...,0,\tilde{\xi})} L^* (\xi) = L^*_i\big(\,\tilde{\xi}\,\big).$$
\end{proposition}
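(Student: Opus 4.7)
The strategy is to combine a homogenization statement on finite time intervals with the openness/monotonicity arguments already developed in the proof of Theorem~\ref{th:periodic_pb}. Fix $\tilde\xi\in S^{N-i-1}_+$, $\lambda>0$, and a sequence $\xi^{(n)}=(\xi^{(n)}_1,\ldots,\xi^{(n)}_N)\in S^{N-1}_+$ converging to $(0,\ldots,0,\tilde\xi)$, so that $\xi^{(n)}_j\to 0$ for $j\le i$ while $\xi^{(n)}_j\to\tilde\xi_j>0$ for $j>i$. Denote by $\tilde v^i_{\tilde\xi,\lambda}$ the scaled solution of the $(N-i)$-dimensional analogue of~\eqref{eqn:RD_per2} with initial datum $v_0^i$, whose large-time limit is~$\theta^{(i)}_{\tilde\xi,\lambda}\in[0,\theta]\cup\{1\}$ and whose critical value is $L^*_i(\tilde\xi)$.

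The key step (and the main technical obstacle) is the following finite-time homogenization claim: for every $0<\tau<T$, $\tilde v_{\xi^{(n)},\lambda}\to\tilde v^i_{\tilde\xi,\lambda}$ uniformly on $[\tau,T]\times\R^N$ as $n\to+\infty$. To prove it I would proceed in three substeps. First, the energy estimate obtained by multiplying~\eqref{eqn:RD_per2} by $\tilde v_{\xi^{(n)},\lambda}$ and integrating on $[0,T]\times[0,1]^N$ yields
\[
\sum_{k=1}^N (\xi^{(n)}_k)^{-2}\int_0^T\!\!\int_{[0,1]^N}\big|\partial_{x_k}\tilde v_{\xi^{(n)},\lambda}\big|^2\,dx\,dt\le C,
\]
with $C$ independent of $n$, thanks to the uniform bound $0\le\tilde v_{\xi^{(n)},\lambda}\le 1$ and the boundedness of $f$. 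In particular $\partial_{x_k}\tilde v_{\xi^{(n)},\lambda}\to 0$ in $L^2$ for every $k\le i$. Second, in the non-degenerating directions $x_{i+1},\ldots,x_N$ the diffusion coefficients remain in a fixed compact subset of $(0,+\infty)$, and the Duhamel representation of $\tilde v_{\xi^{(n)},\lambda}$ against the anisotropic heat semigroup on the torus (whose Fourier modes that are nonconstant in $x_1,\ldots,x_i$ decay at a rate containing the blowing-up factors $(\xi^{(n)}_j)^{-2}$, $j\le i$) together with the uniform $L^\infty$ bound provides compactness on every strip $[\tau,T]\times\R^N$. Third, the $L^2$ smallness of $\partial_{x_k}\tilde v_{\xi^{(n)},\lambda}$ for $k\le i$ forces any sub-sequential limit $\bar v$ to be independent of $(x_1,\ldots,x_i)$, and integrating~\eqref{eqn:RD_per2} over $[0,1]^i$ in $(x_1,\ldots,x_i)$ shows that the partial average $\bar v^{(n)}$ solves
\[
\partial_t \bar v^{(n)} = \sum_{j=i+1}^N (\xi^{(n)}_j)^{-2}\partial_{x_j}^2 \bar v^{(n)} + \lambda^2\int_{[0,1]^i} f\big(\tilde v_{\xi^{(n)},\lambda}\big)dx_1\cdots dx_i.
\]
Since $\bar v$ does not depend on $x_1,\ldots,x_i$, the integral passes to the limit as $f(\bar v)$, and Fubini at the initial time gives $\bar v^{(n)}(0,\cdot)=v_0^i$. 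Hence $\bar v=\tilde v^i_{\tilde\xi,\lambda}$ by uniqueness, and the whole sequence converges.

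Once this claim is available, the large-time conclusion follows the openness arguments of Step~5 of Theorem~\ref{th:periodic_pb}. If $\lambda>L^*_i(\tilde\xi)$, then $\tilde v^i_{\tilde\xi,\lambda}(t,\cdot)\to 1$ uniformly in $\R^{N-i}$, so some $T_1$ gives $\min_{\R^{N-i}}\tilde v^i_{\tilde\xi,\lambda}(T_1,\cdot)>(1+\theta)/2$; the homogenization claim then provides $\min_{\R^N}\tilde v_{\xi^{(n)},\lambda}(T_1,\cdot)>(1+\theta)/2$ for all large $n$, and the comparison with the ODE $\zeta'=f(\zeta)$ used in the openness of $D_1$ yields $\theta_{\xi^{(n)},\lambda}=1$, i.e.~$L^*(\xi^{(n)})<\lambda$. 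If on the other hand $\lambda<L^*_i(\tilde\xi)$, then $\theta^{(i)}_{\tilde\xi,\lambda}<\theta$, so some $T_2$ and some $\varepsilon>0$ give $\tilde v^i_{\tilde\xi,\lambda}(T_2,\cdot)\le\theta-\varepsilon$, hence $\tilde v_{\xi^{(n)},\lambda}(T_2,\cdot)\le\theta-\varepsilon/2$ for large $n$; since $f\equiv 0$ on $[0,\theta]$ by~\eqref{ass_f}, $\tilde v_{\xi^{(n)},\lambda}$ solves the pure heat equation on $[T_2,+\infty)\times\R^N$ and remains below $\theta$, so $L^*(\xi^{(n)})>\lambda$. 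Combining the two bounds gives $L^*(\xi^{(n)})\to L^*_i(\tilde\xi)$. The extremal cases $L^*_i(\tilde\xi)\in\{0,+\infty\}$ arising from the extension~\eqref{eqn:extend} are handled by keeping only the applicable one of the two bounds.
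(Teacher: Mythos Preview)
Your overall architecture matches the paper's: a finite-time homogenization claim followed by the openness arguments from Step~5 of Theorem~\ref{th:periodic_pb}. The second half of your proof (the two cases $\lambda>L^*_i(\tilde\xi)$ and $\lambda<L^*_i(\tilde\xi)$, with the ODE comparison and the fact that $f\equiv 0$ on $[0,\theta]$) is essentially identical to what the paper does.

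The notable difference is in how you obtain the homogenization step. You work with the \emph{scaled} solutions $\tilde v_{\xi^{(n)},\lambda}$ of~\eqref{eqn:RD_per2}, where $\xi_j^{-2}\to+\infty$ for $j\le i$, and argue via energy estimates, Fourier/Duhamel compactness, and passage to the limit in the averaged equation. This is correct but heavier than necessary. The paper stays with the \emph{unscaled} solutions $v_{\xi,\lambda}$ of~\eqref{eqn:RD_per}: there the equation $\partial_t v=\Delta v+f(v)$ is fixed and only the initial datum $v_0(x_1/(\lambda\xi_1),\ldots,x_N/(\lambda\xi_N))$ varies, oscillating rapidly in $x_1,\ldots,x_i$ and thus converging weak-$*$ in $L^\infty$ to $v_0^i(x_{i+1}/(\lambda\tilde\xi_{i+1}),\ldots,x_N/(\lambda\tilde\xi_N))$. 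Heat-kernel smoothing plus the Lipschitz bound on $f$ then give uniform convergence of $v_{\xi,\lambda}(t_0,\cdot)$ to $v^i(t_0,\cdot)$ at any fixed $t_0>0$, without having to analyze a singular parabolic limit. Your route and the paper's are equivalent through the change of variables~\eqref{scaling}, but the paper's framing makes the homogenization a one-line application of standard facts.

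One genuine (though easily repaired) gap: your argument does not cover the degenerate case $v_0^i\equiv\theta$. In that case $L^*_i(\tilde\xi)=0$ by~\eqref{eqn:extend}, yet $\tilde v^i_{\tilde\xi,\lambda}\equiv\theta$ for every $\lambda>0$, so the inequality $\min\tilde v^i_{\tilde\xi,\lambda}(T_1,\cdot)>(1+\theta)/2$ that you need for $\lambda>L^*_i(\tilde\xi)$ is never available. The paper treats this case separately at the start: $v_0^i\equiv\theta$ forces $\overline{v_0}=\theta$, hence $L^*(\xi)=0$ for all $\xi\in S^{N-1}_+$ by part~(i) of Theorem~\ref{th:periodic_pb}, and the conclusion is trivial. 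You should add this short observation.
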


Note that the integral defining $v^i_0(x_{i+1},...,x_N)$ converges for almost every $(x_{i+1},...,x_N)\in\R^{N-i}$ and satisfies $0\le v^i_0(x_{i+1},...,x_N)\le1$. However, the function $v_0^i$ may not satisfy the condition~\eqref{eqn:cond_v0}. In particular, the above proposition shows that the mapping $L^*$ may not be bounded with respect to~$\xi \in S_+^{N-1}$ (as shown in the particular example~\eqref{defv0x12}). Let us now briefly sketch the proof of Proposition~\ref{prop:hom}.\hfill\break

\begin{proof}[Proof of Proposition~$\ref{prop:hom}$]
Fix an integer $1 \leq i \leq N-1$ and $\tilde{\xi}=(\tilde{\xi}_{i+1},...,\tilde{\xi}_N)\in S_+^{N-i-1}$. First of all, if $v^i_0 \equiv \theta$ in $\R^{N-i}$, then $L^*_i (\tilde{\xi})=0$ by~\eqref{eqn:extend}, and $\overline{v_0}=\theta$. Hence, by part~(i) of Theorem~\ref{th:periodic_pb}, $L^* (\xi) = 0$ for all $\xi \in S^{N-1}_+$ and the desired conclusion follows.

Let us consider in the sequel that $v^i_0 \not \equiv \theta$. Assume first that $L^*_i(\tilde{\xi})$ is a real number and consider any real number $\lambda\in(L^*_i (\tilde{\xi}),+\infty)$. Then, denote by $v^i(t,x_{i+1},...,x_N)$ the solution of the $(N-i)$-dimensional periodic problem~\eqref{eqn:RD} in $\R^{N-i}$ with initial condition~$v_0^i(x_{i+1} /(\lambda \tilde{\xi}_{i+1}),...,x_N/(\lambda\tilde{\xi}_N))$. From our choice of~$\lambda$, we know by Theorem~\ref{th:periodic_pb} that~$v^i(t,\cdot)\to1$ uniformly in $\R^{N-i}$ as $t\to +\infty$. In particular, there is $t_0>0$ such that
$$v^i(t_0,\cdot)\ge\frac{1+\theta}{2}\ \hbox{ in }\R^{N-i}.$$
Let us now check that, for any $\xi\in S^{N-1}_+$ close enough to~$(0,...,0,\tilde{\xi})$, the solution $v_{\xi,\lambda}(t,x_1,...,x_N)$ of~\eqref{eqn:RD_per} also converges to $1$ as $t\to+\infty$, uniformly in $\R^N$. Indeed, since
$$v_{\xi,\lambda}(0,x)=v_0\Big(\frac{x_1}{\lambda\,\xi_1},...,\frac{x_N}{\lambda\,\xi_N}\Big),$$
it follows from the definitions of $v_0^i$ and $v^i$ that, as~$S^{N-1}_+\ni\xi=(\xi_1,...,\xi_N)\to(0,...,0,\tilde{\xi}_{i+1},...,\tilde{\xi}_N)$, there holds
$$\max_{(x_1,...,x_N)\in\R^N}\big|v_{\xi,\lambda}(t_0,x_1,...,x_N)-v^i(t_0,x_{i+1},...,x_N)\big|\to0.$$
Therefore, $1\ge v_{\xi,\lambda}(t_0,\cdot)\ge(1+3\theta)/4$ in $\R^N$ for $\|\xi-(0,...,0,\tilde{\xi})\|$ small enough. The maximum principle and assumption~\eqref{ass_f} imply that $v_{\xi,\lambda}(t,\cdot)\to1$ as $t\to+\infty$ and $\theta_{\xi,\lambda}=1$ for $\|\xi-(0,...,0,\tilde{\xi})\|$ small enough. Hence, by Theorem~\ref{th:periodic_pb}, $\lambda>L^*(\xi)$ for $\xi\in S^{N-1}_+$ close enough to $(0,...,0,\tilde{\xi})$, and
$$\limsup_{\xi\in S^{N-1}_+,\,\xi\to (0,...,0,\tilde{\xi})} L^* (\xi) \leq \lambda .$$
Recalling that $\lambda$ could be chosen arbitrary close to $L^*_i (\tilde{\xi})$, one infers that
$$\limsup_{\xi\in S^{N-1}_+,\,\xi\to (0,...,0,\tilde{\xi})} L^* (\xi) \leq L^*_i (\tilde{\xi}).$$
This property is also trivially satisfied if $L^*_i(\tilde{\xi})=+\infty$.

With similar arguments, one gets that $\liminf_{\xi\in S^{N-1}_+,\,\xi\to (0,...,0,\tilde{\xi})} L^* (\xi) \geq\lambda$ for all $0<\lambda< L^*_i (\tilde{\xi})$ (provided that $L^*_i(\tilde{\xi})>0$). Hence, $\liminf_{\xi\in S^{N-1}_+,\,\xi\to (0,...,0,\tilde{\xi})} L^* (\xi) \geq L^*_i (\tilde{\xi})$, whether $L^*_i(\tilde{\xi})$ be $+\infty$, or a positive real number, or $0$. The proof of Proposition~\ref{prop:hom} is thereby complete.\end{proof}


\section{Spreading properties}

We are now in a position to prove Theorems~\ref{th:spreading_ignition} and~\ref{th:critical_ignition}. We refer to the previous section and the proof of Theorem~\ref{th:periodic_pb} for the construction of the maps $\xi \mapsto L^* (\xi)$ and $\lambda \mapsto \theta_{\xi,\lambda}$, and only concern ourselves here with the spreading properties of~$u$.


\subsection{Proof of Theorem~\ref{th:spreading_ignition}}

\begin{proof}[Proof of Theorem~\ref{th:spreading_ignition}] We assume that $u$ is the solution of~\eqref{eqn:RD} where the initial condition~$u_0$ satisfies Assumptions~\ref{ass:ini1} and~\ref{ass:ini2} and $v_0$ satisfies~\eqref{eqn:cond_v0}. For convenience, we will denote $L = \lambda\,\xi = (L_1,...,L_N)$ the period vector in Assumption~\ref{ass:ini2}, with $\lambda=\|L\| $ and $\xi=\xi_L$.

First note that, for any~$\varepsilon >0$, there exists some $T_\varepsilon >0$ such that
$$\|v_{\xi,\lambda} (T_\varepsilon,\cdot) - \theta_{\xi,\lambda} \|_{L^\infty (\R^N)} < \frac{\varepsilon}{2}.$$
Since $f$ is Lipschitz-continuous in $[0,1]$ and both $u$ and $v_{\xi,\lambda}$ range in $[0,1]$, there holds
$$|u(t,x)-v_{\xi,\lambda}(t,x)|\le e^{Mt}w(t,x)\ \hbox{ for all }(t,x)\in(0,+\infty)\times\R^N,$$
where $M$ is the Lipschitz norm of $f$ in $[0,1]$ and $w$ is the solution of the heat equation $w_t=\Delta w$ in~$(0,+\infty)\times\R^N$ with initial condition
$$w(0,x)=|u_0(x)-v_{\xi,\lambda}(0,x)|=\Big|u_0(x)-v_0\Big(\frac{x_1}{L_1},...,\frac{x_N}{L_N}\Big)\Big|.$$
Since $w(0,x)\to0$ as $\|x\|\to+\infty$ from Assumption~\ref{ass:ini2}, there holds $w(t,x)\to0$ as $\|x\|\to+\infty$ for every $t>0$. Therefore, there is $R_\varepsilon>0$ such that $\|u(T_{\epsilon},\cdot)- v_{\xi,\lambda}(T_{\epsilon},\cdot) \|_{L^\infty (\R^N \setminus B_{R_\varepsilon} ) } < \varepsilon/2$, where $B_{R_\varepsilon}$ denotes the ball of center 0 and radius~$R_\varepsilon$. Hence,
$$\|u (T_\varepsilon ,\cdot) - \theta_{\xi,\lambda} \|_{L^\infty (\R^N \setminus B_{R_\varepsilon})} < \varepsilon .$$

Let us first consider the case $\lambda > L^* (\xi)$ (that is, $\theta_{\xi,\lambda}=1$). This corresponds to the case $\overline{v_0}\ge\theta$ (for which $L^*(\xi)=0$) and to the case $\overline{v_0}<\theta$ and $\|L\|>L^*(\xi_L)$. By choosing $0<\varepsilon<(1-\theta)/2$ and using the notations of the previous paragraph, one has $u(T_\varepsilon, \cdot)>(1 +\theta)/2$ outside the bounded domain~$B_{R_\varepsilon}$. It then easily follows, using for instance the classical spreading result from~\cite{AW} (in that case, spreading occurs from outside), that $u(t+T_\varepsilon,\cdot)$ uniformly converges to 1 as $t \to +\infty$.

Now, let us consider the case $0< \lambda < L^* (\xi)$, that is, $0<\overline{v_0}<\theta_{\xi,\lambda}<\theta$ from Theorem~\ref{th:periodic_pb}. Then, by choosing $\epsilon>0$ small enough so that $0<\theta_{\xi,\lambda}-\epsilon<\theta_{\xi,\lambda}+\epsilon<\theta$, the function $u$ satisfies
$$ \limsup_{\| x \| \to +\infty} u(T_\varepsilon,x) < \theta_{\xi,\lambda} + \varepsilon < \theta\ \hbox{ and }\ \liminf_{\| x \| \to +\infty} u(T_\varepsilon,x) > \theta_{\xi,\lambda} - \varepsilon >0.$$
Although $u(T_\varepsilon,\cdot)$ decays below the ignition threshold at infinity in all directions, the methods used in~\cite{HS} for front-like initial conditions still apply here (as long as Assumption~\ref{ass:ini1} holds) and will imply that~$u$ spreads to $1$ at least with speed $c^* (\theta_{\xi,\lambda} - \varepsilon)$, and at most with speed $c^* (\theta_{\xi,\lambda} + \varepsilon)$. Since our framework is slightly different from~\cite{HS}, we include a proof here for the sake of completeness. On the one hand, for all $x\in\R^N$,
$$u (T_\varepsilon,x) \leq \chi_{B_{R_{\varepsilon}}} (x) + (\theta_{\xi,\lambda} + \varepsilon ) \chi_{\R^N \setminus B_{R_\varepsilon}} (x)=:U_0(x).$$
The solution $U$ of~\eqref{eqn:RD} with initial condition $U_0$ converges locally uniformly to~1 (because it is above~$u(T_{\epsilon}+\cdot,\cdot)$). Hence, from~\cite{AW}, $U$ spreads from $\theta_{\xi,\lambda}+\varepsilon$ to $1$ with speed $c^* (\theta_{\xi,\lambda}+\varepsilon)$ in the sense of Definition~\ref{def:spread}. Therefore, by the comparison principle, for any $c > c^* (\theta_{\xi,\lambda} + \varepsilon)$,
\begin{equation}\label{eqn:spread_below}
\limsup_{t\to +\infty} \sup_{\|x\| \geq ct} u(t,x) \leq  \theta_{\xi,\lambda} + \varepsilon.
\end{equation}
On the other hand, as $\liminf_{\|x\| \rightarrow +\infty} u (T_\varepsilon, x) > \theta_{\xi,\lambda} - \varepsilon$ and since $u (t+T_\varepsilon,x)$ is a supersolution of the heat equation with initial condition $u(T_\varepsilon,\cdot)$, we get the existence of $T'_\varepsilon > T_\varepsilon$ such that
$$\inf_{\R^N} u (T'_\varepsilon,\cdot)> \theta_{\xi,\lambda} - \varepsilon.$$
Let us fix $R>0$ large enough so that the solution of~\eqref{eqn:RD} with initial condition $((1+\theta)/2)\chi_{B_R}$ converges to $1$ locally uniformly in $\R^N$ as~$t\to+\infty$, which is possible by~\cite{AW}. Since $u(t,\cdot)\to1$ as~$t\to+\infty$ locally uniformly in $\R^N$ and since the constant $\theta_{\xi,\lambda}-\epsilon\in(0,\theta)$ is a subsolution of~\eqref{eqn:RD}, there is $T''_{\epsilon}>T'_{\epsilon}$ such that, for all $x\in\R^N$,
$$u (T''_\varepsilon,x) \geq \frac{1+\theta}{2} \chi_{B_R}(x) + (\theta_{\xi,\lambda} - \varepsilon) \chi_{\R^N \setminus B_R }(x)=:\underline{u}_0(x).$$
By the maximum principle and the choice of $R>0$, the solution $\underline{u}$ of~\eqref{eqn:RD} with initial condition $\underline{u}_0$ still converges to $1$ as $t\to+\infty$ locally uniformly in $\R^N$. It is then known by~\cite{AW} that this solution $\underline{u}$ spreads from $\theta_{\xi,\lambda}-\varepsilon$ to~$1$ with speed $c^* (\theta_{\xi,\lambda} - \varepsilon)$. It follows from the comparison principle that for any~$c\in[0,c^* (\theta_{\xi,\lambda} - \varepsilon))$,
\begin{equation}\label{eqn:spread_above}
\lim_{t\to +\infty} \sup_{\|x\| \leq ct} | u(t,x) -1 | =0,
\end{equation}
and for any $c > c^* (\theta_{\xi,\lambda} - \varepsilon)$,
\begin{equation}\label{eqn:spread_below1}
\liminf_{t\to +\infty} \inf_{\|x\| \geq ct} u(t,x) \geq \theta_{\xi,\lambda} - \varepsilon.
\end{equation}
Putting~\eqref{eqn:spread_below},~\eqref{eqn:spread_above} and~\eqref{eqn:spread_below1} together and passing to the limit as $\varepsilon \to 0$ (remember that~$\alpha \in [0,\theta] \mapsto c^* (\alpha)$ is continuous), we conclude that $u$ spreads from~$\theta_{\xi,\lambda}$ to~1 with speed~$c^* (\theta_{\xi,\lambda})$ in the sense of Definition~\ref{def:spread}. The proof of Theorem~\ref{th:spreading_ignition} is thereby complete.
\end{proof}


\subsection{Proof of Theorem~\ref{th:critical_ignition}}\label{sec:proof_critical}

We now turn to the proof of Theorem~\ref{th:critical_ignition}. Namely, we assume that $\overline{v}_0<\theta$ and we consider the critical case $\lambda = \|L\| = L^* (\xi)$ with $\xi=\xi_L$ and an initial condition $u_0\in L^{\infty}(\R^N,[0,1])$ satisfying Assumptions~\ref{ass:ini1} and~\ref{ass:ini2}, as well as~\eqref{ass_u0v0}. The same reasoning as in the previous subsection immediately provides a lower estimate of the spreading speed. Namely,~$u$ spreads at least with speed $c^*(\theta)>0$ from~$\theta$ to 1:
$$\left\{\baa{ll}
\forall \, 0\le c < c^* (\theta), & \ds\lim_{t \rightarrow +\infty} \sup_{\|x\| \leq ct} |u(t,x) -1| = 0,\vspace{3pt}\\
\forall \, c > c^* (\theta), & \ds\liminf_{t \rightarrow +\infty} \inf_{\|x\| \geq ct} u(t,x) \geq \theta.\eaa\right.$$
Getting an upper spreading estimate is more intricate and depends on the convergence rate of the initial condition $u_0$ to the scaled periodic function $v_0$ as~$\|x\| \to +\infty$.

Let $c$ be an arbitrary speed such that $c>c^* (\theta)$. There then exists a planar traveling front~$U_c$ connecting~$\theta$ to 1 with speed~$c$, that is, a solution of~\eqref{eqn:front_eq} with $\alpha=\theta$. Moreover, it is known from~\cite{AW} that this traveling front satisfies that $U'_c <0$ and, in the case $f'(\theta^+) >0$, it has the following behavior:
\begin{equation}\label{eqn:asymp_front}
U_c (z)-\theta \sim B\,e^{-\lambda_- z}\ \hbox{ and }\ U'_c (z) \sim -B\,\lambda_-\,e^{-\lambda_- z}\ \hbox{ as }z \rightarrow +\infty,
\end{equation}
where $B$ is a positive constant and
\be\label{deflambda-}
\lambda_- = \frac{c - \sqrt{c^2 - 4 f' (\theta^+)}}{2}>0
\ee
is the smallest of the two roots of $\lambda^2 - c\lambda + f'(\theta^+)=0$. Those are well-defined because~$c^* (\theta) \geq 2\sqrt{f'(\theta^+)}$. On the other hand, when $f'(\theta^+) =0$, then $U_c$ decays to $\theta$ as $z \to +\infty$  more slowly than any exponential and
\begin{equation}\label{eqn:asymp_front_bis}
\lim _{z \to +\infty}\frac{U '_c (z)}{U_c (z)-\theta} = 0.
\end{equation}
In both cases, the exponential rate $\lambda_0$ of the convergence of $u_0(x)$ to $v_0(x_1/L_1,...,x_N/L_N)$ as~$\|x \| \to +\infty$ (actually of the upper bound of the difference, see the assumptions~\eqref{ass_u0v0} and~\eqref{deflambda*} in Theorem~\ref{th:critical_ignition}) is faster than the convergence rate of $U_c$ to $\theta$. Namely, $\lambda_0\ge\lambda^*>\lambda_->0$ if $f'(\theta^+)>0$, and $\lambda_0>0$ if~$f'(\theta^+)=0$.

The key-point of the argument will be devoted to the construction of appropriate supersolutions. Because of the oscillations of $u$ around $\theta$ as $\|x\| \to +\infty$, it is in general not possible to shift $U_c$ in any direction in order to put it above $u(t,\cdot)$, even for large time $t$. Before dealing with this issue, we slightly modify $U_c$ into a supersolution which is equal to $1$ behind the propagation (see also~\cite{roquejoffre1,roquejoffre2}).

\begin{claim}\label{claim:super_sol}
Let $c>c^*(\theta)$ be fixed and let us call $c^*=c^*(\theta)$ for the sake of simplicity. Then, there exist~$\nu >0$, $\eta_0>0$ and an increasing and bounded function $[0,+\infty)\ni t\mapsto\zeta(t)$, such that, for any~$\eta\in(0,\eta_0]$ and any unit vector $e\in S^{N-1}$, the function
\be\label{defUc}
(t,x)\mapsto\overline{U}_c (t,x;\eta,e) = \min \big\{ 1,U_c (x\cdot e  - ct  - \eta\,\zeta(t)) + \eta\, e^{-c^*(x\cdot e -ct)/2 - \nu t} \big\},
\ee
is a supersolution of~\eqref{eqn:RD} in $[0,+\infty)\times\R^N$.
\end{claim}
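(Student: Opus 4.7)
The plan is to compute the parabolic operator $\L[V]:=\partial_t V-\Delta V-f(V)$ applied to the smooth candidate $V(t,x):=U_c(\tilde z)+\psi(t,z)$, where $z:=x\cdot e-ct$, $\tilde z:=z-\eta\,\zeta(t)$ and $\psi(t,z):=\eta\,e^{-c^* z/2-\nu t}$, and then to invoke the classical fact that the pointwise minimum $\overline U_c=\min\{1,V\}$ of two (viscosity) supersolutions of~\eqref{eqn:RD} is itself a supersolution, using that the constant $1$ is a stationary solution thanks to $f(1)=0$. By the rotation-invariance of the Laplacian we may treat $x\cdot e$ as a scalar variable, so the problem is effectively one-dimensional.

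A direct differentiation, combined with the traveling front equation $U_c''+cU_c'+f(U_c)=0$, yields
\begin{equation*}
\L[V]=\bigl(f(U_c(\tilde z))-f(V)\bigr)-\eta\,\zeta'(t)\,U_c'(\tilde z)+\psi\,\Bigl(\frac{c^*c}{2}-\nu-\frac{(c^*)^2}{4}\Bigr).
\end{equation*}
Since $c>c^*\ge 2\sqrt{f'(\theta^+)}$, the quantity $c^*c/2-(c^*)^2/4-f'(\theta^+)$ is strictly positive, so I fix once and for all some $\nu\in(0,\,c^*c/2-(c^*)^2/4-f'(\theta^+))$. For any nondecreasing $\zeta$ the shift contribution $-\eta\,\zeta'(t)\,U_c'(\tilde z)$ is automatically nonnegative because $U_c'<0$, and the remaining work consists in absorbing the sign-indefinite term $f(U_c(\tilde z))-f(V)$ via a three-zone partition of $\tilde z$.

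I partition space according to the value of $U_c(\tilde z)$. In the far-ahead zone $\{U_c(\tilde z)\in[\theta,\theta+\rho/2]\}$, the $C^1$ smoothness of $f$ on $[\theta,\theta+\rho]$ given by~\eqref{ass_f} and a Taylor expansion at $\theta^+$ yield $f(V)-f(U_c(\tilde z))\le(f'(\theta^+)+\e)\psi$ with $\e\to 0$ as $\tilde z\to+\infty$; for $\tilde z$ large enough and $\eta_0\le\rho/2$, the choice of $\nu$ then leaves a strictly positive multiple of $\psi$ in $\L[V]$. In the far-behind zone $\{U_c(\tilde z)\in[1-\rho/2,1)\}$, the $C^1$ smoothness of $f$ on $[1-\rho,1]$ together with $f'(1)<0$ gives, after possibly shrinking $\rho$ and choosing $\eta_0$ small, $f(U_c(\tilde z))-f(V)\ge |f'(1)|\,\psi/3>0$, which combined with the nonnegativity of the $\psi$-term and of the shift closes that zone. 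The remaining intermediate zone $\{U_c(\tilde z)\in[\theta+\rho/2,1-\rho/2]\}$ corresponds to $\tilde z$ lying in some bounded interval $[Z_-,Z_+]$.

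The main obstacle is this intermediate zone, where $f(U_c)-f(V)$ has no definite sign and is only controlled by $|f(U_c)-f(V)|\le M\psi$ ($M$ being the Lipschitz constant of $f$ on $[0,1]$), so the shift must be genuinely activated to close the inequality. Since $\tilde z\in[Z_-,Z_+]$ forces both $\psi\le C\eta\,e^{-\nu t}$ and $|U_c'(\tilde z)|\ge m>0$, the supersolution inequality reduces to something of the form $\eta\,\zeta'(t)\,m\ge C'\,\eta\,e^{-\nu t}$ for an explicit constant $C'=C'(c,\nu,M,m)$. This dictates the choice $\zeta(t):=K(1-e^{-\nu t})$ with $K$ large enough: then $\zeta'(t)=K\nu\,e^{-\nu t}$ and the inequality holds uniformly in $(t,x)$, while $\zeta$ is manifestly increasing and bounded on $[0,+\infty)$ as required. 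Gathering the three zones yields $\L[V]\ge 0$ wherever $V<1$, for every $\eta\in(0,\eta_0]$ and every $e\in S^{N-1}$, and taking the minimum with~$1$ produces the sought supersolution.
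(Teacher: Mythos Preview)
Your proof is correct and follows essentially the same approach as the paper's: the same three-zone decomposition of $\tilde z$ according to the value of $U_c(\tilde z)$, the same choice of $\nu$ in the interval $(0,\,c^*c/2-(c^*)^2/4-f'(\theta^+))$ (the paper takes the midpoint), and the same ansatz $\zeta(t)=K(1-e^{-\nu t})$ with $K$ determined by the intermediate zone where $|U_c'|$ is bounded below. The only cosmetic differences are that the paper fixes a specific $\nu$ so that the far-ahead zone yields exactly zero margin after absorbing $f'(\theta^+)$, and that in the far-behind zone the paper uses merely $f(s_2)\le f(s_1)$ rather than your sharper quantitative bound; neither affects the argument.
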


\begin{remark}{\rm 
There are some simpler ways to construct supersolutions which would still be suitable for our purpose. For instance, one may check that the function
$$\min\big\{1,U_c (x\cdot e - c't) + \eta\,e^{-c^*(x\cdot e - c' t)/2}\big\},$$
with $c^* (\theta) < c < c' $ and the right choice of parameters, also defines a supersolution of~\eqref{eqn:RD}. However, our choice~\eqref{defUc} is motivated by the fact that we will use in Section~$\ref{sec:CV_profile}$ a family of supersolutions which is similar to~\eqref{defUc}.}
\end{remark}

\begin{proof}[Proof of Claim~$\ref{claim:super_sol}$]
We first choose
\be\label{defnu}
\nu = \displaystyle -\frac{c^{*2} - 2 c c^* + 4 f'(\theta^+)}{8}
\ee
and observe that $\nu >0 $ since $c > c^*\geq 2 \sqrt{f'(\theta^+)}$ and $c^*>0$. We also fix $\eta_0\in(0,(1-\theta)/2)$ such that
\begin{equation}\label{eqn:right2}
\forall \,\theta \leq s_1 \leq s_2 \leq \theta + 2\,\eta_0 , \quad
f (s_2) - f(s_1) - f' (\theta^+ ) (s_2 - s_1) \leq \nu (s_2 - s_1),
\end{equation}
and, as $f'(1) <0$,
\begin{equation}\label{eqn:left2}
\forall \,1- \eta_0 \leq s_1 \leq s_2 \leq 1 , \quad f(s_2) \leq f (s_1).
\end{equation}
Next, there exist $z_2<0<z_1$ such that
\be\label{defz12}\left\{\baa{ll}
\forall\,z \geq z_1 , & U_c (z) \leq \theta + \eta_0,\vspace{3pt}\\
\forall\,z \leq z_2 , & U_c (z) \geq 1 - \eta_0.\eaa\right.
\ee
As~$U_c$ is of class $C^1(\R)$ with $U_c'<0$ in $\R$, there exists $\kappa > 0$ such that
$$-U_c ' (z) \geq \kappa\ \mbox{ for any }  z \in [z_2, z_1].$$
We now define
$$\zeta (t) = \left( 1 -  e^{-\nu t} \right) \times \frac{4\nu + c^{*2} - 2cc^* + 4K}{4\kappa \nu} e^{-c^*z_2/2} ,$$
where $K$ is larger than the Lipschitz norm of $f$, as well as large enough so that
$$\zeta(t) \geq 0\ \hbox{ and }\ \zeta' (t)>0\hbox{ for all }t\ge0.$$ 

Until the end of the proof of Claim~\ref{claim:super_sol}, $\eta$ denotes an arbitrary real number in the interval $(0, \eta_0]$ and $e$ any vector of the unit sphere $S^{N-1}$. Let $\overline{U}_c(\cdot,\cdot;\eta,e)$ be defined by~\eqref{defUc}. For all $t\ge0$ and~$x \cdot e \geq ct + \eta\,\zeta(t) + z_1$, we have $x\cdot e-ct\ge0$ and
$$\theta \leq \overline{U}_c (t,x;\eta,e) \leq U_c (x \cdot e-ct - \eta\,\zeta(t)) + \eta\,e^{-c^*(x \cdot e -ct)/2 - \nu t } \leq \theta + 2\,\eta_0<1 $$
by~\eqref{defz12}. Hence, for any such $(t,x)$,~\eqref{defnu} and~\eqref{eqn:right2} yield
\begin{eqnarray*}
\partial_t \overline{U}_c \!-\! \Delta \overline{U}_c \!-\! f(\overline{U}_c) & \!\!\!\geq\!\!\! & - [ U_c '' \!+\!c U_c ' \!+\! f( U_c) ]  \!-\!  \eta\,\zeta ' (t)\,U_c ' \!-\! \eta  \Big(2 \nu \!+\! \frac{{c^*}^2}{4} \!-\! \frac{cc^*}{2} \!+\! f' (\theta^+) \Big) e^{-c^*(x\cdot e \!-\!ct)/2 \!-\! \nu t } \vspace{3pt}\\
& \!\!\!=\!\!\! & -\eta\,\zeta'(t)\,U_c'\vspace{3pt}\\
& \!\!\!\geq\!\!\! & 0,
\end{eqnarray*}
where $\overline{U}_c(\cdot,\cdot;\eta,e)$ and its derivatives are evaluated at $(t,x)$, while $U_c$ and its derivatives are evaluated at $x\cdot e -ct  -\eta\,\zeta(t)$.

Now consider the case where $t\ge0$ and $x \cdot e \leq ct + \eta\,\zeta(t) + z_2$. As $1$ is a trivial solution of~\eqref{eqn:RD}, it is sufficient to deal with the subcase $\overline{U}_c(t,x;\eta,e) < 1$. Then, using~\eqref{eqn:left2} and~\eqref{defz12}, we get that~$f\big(\overline{U}_c(x\cdot e -ct  -\eta\,\zeta(t))\big)\le f\big(U_c(x\cdot e -ct  -\eta\,\zeta(t))\big)$. With~\eqref{defnu} and $c>c^*>0$, it follows that
$$\partial_t \overline{U}_c - \Delta \overline{U}_c - f(\overline{U}_c)\geq- [ U_c '' +c U_c ' + f( U_c) ]  -  \eta\,\zeta ' (t)\,U_c ' - \eta \Big( \nu + \frac{{c^*}^2}{4} -\frac{cc^*}{2} \Big) e^{-c^*(x\cdot e -ct)/2 - \nu t }\geq0.$$

Lastly, for all $t\ge0$ and $x \in \R^N$ such that $z_2 \leq x \cdot e - ct - \eta\,\zeta(t) \leq z_1$ and $\overline{U}_c(t,x;\eta,e) < 1$, there holds
\begin{eqnarray*}
\partial_t \overline{U}_c - \Delta \overline{U}_c - f(\overline{U}_c)  & \geq & - [ U_c '' +c U_c ' + f( U_c) ]  - \eta\,\zeta ' (t)\,U_c ' - \eta \Big( \nu + \frac{{c^*}^2}{4} - \frac{cc^*}{2} + K \Big) e^{-c^*(x\cdot e -ct)/2 - \nu t } \vspace{3pt} \\
& \geq & \eta\,\zeta ' (t)\, \kappa - \eta \Big(\nu + \frac{{c^*}^2}{4}- \frac{cc^*}{2} + K  \Big) e^{-c^*z_2/2 -\nu t},
\end{eqnarray*}
which is nonnegative from our choice of the function $\zeta(t)$.
 
We conclude that, for every $\eta\in(0,\eta_0]$ and $e\in S^{N-1}$, the function $\overline{U}_c(\cdot,\cdot;\eta,e)$ is indeed a supersolution of~\eqref{eqn:RD} in $[0,+\infty)\times\R^N$, with $\nu$, $\eta_0$ and~$\zeta$ satisfying all the required properties of Claim~\ref{claim:super_sol}.
\end{proof}\\

\begin{proof}[End of the proof of Theorem~$\ref{th:critical_ignition}$] The next issue in our construction of a supersolution of~\eqref{eqn:RD} which is above the solution $u$ is to deal with the oscillations of $u$ around $\theta$ ahead of the propagation. Because of these oscillations~$u$ may never lie below any shift of the function~$\overline{U}_c$ defined in Claim~\ref{claim:super_sol} (notice that~$\overline{U}_c(t,x;\eta,e)\to\theta$ as $x\cdot e\to+\infty$ for any $t\ge0$ and $\eta>0$). We recall that $c>c^*=c^*(\theta)$ is fixed, that $\lambda_0>0$ and $\lambda_0\ge\lambda^*$ from~\eqref{deflambda*}, and that $\lambda_->0$ is given by~\eqref{deflambda-} if $f'(\theta^+)>0$. Let us define
\be\label{deflambdac}
\lambda_c = \min \left\{ \frac{c}{2}, \lambda_0 \right\}>0.
\ee
The real number $\lambda_c$ satisfies $0\le\lambda^*\le\lambda_c\le c/2$. Furthermore, $0<\lambda_-<\lambda^*\le\lambda_c\le c/2$ if $f'(\theta^+)>0$. Hence, $\lambda_c^2 - c \lambda_c - f' (\theta^+) < 0$ whether $f'(\theta^+)$ be positive or equal to $0$. Then, there exists $\delta\in(0,\theta)$ such that $\theta+2\delta<1$ and
$$\forall\, (v,w) \in[\theta -\delta, \theta + \delta] \times [0,\delta], \ \ f (v+w) -f(v) -f' (\theta^+)w \leq \frac{| \lambda_c^2 - c \lambda_c - f' (\theta^+) | }{2}\,w.$$
We also recall that $A$ is a positive constant given by the assumption~\eqref{ass_u0v0} of Theorem~\ref{th:critical_ignition}.

\smallbreak
{\it Step 1: auxiliary conditional supersolutions.} Let us now check that for any unit vector~$e\in S^{N-1}$, any $\tau>0$ and any $X \in \R$, the function 
\begin{equation}\label{defoverw}
(t,x)\mapsto\overline{w}(t,x;\tau,X,e) := v_{\xi,\lambda} (t+\tau,x)+ A\,e^{-\lambda_c (x\cdot e -ct -X) }
\end{equation}
is a supersolution of~\eqref{eqn:RD} whenever $(t,x)\in[0,+\infty)\times\R^N$ and
$$(v_{\xi,\lambda} (t+\tau,x), A\,e^{-\lambda_c (x\cdot e -ct -X)}) \in [\theta -\delta, \theta +\delta] \times[0,\delta].$$
In such a case, one has $\overline{w}(t,x;\tau,X,e)\in[\theta-\delta,\theta+2\delta]\subset[0,1]$ and one can indeed compute that:
\begin{equation}\label{eqoverw}\begin{array}{rcl}
\partial_t \overline{w}\!-\!\Delta\overline{w}\!-\!f(\overline{w}) & \!\!\!\!=\!\!\!\! & \left[ \partial_t v_{\xi,\lambda} - \Delta v_{\xi,\lambda} - f(v_{\xi,\lambda})  \right] - \left[ \lambda_c^2 - c\lambda_c + f' (\theta^+) \right] A\,e^{-\lambda_c (x\cdot e -ct-X)} \vspace{3pt}\\
& \!\!\!\!\!\!\!\! & + \left[ f(v_{\xi,\lambda}) + f' (\theta^+) A\,e^{-\lambda_c (x \cdot e -ct-X)} -  f (v_{\xi,\lambda} + A\,e^{-\lambda_c (x\cdot e-ct-X)}) \right]\vspace{3pt}\\
& \!\!\!\!\geq\!\!\!\! & \ds-[ \lambda_c^2\!-\!c\lambda_c\!+\!f' (\theta^+)] A e^{-\lambda_c (x\cdot e -ct-X)} - \frac{| \lambda_c^2\!-\!c \lambda_c\!-\!f' (\theta^+) | }{2}Ae^{-\lambda_c (x \cdot e\!-\!ct\!-\!X)}\vspace{3pt}\\
& \!\!\!\!=\!\!\!\! & \ds\frac{| \lambda_c^2 - c \lambda_c - f' (\theta^+) | }{2}\,A\,e^{-\lambda_c (x \cdot e -ct-X)}\vspace{3pt}\\
& \!\!\!\!\geq\!\!\!\! & 0,
\end{array}
\end{equation}
where $\overline{w}(\cdot,\cdot;\tau,X,e)$ and its derivatives are evaluated at $(t,x)$, while $v_{\xi,\lambda}$ and its derivatives are evaluated at $(t+\tau,x)$. 

\smallbreak
{\it Step 2: a generalized supersolution in $[0,+\infty)\times\R^N$.} Both families of functions~$\overline{U}_c (t,x;\eta,e)$ and~$\overline{w} (t,x;\tau,X,e)$ move with speed~$c$ in the direction $e$ (up to the space-periodic function $v_{\xi,\lambda}$ for~$\overline{w}$), and we shall use those two functions to construct a supersolution of~\eqref{eqn:RD} bounding~$u$ from above in the whole space.

Firstly, up to reducing $\delta>0$ and increasing~$A>0$, recalling the asymptotics~\eqref{eqn:asymp_front} or~\eqref{eqn:asymp_front_bis} of~$U_c$ and noting that $\lambda_c > \lambda_->0$ in case $f'(\theta^+)>0$ (and $\lambda_c>0$ if $f'(\theta^+)=0$), we can assume without loss of generality that the function $\theta + A\,e^{-\lambda_c z}$ intersects~$U_c (z)$ at a unique~$z_0 \in \R$ such that 
\begin{equation}\label{eqn:claim31a}\left\{\baa{l}
U_c (z_0)- \theta = A\,e^{-\lambda_c z_0} = \ds\frac{\delta}{2},\vspace{3pt}\\
\theta + A\,e^{-\lambda_c z}> U_c (z) \mbox{ for all } z<z_0,\vspace{3pt}\\
\theta + A\,e^{-\lambda_c z} < U_c (z) \mbox{ for all } z>z_0 .\eaa\right.
\end{equation}
Roughly speaking, this means that the function $ \theta + A\,e^{-\lambda_c z}$ is steeper than $U_c$. From the previous inequalities, let us fix $\epsilon>0$ such that
\be\label{defepsilon}
0<\epsilon\le\delta,\ U_c\big(z_0-\frac{\ln 2}{\lambda_c}\big)+\epsilon<\theta+2\,A\,e^{-\lambda_cz_0}-\epsilon\hbox{ and }\theta+A\,e^{-\lambda_c(z_0+1)}+\epsilon<U_c(z_0+1)-\epsilon.
\ee

Secondly, recall that $v_{\xi,\lambda} (\tau,\cdot)$ converges to $\theta$ uniformly in $\R^N$ as $\tau \to +\infty$. As a consequence, there is $\tau_0>0$ such that, for all $\tau\ge\tau_0$ and $(t,x)\in[0,+\infty)\times\R^N$,
\be\label{deftau0}
v_{\xi,\lambda} (t+\tau,x) \in [\theta - \varepsilon , \theta + \varepsilon]\,\subset\,[\theta-\delta,\theta+\delta].
\ee
Furthermore, since $U_c(-\infty)=1$ and the function~$\zeta$ in Claim~\ref{claim:super_sol} is bounded, it is also clear that~$\overline{U}_c(t,x;\eta,e)$ converges to $U_c(x\cdot e-ct)$ as the parameter $\eta \to 0$ uniformly with respect to~$(t,x)\in[0,+\infty)\times\R^N$ and $e\in S^{N-1}$. Therefore, there exists~$\eta\in(0,\eta_0]$ such that, for all $e\in S^{N-1}$ and $(t,x)\in[0,+\infty)\times\R^N$,
\be\label{deftau0bis}
\big| \overline{U}_c (t,x;\eta,e) - U_c  (x \cdot e - ct)\big| \leq \varepsilon.
\ee
By using~\eqref{defoverw}-\eqref{deftau0}, one infers that, for any $\tau\ge\tau_0$, $X\in\R$, $e\in S^{N-1}$ and $(t,x)\in[0,+\infty)\times\R^N$,
$$\begin{array}{rcl}
\ds x\cdot e > z_0 +X - \frac{\ln 2}{\lambda_c} + ct & \Longrightarrow & (v_{\xi,\lambda} (t+\tau,x),A\,e^{-\lambda_c (x\cdot e-ct-X)}) \in [\theta -\delta, \theta + \delta] \times [0,\delta]\vspace{3pt}\\
& \Longrightarrow & \partial_t\overline{w}(t,x;\tau,X,e)-\Delta\overline{w}(t,x;\tau,X,e)-f(\overline{w}(t,x;\tau,X,e))\ge0\end{array}$$
and
$$\left\{\baa{lcl}
\ds x\cdot e = z_0 +X - \frac{\ln 2}{\lambda_c} + ct  & \Longrightarrow & \overline{U}_c (t,x-X e;\eta,e)<\overline{w} (t,x;\tau,X,e),\vspace{3pt}\\
x\cdot e= z_0 +X +1 + ct  & \Longrightarrow &  \overline{w} (t,x;\tau,X,e) < \overline{U}_c (t,x-X e;\eta,e).\eaa\right.$$

From the previous paragraph and Claim~\ref{claim:super_sol}, it follows that, for any $\tau\ge\tau_0$, $X\in\R$ and $e\in S^{N-1}$, the following function $\overline{u}(\cdot,\cdot;\tau,X,e)$ defined in $[0,+\infty)\times\R^N$ by
$$\overline{u} (t,x;\tau,X,e) =\left\{
\begin{array}{ll}
\overline{U}_c (t,x-X e;\eta,e) & \!\!\ds\mbox{if } x\cdot e - ct \leq z_0+X - \frac{\ln 2}{\lambda_c}, \vspace{3pt}\\
\min \big\{ \overline{U}_c (t,x-Xe;\eta,e), \overline{w} (t,x;\tau,X,e) \big\} & \!\!\ds\mbox{if } z_0 \!+\!X\!-\!\frac{\ln 2}{\lambda_c}\leq x \cdot e\!-\!ct \leq z_0\!+\!X\!+\!1,\vspace{3pt}\\
\overline{w} (t,x;\tau,X,e) & \!\!\mbox{if } x\cdot e -ct \geq z_0 +X +1,
\end{array}
\right.$$
is a generalized supersolution of equation~\eqref{eqn:RD} in $[0,+\infty)\times\R^N$.

\smallbreak
{\it Step 3: conclusion.} In order to complete the proof, it only remains to check that there exist a large time~$\tau>0$ and a shift $X\in\R$ such that, under our assumptions, $u (\tau,\cdot)$ lies below the supersolution~$\overline{u} (0,\cdot;\tau,X,e)$ for any~$e \in S^{N-1}$.

We first claim that, for any $e\in S^{N-1}$ and $(t,x)\in[0,+\infty)\times\R^N$,
\begin{equation}\label{eqn:exp_decay}
u (t,x) \leq \min \big\{ 1,  v_{\xi,\lambda} (t,x)  + A\,e^{-\lambda_c  (\|x\|- \overline{c} t)} \big\},
\end{equation}
where $\overline{c} = \lambda_c+M/\lambda_c$ and~$M$ denotes the Lipschitz norm of $f$ in $[0,1]$. This comes from the fact that the function $w:=u - v_{\xi,\lambda}$ satisfies the linear equation
$$\partial_t w = \Delta w +\frac{f(u)-f(v_{\xi,\lambda})}{u-v_{\xi,\lambda}}\,w,$$
(with the convention that the quotient is, say, $0$ if the denominator vanishes) of which $A\,e^{-\lambda_c (x \cdot e - \overline{c} t)}$ is a supersolution, for any unit vector~$e\in S^{N-1}$. By assumption~\eqref{ass_u0v0} and definition~\eqref{deflambdac}, the initial condition~$w (0,x) = u_0(x) - v_0(x_1/L_1,...,x_N/L_N)$ lies below this supersolution. Thus, the maximum principle yields
$$u (t,x) - v_{\xi,\lambda} (t,x) \leq A\,e^{- \lambda_c (x \cdot e - \overline{c} t)} $$
for all $e\in S^{N-1}$ and $(t,x)\in[0,+\infty)\times\R^N$. It follows that $u(t,x) \leq v_{\xi,\lambda} (t,x) +  A\,e^{-\lambda_c ( \|x \|- \overline{c} t)}$ for all~$(t,x)\in[0,+\infty)\times\R^N$. Moreover, $u(t,x) \leq 1$ is an immediate consequence of the comparison principle, which ends the proof of the claim~\eqref{eqn:exp_decay}.

Next, it is clear from the definition of $\overline{U}_c$ in Claim~\ref{claim:super_sol} that
$$\inf_{e \in S^{N-1}} \inf_{ x \cdot e \leq  z_0 +1} \overline{U}_c (0,x;\eta,e) > \theta,$$
and that there exists $z_1\in\R$ such that, for all $e\in S^{N-1}$ and $x \cdot e \leq z_1$, there holds $ \overline{U}_c  (0,x;\eta,e) = 1$. Putting those two facts together with~\eqref{eqn:exp_decay} and $\lim_{t\to+\infty}\|v_{\xi,\lambda}(t,\cdot)-\theta\|_{L^{\infty}(\R^N)}=0$, one can find~$\tau>0$ and $X>0$ large enough such that
\begin{equation*}
u(\tau,x) \leq \overline{U}_c (0,x-Xe;\eta,e)\ \hbox{ for all }e \in S^{N-1}\hbox{ and }x \cdot e \leq  z_0 + X +1.
\end{equation*}
Furthermore, up to increasing~$X$, it is also clear from~\eqref{defoverw} and~\eqref{eqn:exp_decay} that
\begin{equation*}
u (\tau,x) \leq \overline{w} (0,x;\tau,X,e)\ \hbox{ for all }e \in S^{N-1}\hbox{ and }x\in\R^N.
\end{equation*}

It now follows from the last two properties that, for all $e \in S^{N-1}$ and $x \in \R^N$,
$$u (\tau,x) \leq \overline{u} (0,x; \tau,X,e).$$
By Step~2 and the maximum principle, we get that $u(t+\tau,x)\le\overline{u}(t,x;\tau,X,e)$ for all~$(t,x)\in[0,+\infty)\times\R^N$ and $e\in S^{N-1}$. Hence, for any speed $c' >c$,
$$\ds\sup_{x \cdot e \geq c' t} u (t+ \tau,x) \leq \sup_{x \cdot e \geq c' t } \overline{u} (t,x;\tau,X,e)\leq\ds\sup_{x \cdot e \geq c' t } \overline{w} (t,x;\tau,X,e)\leq\| v_{\xi,\lambda} (t + \tau,\cdot) \|_{L^{\infty}(\R^N)} + A\,e^{-\lambda_c((c'-c) t -X) },$$
where the second inequality holds for $t>0$ large enough so that $c't\ge c t+ z_0 + X+1$. Recalling that the choice of~$z_0$ and $X$ did not depend on the direction~$e \in S^{N-1}$, we conclude that
$$\limsup_{t \to +\infty} \sup_{ \| x\| \geq c' t } u (t, x) \leq \theta$$
for every $c'>c$. As~$c$ could be chosen arbitrarily close to $c^*=c^* (\theta)$, we have shown that $u$ spreads from $\theta$ to 1 with at most the speed $c^* (\theta)$, which ends the proof of Theorem~\ref{th:critical_ignition}.
\end{proof}

\begin{remark}{\rm 
The arguments used in the proofs of Theorems~\ref{th:spreading_ignition} and~\ref{th:critical_ignition} also prove that, if
$$\lim_{t \to +\infty} \liminf_{ \|x\| \to +\infty} u(t,x) = \lim_{t \to +\infty} \limsup_{\|x\| \to +\infty} u(t,x) = \alpha$$
for some $\alpha \in [0,\theta]$, together with some additional exponential convergence in the case $\alpha = \theta$, then~$u$ spreads at the speed $c^* (\alpha)$ from $\alpha$ to $1$.}
\end{remark}


\section{Convergence of the profile}\label{sec:CV_profile}

In this section, we prove our last main result, namely Theorem~\ref{th:CV_profile}. The proof follows a similar approach as in for instance~\cite{Fife}, as it first consists in `trapping' the large-time behavior of the solution between two shifts of a same traveling front connecting $\theta_{\xi_L,\|L\|}$ to 1. As in~\cite{Fife}, this relies on an appropriate construction of super and subsolutions. 

Here the main difficulty is again to deal with the oscillations of the solution around the invaded state, so that the argument below will share some similarities with the proof of Theorem~\ref{th:critical_ignition}, which was performed in Section~\ref{sec:proof_critical}. Once our solution is `trapped' between two shifts of the traveling front connecting $\theta_{\xi_L,\|L\|}$ to 1, we will be able to conclude thanks to the results of Berestycki and Hamel~\cite{BH2} concerning the so-called generalized transition fronts.


\subsection{Sub and supersolutions}\label{sec:CV_step1}

We place ourselves under the assumptions of Theorem~\ref{th:CV_profile}. From now on, the direction $e \in S^{N-1}$ is fixed. For convenience, we will again denote by $L = \lambda \xi = (L_1,...,L_N)$ the asymptotic period vector of the initial condition $u_0$ in the direction~$e$ (in the sense of Assumption~\ref{ass:front_like}). In particular, as we assume that $\lambda =\|L \| < L^* (\xi)$, the solution $v_{\xi,\lambda}$ of~\eqref{eqn:RD_per} converges uniformly in large-time to $\theta_{\xi,\lambda}\in(0,\theta)$. Recalling that the convergence rate of $v_{\xi,\lambda}$ to $\theta_{\xi,\lambda}$ is exponential (see Theorem~\ref{th:periodic_pb}), we also introduce some~$C_1>0$ and $\mu_1 >0$ such that, for all $t \geq 0$ and $x \in \R^N$, 
\begin{equation}\label{eqn:CV_per}
|v_{\xi, \lambda} (t,x) - \theta_{\xi , \lambda } | \leq C_1 e^{-\mu_1 t}.
\end{equation}
Next, we note that in the assumptions of Theorem~\ref{th:CV_profile}, we can increase $A>0$ and decrease $\lambda_0>0$ without loss of generality so that~\eqref{u0v0} still holds, together with
\begin{equation}\label{eqn:lambda}
0 < \lambda_0 < c^* (0),
\end{equation}
where we recall that $c^*(0)$ is the unique speed of a traveling front connecting $0$ to $1$. This implies that~$\lambda '\, ^2  - c\lambda ' <0$ for all $c \geq c^* (0)$ and $0 < \lambda ' \leq \lambda_0$. In particular, the exponential function $e^{-\lambda ' (x-ct)}$ is then a supersolution of the heat equation. As the heat equation is also satisfied by the solution around the invaded state, such exponentials will be crucial below.

Lastly, before we begin our construction of appropriate super and subsolutions, we recall that for every~$\alpha\in[0,\theta)$, $U_\alpha$ denotes the unique (up to some shift) traveling front connecting $\alpha$ to $1$ and solving~\eqref{eqn:front_eq}. It is known from~\cite{AW} that $U'_\alpha < 0$ in $\R$ and that
\begin{equation}\label{eqn:exp_behav43}
U_\alpha (z)-\alpha \sim B\,e^{-c^* (\alpha) z} , \ U ' _\alpha (z) \sim - B\,c^* (\alpha)\,e^{-c^* (\alpha) z},
\end{equation}
as $z \to +\infty$, for some constant $B>0$.

\subsubsection*{Sub and supersolutions $\underline{U}$ and $\overline{U}$ which are close to $U_{\theta_{\xi,\lambda}}(x\cdot e-c^*(\theta_{\xi,\lambda})t)$ for large $x\cdot e$}

The following claim is a standard construction of sub and supersolutions which converge to (a shift of)~$U_\alpha(x\cdot e-c^*(\alpha)t)$ as $t \to +\infty$. Again, the novelty of our proof will be to force our sub and supersolutions to oscillate around the invaded state, in order to be able to compare it with the solution~$u$.

\begin{claim}\label{claim:super_solb}
Let $\alpha \in (0,\theta)$ be fixed. There exist $\nu >0$, $\eta_0 >0$ and an increasing and bounded function~$[0,+\infty)\ni t\mapsto\zeta(t)$ such that, for any $\eta\in(0,\eta_0]$, the functions
$$\baa{rcl}
(t,x)\mapsto\overline{U}_\alpha (t,x;\eta) & \!\!\!=\!\!\! & \min \{ 1, U_\alpha (x\cdot e  - c^* (\alpha ) t  - \eta\,\zeta(t)) + \eta\,e^{- \lambda_0 (x\cdot e -c^* (\alpha) t )/2 - \nu t} \},\vspace{3pt}\\
(t,x)\mapsto\underline{U}_\alpha (t,x;\eta) & \!\!\!=\!\!\! & \max\big\{0,U_\alpha (x\cdot e  - c^* (\alpha) t  + \eta\,\zeta(t)) - \eta\,e^{-\nu t} \min \{ 1, e^{- \lambda_0(x\cdot e -c^* (\alpha)t + \eta\,\zeta(+\infty))/2}\}\big\} ,\eaa$$
are, respectively, a supersolution and a subsolution of~\eqref{eqn:RD} in $[0,+\infty)\times\R^N$.
\end{claim}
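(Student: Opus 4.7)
Both the supersolution and the subsolution are built by perturbing a shifted copy of the traveling front by a small exponentially decaying correction, exactly as in Claim~\ref{claim:super_sol}. Two features change the analysis. First, $\alpha\in(\overline{v_0},\theta)$ lies strictly below $\theta$, so $f\equiv 0$ on a neighborhood of $\alpha$: no linearized reaction term must be absorbed, and the correction only has to beat a heat operator with drift $c^*(\alpha)$. Second, a subsolution must also be constructed, and its correction has to decay spatially ahead of the front (through the factor $\min\{1,e^{-\lambda_0(\dots)/2}\}$); a purely constant perturbation there would destroy the subsolution inequality.

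\textbf{Parameters.} Choose $\nu>0$ so small that
$$\tfrac{\lambda_0 c^*(\alpha)}{2}-\nu-\tfrac{\lambda_0^2}{4}>0\quad\text{and}\quad\nu<\kappa_1,$$
where $\kappa_1>0$ is given by $f'(1)<0$ through the Lipschitz inequality $f(s_1)-f(s_2)\ge\kappa_1(s_2-s_1)$ on $[1-2\eta_0,1]$. The first bound is possible thanks to~\eqref{eqn:lambda} and the monotonicity $c^*(\alpha)>c^*(0)>\lambda_0$. Pick $\eta_0\in(0,\min\{\alpha,(1-\theta)/2\})$ with $\alpha+2\eta_0<\theta$ and such that the Lipschitz estimate above holds (recall $\alpha>\overline{v_0}>0$ by~\eqref{eqn:cond_v0}). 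Fix $z_2<0<z_1$ with $U_\alpha(z_1)=\alpha+\eta_0$, $U_\alpha(z_2)=1-\eta_0$ and set $\kappa=\min_{[z_2,z_1]}|U_\alpha'|>0$. Finally take $\zeta(t)=(1-e^{-\nu t})Z$ with $Z>0$ large enough that, uniformly in $\eta\in(0,\eta_0]$ and $t\ge 0$, the quantity $\eta\kappa\zeta'(t)$ dominates the Lipschitz-type error that appears in the compact transition region $z\in[z_2,z_1]$.

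\textbf{Supersolution.} Writing $z'=x\cdot e-c^*(\alpha)t-\eta\zeta(t)$ and $E=\eta e^{-\lambda_0(x\cdot e-c^*(\alpha)t)/2-\nu t}$, direct differentiation via~\eqref{eqn:front_eq} gives, where the minimum with $1$ is inactive,
$$\partial_t\overline{U}_\alpha-\Delta\overline{U}_\alpha-f(\overline{U}_\alpha)=-\eta\zeta'(t)U_\alpha'(z')+E\Bigl(\tfrac{\lambda_0c^*(\alpha)}{2}-\nu-\tfrac{\lambda_0^2}{4}\Bigr)-\bigl[f(U_\alpha(z')+E)-f(U_\alpha(z'))\bigr].$$
For $z'\ge z_1$ both $U_\alpha(z')$ and $U_\alpha(z')+E$ lie in $[0,\theta]$, so the bracket vanishes; for $z'\le z_2$ the bracket is nonpositive since $f$ is nonincreasing on $[1-2\eta_0,1]$; and on $[z_2,z_1]$ the positive contribution $\eta\kappa\zeta'(t)$ absorbs the remaining Lipschitz error by the choice of $Z$. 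The minimum with the stationary solution $1$ preserves the supersolution property.

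\textbf{Subsolution and main obstacle.} Using $-\min\{a,b\}=\max\{-a,-b\}$, rewrite
$$\underline{U}_\alpha(t,x;\eta)=\max\!\bigl\{0,\,U_\alpha(z'')-\eta e^{-\nu t},\,U_\alpha(z'')-\eta e^{-\nu t}e^{-\lambda_0(x\cdot e-c^*(\alpha)t+\eta\zeta(+\infty))/2}\bigr\}$$
with $z''=x\cdot e-c^*(\alpha)t+\eta\zeta(t)$. The (viscosity) maximum of subsolutions is a subsolution and the constant $0$ is a trivial one, so it suffices to verify the two remaining candidates on the region where each realizes the maximum. The third candidate dominates precisely when $x\cdot e-c^*(\alpha)t\ge-\eta\zeta(+\infty)$, and its verification mirrors the supersolution computation (signs reversed). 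The second candidate dominates when $x\cdot e-c^*(\alpha)t\le-\eta\zeta(+\infty)$, which forces $z''\le\eta(\zeta(t)-\zeta(+\infty))\le 0<z_1$; thus the problematic ``ahead of the front'' subcase (where the spatially constant correction $\eta e^{-\nu t}$ could never be dominated by $\eta\zeta'(t)|U_\alpha'(z'')|$, since $|U_\alpha'|\to 0$ as $z''\to+\infty$) is automatically excluded, and the computation reduces to
$$[f(U_\alpha(z''))-f(U_\alpha(z'')-\eta e^{-\nu t})]+\nu\eta e^{-\nu t}+\eta\zeta'(t)U_\alpha'(z''),$$
which is $\le(-\kappa_1+\nu)\eta e^{-\nu t}+\eta\zeta'(t)U_\alpha'(z'')\le 0$ on $\{z''\le z_2\}$ by the choice $\nu<\kappa_1$, and is dominated by $\eta\kappa\zeta'(t)$ on $\{z_2\le z''\le z_1\}$. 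The main obstacle of the whole construction is precisely this exclusion: the switch point inside the minimum must be frozen at $\zeta(+\infty)$, not $\zeta(t)$, so that as $t$ grows the region where the spatially constant piece of the correction is active never reaches the transition window of $U_\alpha$, ensuring uniform-in-time domination through the single parameter $Z$.
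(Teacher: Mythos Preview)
Your argument is correct and follows the same three-region strategy as the paper's proof: choose $\nu$ so that the exponential correction is a strict supersolution of the heat equation with drift $c^*(\alpha)$, pick $\eta_0$ small enough that $[\alpha-2\eta_0,\alpha+2\eta_0]\subset[0,\theta]$ and $f$ is uniformly decreasing on $[1-2\eta_0,1]$, localize the derivative bound $|U_\alpha'|\ge\kappa$ on $[z_2,z_1]$, and absorb the Lipschitz error in the transition window via $\zeta'(t)=Z\nu e^{-\nu t}$ with $Z$ large. The paper makes the constants explicit ($\nu=\min\{-f'(1)/2,(2c^*(\alpha)\lambda_0-\lambda_0^2)/4\}$ and $Z=(K+\nu)\kappa^{-1}\nu^{-1}e^{-\lambda_0 z_2/2}$), whereas you leave $Z$ implicit; in particular, for the supersolution on $[z_2,z_1]$ one has $E\le \eta\,e^{-\lambda_0 z_2/2}e^{-\nu t}$, so the factor $e^{-\lambda_0 z_2/2}$ must enter $Z$ --- this is the only place your sketch is slightly underspecified.

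Your treatment of the subsolution is a mild reorganization of the paper's argument and arguably cleaner: you expand $-\min\{1,e^{-\lambda_0(\cdots)/2}\}$ as a $\max$, so that $\underline{U}_\alpha$ becomes the maximum of three global candidates, and then verify each candidate only on the set where it realizes the maximum. The paper instead does a direct case analysis on $z''$ and, inside each region, on which branch of the inner minimum is active; both lead to the same inequalities. Your remark on the role of the frozen shift $\eta\,\zeta(+\infty)$ is the genuine point: it guarantees that the ``constant'' candidate $U_\alpha(z'')-\eta e^{-\nu t}$ is only selected when $z''\le 0<z_1$, so one never has to dominate the bad term $+\nu\eta e^{-\nu t}$ in the region $z''\gg z_1$ where $|U_\alpha'|$ is arbitrarily small. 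The paper does not comment on this, but it is exactly why the exponent in the claim carries $\zeta(+\infty)$ rather than no shift.
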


\begin{remark}{\rm
Here and unlike in Claim~\ref{claim:super_sol}, the direction $e$ is fixed so that we omit it in our notations. However, one may check that all constants, throughout this section, could be chosen independently of~$e \in S^{N-1}$.}
\end{remark}

\begin{proof} The proof is similar to that of Claim~\ref{claim:super_sol} above. We choose
\be\label{defnu2}
\nu = \min \left\{ -\frac{f'(1)}{2}, \frac{ -\lambda_0^2 + 2 c^* (\alpha) \lambda_0 }{4} \right\} >0,
\ee
and $\eta_0\in(0,1)$ such that
$$\left\{\baa{l}
0 \leq \alpha - 2\,\eta_0 < \alpha + 2\,\eta_0 < \theta<1-2\,\eta_0<1,\vspace{3pt}\\
\ds\forall \ 1-  2\,\eta_0  \leq s_1 \leq s_2 \leq 1 , \quad f(s_2) \leq f (s_1) + \frac{f'(1)}{2} (s_2 - s_1) \leq f(s_1).\eaa\right.$$
We then let $z_2<0<z_1$ be such that
$$\left\{\baa{ll}
\forall\, z \geq z_1 , & U_\alpha (z) \leq \alpha + \eta_0,\vspace{3pt}\\
\forall\, z \leq z_2 , & U_\alpha (z) \geq 1 - \eta_0 ,\eaa\right.$$
and define 
$$ \kappa := \inf_{[z_2 , z_1]} -U_\alpha ' (z) >0 .$$
We now set 
$$\zeta(t) = \left( 1 -  e^{-\nu t} \right) \times \frac{ K + \nu }{\kappa \nu}\, e^{-\lambda_0z_2/2}$$
for $t\ge0$, where $K$ is a positive constant larger than the Lipschitz norm of $f$. The function $\zeta$ is nonnegative, increasing and bounded in $[0,+\infty)$. 

Let us now check that, with the choice of parameters above, the functions~$\overline{U}_\alpha(\cdot,\cdot;\eta)$ and $\underline{U}_\alpha(\cdot,\cdot;\eta)$ are respectively a super and a subsolution of~\eqref{eqn:RD}, for any fixed $\eta\in(0,\eta_0]$.

First, for all $t \ge0$ and $x \cdot e \geq c^* (\alpha)\, t + \eta\,\zeta(t) + z_1$, we have $x \cdot e-c^* (\alpha)\, t\ge0$ and
$$0<\overline{U}_\alpha (t,x;\eta) \leq U_\alpha (x \cdot e -c^* (\alpha)\, t - \eta\, \zeta(t)) + \eta\, e^{-\lambda_0(x \cdot e -c^* (\alpha) t )/2 - \nu t } \leq\alpha+2\,\eta_0\le\theta.$$
Thus, $\overline{U}_\alpha (t,x;\eta)=U_{\alpha}(x \cdot e -c^* (\alpha)\, t - \eta\, \zeta(t))+ \eta\, e^{-\lambda_0(x \cdot e -c^* (\alpha) t )/2-\nu t}$ and
\begin{eqnarray*}
\partial_t \overline{U}_\alpha - \Delta \overline{U}_\alpha - f(\overline{U}_\alpha) & \!\!\!=\!\!\! & \partial_t \overline{U}_\alpha - \Delta \overline{U}_\alpha  \vspace{3pt}\\
& \!\!\!\geq\!\!\! & - [ U_\alpha '' +c^* (\alpha) U_\alpha '  ]  -  \eta\,\zeta ' (t) U_\alpha'-\eta \left( \nu + \frac{\lambda_0^2}{4} - \frac{c^* (\alpha) \lambda_0}{2} \right) e^{-\lambda_0(x\cdot e -c^* (\alpha) t )/2 - \nu t } \vspace{3pt}\\
& \!\!\!=\!\!\! & -  \eta\,\zeta ' (t) U_\alpha'-\eta \left( \nu + \frac{\lambda_0^2}{4} - \frac{c^* (\alpha) \lambda_0}{2} \right) e^{-\lambda_0(x\cdot e -c^* (\alpha) t )/2 - \nu t }\vspace{3pt}\\
& \!\!\!\geq\!\!\! & 0,
\end{eqnarray*}
where $\overline{U}_\alpha(\cdot,\cdot;\eta)$ and its derivatives are evaluated at $(t,x)$, while $U_\alpha$ and its derivatives are evaluated at $x \cdot e - c^* (\alpha)t - \eta\,\zeta (t)$. In the above inequality, we used the fact that $\zeta'(t)\ge0$ and $U'_{\alpha}\le0$, as well as~\eqref{ass_f},~\eqref{eqn:front_eq} and~\eqref{defnu2}. Similarly, one can also check that, if $x \cdot e \geq c^* (\alpha ) t - \eta\,\zeta(t) + z_1$ and~$\underline{U}_{\alpha}(t,x;\eta)>0$, then 
$$\baa{rcl}
0<\underline{U}_\alpha (t,x;\eta) & = & U_\alpha (x \cdot e - c^* (\alpha)\,t + \eta\,\zeta(t) ) - \eta\,e^{-\lambda_0( x \cdot e - c^* (\alpha ) t + \eta\,\zeta(+\infty))/2-\nu t}\vspace{3pt}\\
& \leq & U_\alpha (x \cdot e - c^* (\alpha)\,t + \eta\,\zeta(t) )  \leq\alpha+\eta_0\le\theta,\eaa$$
and it satisfies 
\begin{eqnarray*}
\partial_t \underline{U}_\alpha\!\!-\!\Delta \underline{U}_\alpha\!\!-\!f(\underline{U}_\alpha) &  \!\!\!\!=\!\!\!\! & \partial_t \underline{U}_\alpha - \Delta \underline{U}_\alpha \vspace{3pt} \\
& \!\!\!\!=\!\!\!\! & -[U_\alpha '' +c^* (\alpha) U_\alpha']\!+\!\eta\zeta ' (t) U_\alpha'\!+\!\eta\Big(\nu\!+\!\frac{\lambda_0^2}{4}\!-\!\frac{c^* (\alpha)\lambda_0}{2}\Big)e^{-\lambda_0( x \cdot e - c^* (\alpha ) t + \eta\,\zeta(+\infty))/2-\nu t} \vspace{3pt}\\
& \!\!\!\!\leq\!\!\!\! & 0,
\end{eqnarray*}
where $\underline{U}_\alpha(\cdot,\cdot;\eta)$ and its derivatives are evaluated at $(t,x)$, while $U_\alpha$ and its derivatives are evaluated here at $x \cdot e - c^* (\alpha)t+\eta\,\zeta (t)$.

Now consider the case where $x \cdot e \leq c^* (\alpha) t + \eta\,\zeta(t) + z_2$ and $\overline{U}_\alpha(t,x;\eta)<1$. Then, from our choice of $\eta_0$ and~$z_2$ above, we have $0<1-\eta_0\le U_{\alpha}(x \cdot e-c^* (\alpha) t-\eta\,\zeta(t))< \overline{U}_\alpha(t,x;\eta)<1$ and
\begin{eqnarray*}
\partial_t \overline{U}_\alpha\!\!-\!\Delta \overline{U}_\alpha\!-\!f(\overline{U}_\alpha) & \!\!\!\geq\!\!\! & - [ U_\alpha ''\!+\!c^* (\alpha) U_\alpha'\!+\!f( U_\alpha)]\!-\!\eta\zeta' (t)U_\alpha'\!-\!\eta \Big(\nu\!+\!\frac{\lambda_0^2}{4}\!-\!\frac{c^* (\alpha)\lambda_0}{2}\Big) e^{-\lambda_0(x\cdot e\!-\!c^* (\alpha)t)/2\!-\!\nu t} \vspace{3pt}\\
& \!\!\!\geq\!\!\! & 0.
\end{eqnarray*}
Similarly, if $x \cdot e \leq c^* (\alpha )\,t - \eta\,\zeta(t) + z_2$, then 
$$1>U_{\alpha}( x \cdot e\!-\!c^* (\alpha )t\!-\!\eta\zeta(t))> \underline{U}_\alpha(t,x;\eta) \geq U_\alpha ( x \cdot e\!-\!c^* (\alpha )t\!-\!\eta\zeta(t))\!-\!\eta e^{-\nu t} \geq 1\!-\!2\eta\ge1\!-\!2\eta_0>0.$$
In case $1<e^{-\lambda_0(x\cdot e-c^*(\alpha)t+\zeta(+\infty))/2}$, it follows that
$$\partial_t \underline{U}_\alpha - \Delta \underline{U}_\alpha - f(\underline{U}_\alpha)\leq- [ U_\alpha '' +c^* (\alpha ) U_\alpha ' + f( U_\alpha) ]  + \eta\,\zeta' (t)\, U_\alpha '   + \eta \Big( \nu + \frac{f'(1)}{2}  \Big) e^{- \nu t }\leq0,$$
while, in case $e^{-\lambda_0(x\cdot e-c^*(\alpha)t+\zeta(+\infty))/2}<1$, one has
$$\partial_t \underline{U}_\alpha\!-\!\Delta \underline{U}_\alpha\!-\!f(\underline{U}_\alpha)\leq- [ U_\alpha ''\!+\!c^* (\alpha ) U_\alpha'\!+\!f( U_\alpha) ]\!+\!\eta\zeta' (t)U_\alpha'\!+\!\eta \Big( \nu\!+ \!\frac{\lambda_0^2}{4}\!-\!\frac{c^* (\alpha) \lambda_0}{2}\!+\!\frac{f'(1)}{2}  \Big) e^{- \nu t }\leq0.$$

Lastly, for any $t \geq 0 $ and $ x \in \R^N$ such that $z_2 \leq x \cdot e - c^* (\alpha )\,t - \eta\, \zeta(t) \leq z_1$ and $\overline{U}_\alpha (t,x;\eta)<1$, one has $x \cdot e - c^* (\alpha )\,t\ge z_2$ and
\begin{eqnarray*}
\partial_t \overline{U}_\alpha\!\!-\!\Delta \overline{U}_\alpha\!\!-\!f(\overline{U}_\alpha) & \!\!\!\!\!\geq\!\!\!\!\! & - [ U_\alpha''\!+\!c^* (\alpha) U_\alpha'\!+\!f( U_\alpha)]\!-\!\eta\zeta' (t)U_\alpha'\!-\!\eta\Big( \nu\!+\!\frac{\lambda_0^2}{4}\!-\!\frac{c^* (\alpha) \lambda_0}{2}\!+\!K \Big) e^{-\lambda_0(x\cdot e\!-\!c^* (\alpha) t)/2 \!-\!\nu t } \vspace{3pt} \\
& \!\!\!\!\!\geq\!\!\!\!\! & \eta\, \zeta' (t)\,  \kappa - \eta\, K\,e^{-\lambda_0z_2/2} e^{-\nu t},
\end{eqnarray*}
which is nonnegative from our choice of the function $\zeta(t)$. As $1$ is a trivial solution of~\eqref{eqn:RD}, we have then proved that $\overline{U}_\alpha(\cdot,\cdot;\eta)$ is a (generalized) supersolution of~\eqref{eqn:RD} in $[0,+\infty)\times\R^N$.

By a similar computation one can also check that, in the domain $z_2 \leq x \cdot e - ct +\eta\,\zeta(t) \leq z_1$ with~$\underline{U}_{\alpha}(t,x;\eta)>0$, the function $\underline{U}_\alpha$ is a maximum of two subsolutions. By putting this together with the previous inequalities and the fact that $0$ is a trivial solution of~\eqref{eqn:RD}, we conclude that $\underline{U}_{\alpha}(\cdot,\cdot;\eta)$ is a (generalized) subsolution of~\eqref{eqn:RD} in $[0,+\infty)\times\R^N$. The claim is thereby proved.
\end{proof}

\subsubsection*{Sub and supersolutions $\underline{u}$ and $\overline{u}$ which are close to $v_{\xi,\lambda}(t,x)$ for large $x\cdot e$}

In the sequel, we choose $\alpha=\theta_{\xi,\lambda}\in(0,\theta)$ in Claim~\ref{claim:super_solb}. The positive constants $\nu$ and $\eta_0$ and the bounded function $\zeta$ are as in Claim~\ref{claim:super_solb}. Furthermore, we can assume without loss of generality, even if it means redefining $\nu$ in~\eqref{defnu2}, that
$$0<\nu<\frac{\mu_1}{2},$$
where $\mu_1>0$ is as in~\eqref{eqn:CV_per}. We then fix $\eta=\eta_0>0$ in the definition of the functions $\overline{U}_{\theta_{\xi,\lambda}}(\cdot,\cdot;\eta)$ and~$\underline{U}_{\theta_{\xi,\lambda}}(\cdot,\cdot;\eta)$.

Due to its oscillations around the steady state $\theta_{\xi, \lambda}$, the solution~$u$ does not lie below (respectively above) any shift of the supersolution $\overline{U}_{\theta_{\xi,\lambda}}(\cdot,\cdot;\eta)$ (respectively subsolution $\underline{U}_{\theta_{\xi,\lambda}}(\cdot,\cdot;\eta)$) defined in Claim~\ref{claim:super_solb}. We therefore introduce some functions, defined for $\tau\ge0$ and $X\in\R$ as follows:
$$\left.
\begin{array}{l}
(t,x)\mapsto\overline{w}(t,x;\tau,X) := v_{\xi,\lambda} (t+\tau,x)+ A\,e^{-\lambda_0 (x\cdot e -c^* (\theta_{\xi,\lambda})t -X) },\vspace{3pt}\\
(t,x)\mapsto\underline{w}(t,x;\tau,X) := v_{\xi,\lambda} (t+\tau,x)- A\,e^{-\lambda_0 (x\cdot e -c^* (\theta_{\xi,\lambda}) t -X) }.
\end{array} \right.$$
Since $\lambda_0\,c^*(\theta_{\xi,\lambda})-\lambda_0^2>0$, one infers that, for any $\tau \geq 0$ and $X \in \R$, the function $\overline{w}(\cdot,\cdot;\tau,X)$ is a supersolution of~\eqref{eqn:RD} as soon as it lies below~$\theta$, while~$\underline{w}(\cdot,\cdot;\tau,X)$ is a subsolution of~\eqref{eqn:RD} as soon as~$v_{\xi,\lambda} (t+ \tau , x) \leq \theta$. 

Note that unlike in the proof of Theorem~\ref{th:critical_ignition} in Section~\ref{sec:proof_critical}, the traveling front $U_{\theta_{\xi,\lambda}}(z)$ decays to~$\theta_{\xi,\lambda}$ as $z \to +\infty$ faster than the exponential function $\theta_{\xi,\lambda} + e^{-\lambda_0 z}$: recall indeed~\eqref{eqn:lambda} and~\eqref{eqn:exp_behav43}. However, in the definition of $\overline{U}_{\theta_{\xi,\lambda}}$ in Claim~\ref{claim:super_solb} above, the dominating term as $x \cdot e  \to +\infty$ is the exponential~$\eta\,e^{- \lambda_0(x\cdot e -c^* (\theta_{\xi,\lambda}) t )/2 - \nu t}$. More precisely, it is straightforward to check that, for any $\varepsilon >0$, there exists $z_\varepsilon \in \R$ such that, for all $t \geq 0$ and $x \in \R^N$:
\begin{equation}\label{eqn:111}
x \cdot e \geq   c^* (\theta_{\xi,\lambda})\,t  + \frac{\nu\,t}{c^* (\theta_{\xi,\lambda}) - \lambda_0/2} + z_\varepsilon\ \Longrightarrow \  \left| \frac{\overline{U}_{\theta_{\xi,\lambda}}(t,x;\eta) - \theta_{\xi,\lambda}}{\eta\,e^{-\lambda_0( x \cdot e - c^* (\theta_{\xi,\lambda}) t )/2 - \nu t}} - 1\right| \leq  \varepsilon.
\end{equation}
Note that the fact that $z_\varepsilon$ does not depend on $t \geq 0$ is a consequence of the boundedness of $\zeta$ in $[0,+\infty)$. Note also that $z_{\varepsilon}$ is independent of $A$.

This leads us to look at the intersection of the functions $\theta_{\xi,\lambda} +\eta\,e^{-\lambda_0( x \cdot e - c^* (\theta_{\xi,\lambda}) t  )/2 - \nu t}$ and $\theta_{\xi,\lambda} + A\,e^{-\lambda_0 (x \cdot e - c^* (\theta_{\xi,\lambda})t )}$. Clearly, for any $t \geq 0$, these functions intersect exactly along the hyperplane
$$x \cdot e =  c^* (\theta_{\xi,\lambda})\,t + \frac{2 \nu }{\lambda_0} t + \frac{2}{\lambda_0} \ln \frac{A}{\eta} .$$
We then recall~\eqref{eqn:CV_per}, namely
$$|v_{\xi,\lambda} (t,x) - \theta_{\xi,\lambda} | \leq C_1 e^{-\mu_1 t}$$
for all $t\ge0$, with $C_1>0$, $\mu_1>0$, $\theta_{\xi,\lambda}\in(0,\theta)$, and that $2\nu <\mu_1$. Therefore, there exist $\varepsilon>0$ small enough (only depending on $\lambda_0$, namely, $(1-\varepsilon)\,e^{-\lambda_0/2}>e^{-\lambda_0}$ and $(1+\varepsilon)\,e^{\lambda_0/2}<e^{\lambda_0}$) and $\tau_0>0$ large enough (depending on $\varepsilon$, $\lambda_0$, $\eta$, $A$, $\nu$, $C_1$, $\mu_1$ and $\theta_{\xi,\lambda}$) so that, for all $\tau\ge\tau_0$ and $t \geq 0$,
\begin{equation}\label{eqn:222}\left\{\baa{rcl}
\ds (1 - \varepsilon)\,\eta\, e^{-\frac{\lambda_0}{2} \big( \frac{2 \nu}{\lambda_0} t + \frac{2}{\lambda_0} \ln \frac{A}{\eta}  + 1 \big) - \nu t} & > & \ds C_1 e^{-\mu_1 (t+ \tau)} + A\,e^{-\lambda_0 \big(\frac{2 \nu}{\lambda_0} t +  \frac{2}{\lambda_0} \ln \frac{A}{\eta} +1 \big)},\vspace{3pt}\\
\ds (1 + \varepsilon)\,\eta\,e^{-\frac{\lambda_0}{2} \big( \frac{2 \nu}{\lambda_0} t +\frac{2}{\lambda_0} \ln \frac{A}{\eta}  - 1 \big) - \nu t} & < & -C_1 e^{-\mu_1(t+ \tau)} + A\,e^{-\lambda_0 \big(\frac{2 \nu}{\lambda_0} t +\frac{2}{\lambda_0} \ln \frac{A}{\eta}  - 1 \big)},\vspace{3pt}\\
\|v_{\xi,\lambda}(t+\tau,\cdot)\|_{L^{\infty}(\R^N)} & < & \displaystyle\frac{\theta_{\xi,\lambda}+\theta}{2}\eaa\right.
\end{equation}
Up to increasing the constant $A>0$, we can also assume without loss of generality that 
$$\frac{2}{\lambda_0} \ln \frac{A}{\eta }  - 1 \geq z_\varepsilon\ \hbox{ and }\ \frac{\eta^2\,e^{\lambda_0}}{A}<\frac{\theta-\theta_{\xi,\lambda}}{2}$$
(this is indeed possible since $\varepsilon$, $z_\varepsilon$, $\lambda_0$ and $\eta$ can be chosen independently of $A$). Thus, since
$$\frac{\nu}{c^* (\theta_{\xi,\lambda}) - \lambda_0/2} \leq \frac{\nu}{c^* (0) - \lambda_0/2} \leq \frac{2\nu}{\lambda_0},$$
we also have that
\begin{equation}\label{eqn:333}
c^* (\theta_{\xi,\lambda})\,t + \frac{\nu}{c^* (\theta_{\xi,\lambda}) -\lambda_0/2} t + z_\varepsilon \leq c^* (\theta_{\xi,\lambda})\, t + \frac{2\nu}{\lambda_0} t + \frac{2}{\lambda_0} \ln \frac{A}{\eta}  -1,
\end{equation}
for all $t \geq 0$. Furthermore, for all $\tau\ge\tau_0$, $X\in\R$ and $(t,x)\in[0,+\infty)\times\R^N$,
$$\baa{rcl}
\displaystyle x\cdot e-c^*(\theta_{\xi,\lambda})t\ge\frac{2\nu}{\lambda_0}t+\frac{2}{\lambda_0}\ln\frac{A}{\eta}+X-1 & \Longrightarrow & \displaystyle A\,e^{-\lambda_0(x\cdot e-c^*(\theta_{\xi,\lambda})t-X)}\le\frac{\eta^2\,e^{\lambda_0}}{A}<\frac{\theta-\theta_{\xi,\lambda}}{2}\vspace{3pt}\\
& \Longrightarrow & \overline{w}(t,x;\tau,X)<\theta.\eaa$$

From equations~\eqref{eqn:111},~\eqref{eqn:222} and~\eqref{eqn:333}, we can now conclude that, for any $\tau\ge\tau_0$, $X \in \R$ and $(t,x)\in[0,+\infty)\times\R^N$,
$$\left\{\baa{lcl}
\ds x \cdot e - c^* (\theta_{\xi,\lambda} )\, t =  \frac{2 \nu}{\lambda_0} t +  \frac{2}{\lambda_0} \ln \frac{A}{\eta} + X-1 & \Longrightarrow &  \overline{U}_{\theta_{\xi,\lambda}} (t,x -X e;\eta) < \overline{w} (t,x;\tau,X),\vspace{3pt}\\
\ds x \cdot e - c^* (\theta_{\xi,\lambda} )\, t =  \frac{2 \nu}{\lambda_0} t +  \frac{2}{\lambda_0} \ln \frac{A}{\eta} + X + 1 & \Longrightarrow & \overline{U}_{\theta_{\xi,\lambda}} (t,x -X e;\eta) > \overline{w} (t,x;\tau,X).\eaa\right.$$
Therefore, for any $\tau\ge\tau_0$ and $X\in\R$, the following function $\overline{u}(\cdot,\cdot;\tau,X)$ defines a (generalized) supersolution of~\eqref{eqn:RD} in $[0,+\infty)\times\R^N$:
$$\overline{u} (t,x;\tau,X) =\left\{
\begin{array}{l}
\ds\overline{U}_{\theta_{\xi,\lambda}} (t,x-X e;\eta) \ \mbox{ if } \ x\cdot e  - c^* (\theta_{\xi,\lambda} )\,t \leq \frac{2 \nu}{\lambda_0} t +  \frac{2}{\lambda_0} \ln \frac{A}{\eta}    + X- 1, \vspace{3pt}\\
\ds\min \{ \overline{U}_{\theta_{\xi,\lambda}} (t,x-Xe;\eta), \overline{w} (t,x;\tau,X) \} \vspace{3pt}\\
\ds\qquad\qquad\qquad\qquad\,\mbox{ if } \ \frac{2 \nu}{\lambda_0} t\!+\!\frac{2}{\lambda_0} \ln \frac{A}{\eta}\!+\!X\!-\!1\leq x \cdot e\!-\!c^*(\theta_{\xi,\lambda} )t \leq \frac{2 \nu}{\lambda_0} t\!+\!\frac{2}{\lambda_0} \ln \frac{A}{\eta}\!+\!X\!+\!1,\vspace{3pt}\\
\ds\overline{w} (t,x;\tau,X) \qquad\ \ \ \mbox{ if } \ x\cdot e - c^* (\theta_{\xi,\lambda} )\,t  \geq \frac{2 \nu}{\lambda_0} t +  \frac{2}{\lambda_0} \ln \frac{A}{\eta}   + X + 1 .
\end{array}
\right.$$

By a similar argument, one can check that, even if it means increasing $A>0$ and $\tau_0>0$, the following function $\underline{u}(\cdot,\cdot;\tau,X)$ is a (generalized) subsolution of~\eqref{eqn:RD} in $[0,+\infty)\times\R^N$:
$$\underline{u} (t,x;\tau,X) =\left\{
\begin{array}{l}
\ds\!\!\underline{U}_{\theta_{\xi,\lambda}} (t,x-X e;\eta) \ \ \mbox{ if } \ x\cdot e  - c^* (\theta_{\xi,\lambda} )\,t \leq \frac{2 \nu}{\lambda_0} t +  \frac{2}{\lambda_0} \ln \frac{A}{\eta} + \eta\,\zeta(+\infty)   + X- 1, \vspace{3pt}\\
\ds\!\!\max \{ \underline{U}_{\theta_{\xi,\lambda}} (t,x-Xe;\eta), \underline{w} (t,x;\tau,X) \} \vspace{3pt}\\
\ds\mbox{ if }\frac{2 \nu}{\lambda_0} t\!+\!\frac{2}{\lambda_0} \ln \frac{A}{\eta}\!+\!\eta\,\zeta(+\infty)\!+\!X\!-\!1\!\leq\!x \cdot e\!-\!c^*(\theta_{\xi,\lambda})t\!\leq\!\frac{2 \nu}{\lambda_0}t\!+\!\frac{2}{\lambda_0} \ln \frac{A}{\eta}\!+\!\eta\,\zeta(+\infty)\!+\!X\!+\!1,\vspace{3pt}\\
\ds\!\!\underline{w} (t,x;\tau,X) \qquad\ \ \ \ \mbox{ if } \ x\cdot e - c^* (\theta_{\xi,\lambda} )\,t  \geq \frac{2 \nu}{\lambda_0} t +  \frac{2}{\lambda_0} \ln \frac{A}{\eta} + \eta\,\zeta(+\infty)    + X + 1,
\end{array}
\right.$$
for any $\tau\ge\tau_0$ and $X\in\R$.


\subsection{Concluding the proof of Theorem~\ref{th:CV_profile}}

We are now in a position to complete the proof of Theorem~\ref{th:CV_profile}. Let us note that, in the construction of the super and subsolutions~$\overline{u}(\cdot,\cdot;\tau,X)$ and $\underline{u}(\cdot,\cdot;\tau,X)$ in the previous subsection, we are allowed to choose any large enough~$\tau$, and any real number~$X$.

\smallbreak
{\it Step 1:comparison between $u$, $\overline{u}$ and $\underline{u}$.} We will look here for some large $\tau$ and some shifts $X_-$ and $X_+$ so that 
\begin{equation}\label{eqn:last_claim}
\underline{u} (0,\cdot ;\tau, X_-) \leq u (\tau, \cdot) \leq \overline{u} (0,\cdot ;\tau,X_+).
\end{equation}

First, one can prove proceeding as in Section~\ref{sec:proof_critical} (see the proof of~\eqref{eqn:exp_decay}) that
\begin{equation}\label{eqn:repeat}
|u(t,x) - v_{\xi,\lambda} (t,x) | \leq A\,e^{-\lambda_0 (x \cdot e - \overline{c} t)}
\end{equation}
for all $t \geq 0$ and $x \in \R^N$, where $\overline{c}$ is a positive constant. In particular,
$$u(\tau,x ) \leq \min \{ 1  , \, v_{\xi,\lambda} (\tau,x) + A\,e^{-\lambda_0 ( x \cdot e - \overline{c}\tau)} \}$$
for all $x\in\R^N$. Similarly as in Step~3 of the proof of Theorem~\ref{th:critical_ignition}, it follows that there exist $\tau_1\ge\tau_0$ and a map $X_+:[\tau_1,+\infty)\to\R$ such that, for any $\tau\ge\tau_1$ and $X\ge X_+(\tau)$, one has
$$u (\tau,x) \leq \overline{U}_{\theta_{\xi,\lambda}} (0,x -Xe;\eta)$$
for all $x \cdot e \leq (2/\lambda_0) \ln(A/\eta)+ X+1$. Up to increasing $X_+(\tau)$, it also follows from~\eqref{eqn:repeat} that~$u(\tau,\cdot) \leq \overline{w} (0,\cdot;\tau,X)$ in $\R^N$ for any $\tau\ge\tau_1$ and $X\ge X_+(\tau)$. Hence,
\begin{equation}\label{eqn:fin_a}
u (\tau,\cdot) \leq \overline{u} (0,\cdot;\tau,X)\ \hbox{ in }\R^N
\end{equation}
for any $\tau\ge\tau_1$ and $X\ge X_+(\tau)$.

Let us now establish a lower bound of $u(\tau,\cdot)$ for $\tau$ large enough. Recalling from Assumption~\ref{ass:front_like} that $\liminf_{x\cdot e \to - \infty} u_0 (x) > \theta$, and using for instance the classical results from~\cite{AW}, we know that the solution spreads in the direction~$e$ at least with speed~$c^* (0) $. In particular, since $c^* (0) > \lambda_0$ (see~\eqref{eqn:lambda} above), we get that, for any $Z \in \R$,
\begin{equation}\label{eqn:spread_a}
\lim_{t\to +\infty}\ \inf_{x \cdot e \leq \lambda_0 t + Z} u(t,x) = 1.
\end{equation}
On the other hand, the uniform convergence of $v_{\xi,\lambda}(t,\cdot)$ to~$\theta_{\xi,\lambda} < \theta$ yields the existence of a real number~$T>0$ such that, for all $t \geq T$ and $x \in \R^N$,
$$v_{\xi,\lambda} (t,x) - A\,e^{-\lambda_0 (x\cdot e - \overline{c} T - \lambda_0 (t -T))} \leq \theta .
$$
Using the fact that $f=0$ in~$[0,\theta]$, one infers that the function
$$(t,x)\mapsto\max\big\{ 0 , v_{\xi,\lambda} (t,x) - A\,e^{-\lambda_0 (x \cdot e - \overline{c} T - \lambda_0 (t-T))} \big\}$$
is a subsolution of~\eqref{eqn:RD} in $[T,+\infty)\times\R^N$. Using again~\eqref{eqn:repeat} and the comparison principle, we conclude that for all $t \geq T$ and $x\in\R^N$,
$$u(t,x) \geq  \max\big\{ 0 , v_{\xi,\lambda} (t,x)  - A\,e^{-\lambda_0 (x \cdot e - \overline{c} T - \lambda_0 (t-T))} \big\}.$$
Thus, for all $\tau \geq T$ and $x \in \R^N$,
\begin{equation}\label{eqn:fin_sub1}
u (\tau, x ) \geq \underline{w} (0,x;\tau, \overline{c}T + \lambda_0 (\tau - T)) .
\end{equation}
Let us now note by definition of $\underline{U}_{\theta_{\xi,\lambda}}(\cdot,\cdot;\eta)$ in Claim~\ref{claim:super_solb} that
$$\sup_{x \cdot e \leq (2/\lambda_0) \ln(A/\eta) + \eta\,\zeta (+\infty) +1} \underline{U}_{\theta_{\xi,\lambda}} (0,x;\eta) < 1.$$
Using~\eqref{eqn:spread_a} and letting
$$Z = \frac{2}{\lambda_0} \ln\frac{A}{\eta} + \eta\,\zeta (+\infty) +1 + \overline{c} T - \lambda_0 T,$$
there exists $\tau_2\geq \max(T,\tau_1)$ such that, for any $\tau\ge\tau_2$,
\begin{equation}\label{eqn:fin_sub2}
\inf_{x \cdot e \leq \lambda_0 \tau+  Z} u(\tau,x) \geq \sup_{ x\cdot e \leq  \lambda_0 \tau+  Z } \ \underline{U}_{\theta_{\xi,\lambda}} (0,x - (\lambda_0 \tau + \overline{c} T - \lambda_0 T) e;\eta).
\end{equation}
Putting together~\eqref{eqn:fin_sub1} and~\eqref{eqn:fin_sub2}, we conclude that, for any $\tau\ge\tau_2$,
\begin{equation}\label{eqn:fin_b}
u (\tau,\cdot) \geq \underline{u} (0,\cdot;\tau,X_-)\ \hbox{ in }\R^N
\end{equation}
with $X_- = \lambda_0 \tau + \overline{c} T - \lambda_0 T$.

Combining~\eqref{eqn:fin_a} and~\eqref{eqn:fin_b}, we obtain that the claim~\eqref{eqn:last_claim} holds true for some positive real numbers $\tau>0$ and $X_{\pm}$, which are fixed in the sequel. By the comparison principle, we then get that
\begin{equation}\label{eqn:last_claim_t}
\underline{u} (t-\tau,x;\tau,X_-) \leq u (t,x) \leq \overline{u} (t-\tau,x;\tau,X_+),
\end{equation}
for all $t \geq \tau$ and $x\in \R^N$.

\smallbreak
{\it Step 2: definition of a shift function $m$ and convergence to the planar traveling front.} Define $m(t,y)$ for any $t$ large enough and $y$ orthogonal to~$e$ (that is, $y\in(\R e)^{\perp}$), such that
$$c^* (\theta_{\xi,\lambda})\,t + m(t,y) = \inf  \Big\{ z \in \R \, | \ u(t, z e + y ) \leq \frac{1+\theta}{2} \Big\}.$$
Note that $m$ is a well-defined real valued function in $[t_0,+\infty)\times(\R e)^{\perp}$ (for some $t_0>0$) from~\eqref{eqn:repeat}, $v_{\xi,\lambda}(+\infty,\cdot)=\theta_{\xi,\lambda}<\theta$ and from the spreading property~\eqref{eqn:spread_a} of the solution~$u$. Furthermore, even if it means increasing $t_0>0$, it follows from~\eqref{eqn:last_claim_t} and the definitions of $\overline{u}$ and $\underline{u}$ that the function~$m$ is bounded in $[t_0,+\infty)\times(\R e)^{\perp}$. Let us also extend $m$ by $m=0$ in $[0,t_0)\times(\R e)^{\perp}$ (thus, $m$ is a bounded function in $[0,+\infty)\times(\R e)^{\perp}$).

Let us now consider the family of functions
$$[-s,+\infty)\times\R^N\ni(t,x) \mapsto V_{s,y}(t,x)=u(t+s,x + c^* (\theta_{\xi,\lambda})\,(t+s)\,e+m(s,y)\,e+y),$$
parametrized by $(s,y)\in[0,+\infty)\times(\mathbb{R} e)^\perp$. By standard parabolic estimates, this family $(V_{s,y})$ is relatively compact with respect to the locally uniform topology in $\R\times\R^N$ as $s\to+\infty$. More precisely, there even holds that, for any compact set $K\subset\R\times\R^N$ and any $\beta\in(0,1)$, there is $s_{K,\beta}>0$ such that~$\sup_{(s,y)\in[s_{K,\beta},+\infty)\times(\R e)^{\perp}}\|V_{s,y}\|_{C^{1+\beta/2,2+\beta}_{t,x}(K)}<+\infty$. Let us now prove that these functions $V_{s,y}$ converge to a traveling front as $s \to +\infty$, uniformly with respect to $y \in (\mathbb{R} e)^\perp$. To do so, it is enough to prove that, for any sequence~$(s_k)_{k\in\N}\to +\infty$ and any sequence $(y_k)_{k\in\N}$ in $(\mathbb{R} e)^\perp$, the functions~$V_{s_k,y_k}$ converge to the same traveling front. Consider any such sequences $(s_k)_{k\in\N}$ and $(y_k)_{k\in\N}$. Up to extraction of a subsequence, the functions $V_{s_k,y_k}$ converge in $C^{1,2}_{t,x;loc}(\R\times\R^N)$ to a solution $u_{\infty}$ of~\eqref{eqn:RD} defined in~$\R\times\R^N$. Furthermore, by construction, it is immediate to see that
$$\overline{u} (t,x+ c^* (\theta_{\xi,\lambda})\,t\,e;\tau,X_+)  \to U_{\theta_{\xi,\lambda}} (x \cdot e - X_+ -  \eta\,\zeta(+\infty) ),$$
$$\underline{u} (t,x+ c^* (\theta_{\xi,\lambda})\,t\,e;\tau,X_-)  \to U_{\theta_{\xi,\lambda}} (x \cdot e - X_- +  \eta\,\zeta(+\infty) ),$$
as $t\to+\infty$, where both convergences are uniform with respect to~$x \in \R^N$. Thus, using again~\eqref{eqn:last_claim_t}, we get that
$$U_{\theta_{\xi,\lambda}} (x\cdot e - A_1) \leq u_\infty (t,x) \leq U_{\theta_{\xi,\lambda}} (x \cdot e - A_2)\ \hbox{ for all }(t,x)\in\R\times\R^N,$$
for some real numbers $A_1 \leq  A_2$. In particular, $u_\infty$ is a generalized transition front, an extension of the usual notion of traveling fronts that was introduced by Berestycki and Hamel in~\cite{BH1,BH2}. Roughly speaking, this notion refers to any front-like entire solution such that the width of the front remains uniformly bounded with respect to time. Among other results, it was proved in~\cite{BH2} that, in the homogeneous ignition framework, there are in fact no other generalized transition fronts than the usual traveling fronts. Therefore, applying Theorem~1.14 in~\cite{BH2}, we immediately conclude that~$u_\infty$ is identically equal, up to some shift, to the traveling front profile $U_{\theta_{\xi,\lambda}}(x\cdot e)$, that is, there is $\varsigma\in\R$ such that $u_{\infty}(t,x)=U_{\theta_{\xi,\lambda}}(x\cdot e+\varsigma)$ for all $(t,x)\in\R\times\R^N$. Moreover, we have by construction that~$V_{s_k,y_k}(0,0)=u(s_k,c^*(\theta_{\xi,\lambda})\,s_k\,e+m(s_k,y_k)e+y_k)=(1+\theta)/2$ for $k$ large enough (such that~$s_k\ge t_0$). Hence, $u_\infty (0,0) = (1+\theta)/2$ and the shift $\varsigma$ does not depend on the choice of the subsequence. Without loss of generality, even if it means shifting $U_{\theta_{\xi,\lambda}}$ independently of $(s_k,y_k)_{k\in\N}$, we conclude that
$$u_\infty (t, x) \equiv U_{\theta_{\xi,\lambda}} (x \cdot e )$$
and that the whole family $(V_{s,y})_{(s,y)\in[0,+\infty)\times(\R e)^{\perp}}$ converges to $u_{\infty}$ in $C^{1,2}_{t,x;loc}(\R\times\R^N)$ as $s\to+\infty$ uniformly with respect to $y\in(\R e)^{\perp}$.

In particular,
$$u(s,x+c^*(\theta_{\xi,\lambda})\,s\,e+m(s,x-(x\cdot e)e)\,e)=V_{s,x-(x\cdot e)e}(0,(x\cdot e)\,e)\to U_{\theta_{\xi,\lambda}}(x\cdot e)$$
as $s\to+\infty$, locally uniformly with respect to $x\cdot e\in\R$. Furthermore, from the limits of $U_{\theta_{\xi,\lambda}}$ at~$\pm\infty$ and from the inequalities~\eqref{eqn:last_claim_t}, it is straightforward to see that the convergence also holds for large~$|x\cdot e|$ and thus is uniform with respect to $x$ in $\R^N$. As a consequence, the limit~\eqref{convfront} follows.

Lastly, it also follows from the properties of $u_{\infty}$ that
$$u(s,x+c^*(\theta_{\xi,\lambda})\,s\,e+m(s,x-(x\cdot e)e)\,e+z)=V_{s,x-(x\cdot e)e}(0,(x\cdot e)\,e+z)\to U_{\theta_{\xi,\lambda}}(x\cdot e+z\cdot e)$$
as $s\to+\infty$, locally uniformly with respect to $x\cdot e\in\R$ and $z\in\R^N$. Therefore,
$$\baa{l}
u(s,c^*(\theta_{\xi,\lambda})\,s\,e+m(s,0)\,e)-U_{\theta_{\xi,\lambda}}(m(s,0)-m(s,y))\vspace{3pt}\\
\qquad\qquad=u(s,y+c^*(\theta_{\xi,\lambda})\,s\,e+m(s,y)\,e+(m(s,0)-m(s,y))\,e-y)-U_{\theta_{\xi,\lambda}}(m(s,0)-m(s,y))\to0\eaa$$
as $s\to+\infty$, locally uniformly with respect to $y\in(\R e)^{\perp}$. But $u(s,c^*(\theta_{\xi,\lambda})\,s\,e+m(s,0)\,e)\to U_{\theta_{\xi,\lambda}}(0)$ as $s\to+\infty$. Since $U_{\theta_{\xi,\lambda}}$ is decreasing, one concludes that $m(s,y)-m(s,0)\to0$ as $s\to+\infty$ locally uniformly with respect to $y\in(\R e)^{\perp}$. The proof of Theorem~\ref{th:CV_profile} is thereby complete.


\end{document}